\documentclass[english,final,pdftex,reqno]{article}
 \pdfoutput=1
\usepackage{amsmath,amsthm,amssymb}
\usepackage{mathtools}
\usepackage{tikz}
\usepackage{bbm}
\usepackage{enumitem}
\usepackage[total={420pt,680pt},centering,includeheadfoot]{geometry}
\usepackage[plainpages=false,hypertexnames=false,colorlinks=true]{hyperref}

\usepackage[capitalize,nameinlink]{cleveref}

\crefformat{section}{#2\S#1#3}
\crefformat{subsection}{#2\S#1#3}

\Crefname{figure}{Figure}{Figures}

\crefformat{equation}{\textup{#2(#1)#3}}
\crefrangeformat{equation}{\textup{#3(#1)#4--#5(#2)#6}}
\crefmultiformat{equation}{\textup{#2(#1)#3}}{ and \textup{#2(#1)#3}}
{, \textup{#2(#1)#3}}{, and \textup{#2(#1)#3}}
\crefrangemultiformat{equation}{\textup{#3(#1)#4--#5(#2)#6}}%
{ and \textup{#3(#1)#4--#5(#2)#6}}{, \textup{#3(#1)#4--#5(#2)#6}}{, and \textup{#3(#1)#4--#5(#2)#6}}

\Crefformat{equation}{#2Equation~\textup{(#1)}#3}
\Crefrangeformat{equation}{Equations~\textup{#3(#1)#4--#5(#2)#6}}
\Crefmultiformat{equation}{Equations~\textup{#2(#1)#3}}{ and \textup{#2(#1)#3}}
{, \textup{#2(#1)#3}}{, and \textup{#2(#1)#3}}
\Crefrangemultiformat{equation}{Equations~\textup{#3(#1)#4--#5(#2)#6}}%
{ and \textup{#3(#1)#4--#5(#2)#6}}{, \textup{#3(#1)#4--#5(#2)#6}}{, and \textup{#3(#1)#4--#5(#2)#6}}

\crefdefaultlabelformat{#2\textup{#1}#3}

\newcommand{\NN}{\mathbb{N}}

\newcommand{\RR}{\mathbb{R}}
\newcommand{\CC}{\mathbb{C}}

\newcommand{\KK}{\mathbb{K}}

\DeclareMathOperator{\RE}{\textup{Re}}

\DeclareMathOperator{\DOM}{\mathcal{D}}

\newcommand{\EPS}{\ensuremath{\varepsilon}}
\newcommand{\BAR}[1]{\ensuremath{\overline{#1}}}



\newcommand{\PAIR}[3][]{\ensuremath{\langle #2,#3 \rangle_{#1}}}

\newcommand{\WSTAR}{\ensuremath{\text{weak}^{\ast}}}
\newcommand{\WSTARLY}{\ensuremath{\text{weakly}^{\ast}}}

\newcommand{\WLIM}{\textup{w$^{\ast}$-}\lim}


\newcommand{\LP}[1]{\ensuremath{L^{#1}}}

\newcommand{\BND}{\ensuremath{\mathcal{L}}}

\newcommand{\BV}{\ensuremath{\mbox{BV}}}
\newcommand{\NBV}{\ensuremath{\text{NBV}}}


\newcommand{\C}[1]{\ensuremath{\mathcal{C}_{#1}}}
\newcommand{\STAR}[1]{\ensuremath{#1^{\ast}}}
\newcommand{\SUN}[1]{\ensuremath{#1^{\odot}}}
\newcommand{\SUNSTAR}[1]{\ensuremath{#1^{\odot\star}}}
\newcommand{\SUNSUN}[1]{\ensuremath{#1^{\odot\odot}}}
\newcommand{\STARSTAR}[1]{\ensuremath{#1^{\ast\ast}}}








\newcommand{\DEF}{\ensuremath{\coloneqq}}

\newcommand{\QEDEX}{\ensuremath{\lozenge}}

\theoremstyle{plain}
\newtheorem{theorem}{Theorem}[]
\newtheorem*{theorem*}{Theorem}
\newtheorem{corollary}[theorem]{Corollary}
\newtheorem{lemma}[theorem]{Lemma}
\newtheorem{proposition}[theorem]{Proposition}
\theoremstyle{remark}
\newtheorem{remark}[theorem]{Remark}
\theoremstyle{definition}
\newtheorem{definition}[theorem]{Definition}

%
\newenvironment{steps}
{\begin{enumerate}[wide,labelindent=0pt,itemsep=0.5em,label={{\it \arabic*.}}]}
{\end{enumerate}}

\title{A class of abstract delay differential equations\\ in the light of suns and stars\thanks{2010 Mathematics Subject Classification: Primary 34K30; Secondary 47D06.}}

\author{Sebastiaan G. Janssens\thanks{Department of Mathematics, Utrecht University, Budapestlaan 6, 3508 TA Utrecht, The Netherlands\newline e-mail: \href{mailto:s.g.janssens@uu.nl}{s.g.janssens@uu.nl} or \href{mailto:sj@dydx.nl}{sj@dydx.nl}}}



\renewcommand{\phi}{\varphi}

\begin{document}

\maketitle

\begin{abstract}
  Using dual perturbation theory in a non-sun-reflexive context, we establish a correspondence between 1. a class of nonlinear abstract delay differential equations (DDEs) with unbounded linear part and an unknown taking values in an arbitrary Banach space and 2. a class of abstract {\WSTAR} integral equations of convolution type involving the sun-star adjoint of a translation-like strongly continuous semigroup. For this purpose we also characterize the sun dual of the underlying state space. More generally we consider bounded linear perturbations of an arbitrary strongly continuous semigroup and we comment on some implications for the particular case of abstract DDEs.
  \vskip 1ex
  \emph{Keywords:}  delay equation, abstract integral equation, dual perturbation theory, adjoint semigroup, sun-star calculus, non-sun-reflexive, vector-valued integration.
\end{abstract}

\section{Introduction}\label{sec:adde:intro}
We are concerned with the initial value problem
\begin{subequations}
  \label{eq:adde:1}
  \begin{alignat}{2}
    \dot{x}(t) &= B x(t) + F(x_t), &\qquad &t \ge 0,\label{eq:adde:dde}\\
    x(\theta) &= \phi(\theta), &&\theta \in [-h,0],\label{eq:adde:ic}
  \end{alignat}
\end{subequations}
for the \emph{abstract} delay differential equation (DDE) \cref{eq:adde:dde} with initial condition \cref{eq:adde:ic} and $0 < h < \infty$. The unknown $x$ takes values in a real or complex Banach space $Y$ and $B : \DOM(B) \subseteq Y \to Y$ is the generator of a \C{0}-semigroup $S$ of bounded linear operators on $Y$. It is \emph{not} assumed that $S$ is analytic, nor (immediately or eventually) compact. As state space we choose the Banach space $X \DEF C([-h,0];Y)$  of continuous $Y$-valued functions on the interval $[-h,0]$, endowed with the supremum norm. The {\bf history} $x_t \in X$ at time $t \ge 0$ is defined as
\[
  x_t(\theta) \DEF x(t + \theta), \quad \forall\,\theta \in [-h,0],
\]
so $x_t$ is the translation to $[-h,0]$ of the restriction of $x$ to $[t - h,t]$. Finally, the nonlinear operator $F : X \to Y$ is continuous. 

If $Y$ is finite-dimensional then \cref{eq:adde:dde} reduces to what will be called a \emph{classical} DDE. For this case a rather complete dynamical theory based on perturbation theory for adjoint semigroups \cite{Clement1987, Clement1988, Clement1989, Clement1989b, Diekmann1991} is available in \cite{Diekmann1995}. (The theory is also known in the literature as \emph{dual perturbation theory} or \emph{sun-star calculus}.) It was recently understood that various more general and seemingly different classes of delay equations can be formulated and analyzed within the same functional analytic framework, largely independently of the details of a particular class. It is only in the choice of the underlying state space that these details matter. Therefore, dual perturbation theory can play the useful role of a unifying device.

For example, in \cite{Diekmann2007} purely functional equations - also called renewal equations (REs) - as well as coupled RE-DDE systems are investigated for the case that the delay and the dimension of $Y$ are both finite. In \cite{Diekmann2012} the analysis is extended to include infinite delay, while in \cite{Diekmann2008} abstract REs with infinite delay and a possibly infinite-dimensional Banach space $Y$ are considered. A lot of this work was motivated by the author's interest in models of structured biological populations, but their results have a general validity. In this article we will show that \eqref{eq:adde:1}, too, fits naturally into the framework of dual perturbation theory.

On the other hand, there already exists a substantial literature on \cref{eq:adde:1} for the case that $Y$ is infinite-dimensional, driven in part - but not exclusively - by examples where $B$ generates a diffusion semigroup. We mention \cite{Travis1974}, \cite{Wu1996}, \cite{Faria2002} and \cite{Faria2006}, and also \cite{Arino2006} for the bounded linear case. These works build upon the formal duality approach towards classical DDEs  \cite{Hale1971, Hale1977, Hale1993}. For systematic development of the linear theory in state spaces of continuous functions or integrable functions, respectively, we also mention \cite[\S VI.6]{Engel2000} and \cite{Batkai2005}.  Furthermore, there are classes of delay equations, such as neutral DDEs, for which recent approaches inspired by - but well beyond the realm of - dual perturbation theory seem promising \cite{Diekmann2019}.

The original motivation of the present article can be found in \cite{VanGils2013} where it appears as a reference. Indeed, part of the work discussed here had been completed around the time of publication of \cite{VanGils2013}, but unfortunately it was not finished until now. Some results were previously presented in Twente in June 2014 as well as in Leiden in April 2013 during the workshop \emph{Mathematics and Biology: a Roundtrip in the Light of Suns and Stars}.

\section{Structure and outline}
The primary purpose of this article is to show that \cref{eq:adde:1} fits naturally in the framework of dual perturbation theory, also when $B$ is unbounded. We establish a relationship between \cref{eq:adde:1} and an abstract integral equation of the form
  \begin{equation}
    \label{eq:adde:2}
    u(t) = T_0(t)\phi + j^{-1}\int_0^t{\SUNSTAR{T}_0(t - \tau)G(u(\tau))\,d\tau}, \qquad \forall\,t \ge 0,
  \end{equation}
  where $T_0$ is a \C{0}-semigroup on $X$, $\STAR{T_0}$ is the adjoint semigroup on the dual space $\STAR{X}$ and $\SUN{T_0}$ is the restriction of $\STAR{T_0}$ to the invariant maximal subspace $\SUN{X}$ of strong continuity of $\STAR{T_0}$. The operator $G : X \to \SUNSTAR{X}$ is continuous and the convolution integral is of {\WSTAR} Riemann type with values in $\SUNSUN{X}$. The canonical embedding $j : X \to \SUNSTAR{X}$ defined by
  \begin{equation}
    \label{eq:adde:4}
    \PAIR{\SUN{x}}{jx} \DEF \PAIR{x}{\SUN{x}}, \qquad \forall\,x \in X,\,\SUN{x} \in \SUN{X},
  \end{equation}
  actually takes values in $\SUNSUN{X}$. If $j$ maps \emph{onto} $\SUNSUN{X}$ then $X$ is said to be {\bf sun-reflexive} with respect to $T_0$. In that case the application of $j^{-1}$ in \cref{eq:adde:2} is automatically warranted. Part of the task of relating \cref{eq:adde:1,eq:adde:2} is in showing that, although $X$ will in general \emph{not} be sun-reflexive with respect to $T_0$, for abstract DDEs the corresponding convolution integral in \cref{eq:adde:2} nevertheless takes values in the range of $j$.

As a starting point of the perturbative approach to \cref{eq:adde:1} we consider the trivial initial value problem
\begin{equation}
  \label{eq:duality:1}
  \left\{
    \begin{aligned}
      \dot{x}(t) &= Bx(t), &&t \ge 0,\\
      x(\theta) &= \phi(\theta), &&\theta \in [-h,0],
    \end{aligned}
  \right.
\end{equation}
which is \cref{eq:adde:1} with $F = 0$. Clearly the solution $x : [-h, \infty) \to Y$ of \cref{eq:duality:1} is obtained from the unique mild solution \cite[Definition II.6.3]{Engel2000} of the abstract Cauchy problem associated with $B$ and $\phi(0)$, so
\[
  x_0 = \phi, \qquad x(t) = S(t)\phi(0), \qquad \forall\,t \ge 0.
\]
This solution defined the strongly continuous \textbf{shift semigroup} $T_0$ on $X$ as
\begin{equation}
  \label{eq:adde:3}
  (T_0(t)\phi)(\theta) \DEF x(t + \theta) = 
  \begin{cases}
    \phi(t+\theta), &-h \le t + \theta \le 0,\\
    S(t + \theta)\phi(0), &\hphantom{-} 0 \le t + \theta,
  \end{cases}
  \quad \forall\,\phi \in X,\, t \ge 0,\, \theta \in [-h,0].
\end{equation}

In \cref{sec:duality:sun_dual} we obtain a representation for the sun dual $\SUN{X}$ of $X$ with respect to the shift semigroup $T_0$. This representation is included in \cref{fig:duality:1}. In the process we also find representations for the duality pairing between $X$ and $\SUN{X}$ and for the semigroup $\SUN{T_0}$ acting on $\SUN{X}$. For various ideas of proofs, this section is indebted to the earlier work \cite{Greiner1992}.

In \cref{sec:duality:range} we are concerned with the convolution integral appearing in \cref{eq:adde:2}. In certain contexts, the natural choice for $Y$ may be a Banach space that is non-reflexive, see for instance \cite{VanGils2013}. In such cases $X$ is not sun-reflexive with respect to the shift semigroup $T_0$ and one has to verify explicitly that the convolution integral takes values in the range of $j$.

In \cref{sec:adde:adde_as_aie} we then combine the results from the foregoing sections to prove that, for a suitably chosen operator $G$, there is a one-to-one correspondence between solutions of \cref{eq:adde:1} and \cref{eq:adde:2}. As a consequence of this correspondence, the existence and uniqueness of solutions of \cref{eq:adde:1} is a rather straightforward matter.

It then becomes very natural and tempting to conjecture (or even: claim \cite{Diekmann2008}) that basic results from dynamical systems theory such as the principle of linearized stability, theorems on local invariant manifolds \cite{Diekmann1991,Diekmann1995} and theorems on local bifurcations \cite{VanGils1984,Diekmann1995,Janssens2010} hold as well for the particular class \cref{eq:adde:1} of abstract DDEs. Still, strictly speaking these theorems were formulated under the running assumption of sun-reflexivity. It is therefore of some importance to check at a general level if and how they change - and if weaker conditions may need to be imposed - when this assumption no longer holds. In \cref{sec:sunstar:linear} this work is started by considering the simple case of bounded linear perturbations of a general \C{0}-semigroup, with the shift semigroup as a motivating example.

In \cref{sec:sunstar:linadde} we return to abstract DDEs in order to make the connection with the general results from the previous section. We also comment on the ranges of the linear and nonlinear perturbations of the shift semigroup $T_0$ corresponding to linear and nonlinear abstract DDEs. Unlike in the classical case, these perturbations are not of finite rank, but they still turn out to be rather simple. In particular, their ranges are contained in a known closed subspace of $\SUNSTAR{X}$.

In \cref{sec:integration} we collect and prove some facts about vector-valued functions of bounded variation and bilinear Riemann-Stieltjes integration. Probably all of them are well-known, but it is convenient to have them available in one place. They are used repeatedly in the main text, mostly in \cref{sec:duality:sun_dual,sec:duality:range,sec:adde:adde_as_aie}.

We would like to mention that an introduction to dual perturbation theory that is sufficient for this article can be found in \cite[Appendix II.3]{Diekmann1995}, while \cite{VanNeerven1992} offers a more profound treatment. Throughout $\KK$ stands for the field of real or complex numbers and we abbreviate $\RR_+ \DEF [0,\infty)$ and $\RR_- \DEF (-\infty,0]$. Functionals act from right to left, so if $W$ is a Banach space with dual space $\STAR{W}$ then
\[
\PAIR{w}{\STAR{w}} \DEF \STAR{w}(w), \qquad \forall\,w \in W,\, \STAR{w} \in \STAR{W}.
\]
If $V$ and $W$ are Banach spaces, then $\BND(V,W)$ is the Banach space of all bounded linear operators from $V$ to $W$, equipped with the operator norm.


\section{Characterization of the sun dual}\label{sec:duality:sun_dual}
The dual space $\STAR{X}$ admits a characterization in terms of familiar functions. (This is in contrast with the situation for purely functional equations \cite[\S 2]{Diekmann2008}.) Namely, by \cref{def:integration:3,thm:integration:2} we can represent $\STAR{X}$ by the Banach space $\NBV([0,h],\STAR{Y})$ consisting of $\STAR{Y}$-valued functions of bounded variation on $[0,h]$.
\begin{figure}[ht]
  \centering
  \begin{tikzpicture}[baseline=(current bounding box.center)]
    \draw [thick, ->] (0.2,2.5) -- (4.8,2.5);
    \node [left] at (0,2.5) {$C([-h,0],Y) = X$};
    \node [right] at (5,2.5) {$\STAR{X} \simeq \NBV([0,h],\STAR{Y})$};
    %
    \draw [thick, <-] (0.2,-2.5) -- (4.8,-2.5);
    \node [right] at (5,-2.5) {$\SUN{X} \simeq \SUN{Y} \times \LP{1}([0,h],\STAR{Y})$};
    \node [left] at (0,-2.5) {$\SUNSTAR{Y} \times \STAR{[\LP{1}([0,h],\STAR{Y})]} \simeq \SUNSTAR{X}$};
    %
    \draw [thick, ->] (5.3,2.0) -- (5.3,-2.0);
    %
    \draw [thick, ->] (-0.3,-2.0) -- (-0.3,-0.5);
    \node [right] at (-0.6,0) {$\SUNSUN{X}$};
    \draw [thick, <-] (-0.3,0.5) -- (-0.3,2.0);
    \node [left] at (-0.3,1.25) {$j$};
  \end{tikzpicture}    
  \caption{The sun-star duality structure. The symbol $\simeq$ indicates an isometric isomorphism.}
  \label{fig:duality:1}
\end{figure}
Let $\LP{1}([0,h],\STAR{Y})$ be the Banach space of all Bochner integrable $\STAR{Y}$-valued functions on $[0,h]$. On this space we define the nilpotent \C{0}-semigroup $T_1$ by translation to the left with extension by zero,
\begin{displaymath}
  (T_1(t)g)(\theta) =
  \begin{cases}
    g(t + \theta), & 0 \le t + \theta \le h,\\
    0,             & h < t + \theta,
  \end{cases}
  \qquad \forall\,g \in \LP{1}([0,h],\STAR{Y}),\, t \ge 0,\, \theta \in [0,h].
\end{displaymath}
We recall from \cref{def:integration:4} that $\chi_0$ denotes the characteristic function on $(0,h]$. For real numbers $p$ and $q$ we let $p \wedge q \DEF \min\{p,q\}$ and $p \vee q \DEF \max\{p,q\}$. We are now ready to formulate the main result of this subsection.
\begin{theorem}
  \label{thm:duality:1}
  The maximal subspace of strong continuity of the adjoint semigroup $\STAR{T_0}$ is
   \begin{equation}
     \label{eq:duality:9}
    \begin{aligned}
    \SUN{X} = \Bigl\{f : [0,h] \to \STAR{Y} \,:\, & \text{ there exist } \SUN{y} \in \SUN{Y} \text{ and } g \in \LP{1}([0,h],\STAR{Y})\\
&\text{ such that } f(t) = \chi_0(t)\SUN{y} + \int_0^t{g(s)\,ds}\,\,\forall\,t \in [0,h]\Bigr\},
  \end{aligned}
  \end{equation}
  and $\iota : \SUN{Y} \times \LP{1}([0,h],\STAR{Y}) \to \SUN{X}$ defined by
  \begin{equation}
    \label{eq:duality:16}
     \iota(\SUN{y},g)(t) \DEF \chi_0(t)\SUN{y} + \int_0^t{g(s)\,ds}, \qquad \forall\,t \in [0,h],
  \end{equation}
  is an isometric isomorphism. The duality pairing between $\phi \in X$ and $\SUN{\phi} \DEF \iota(\SUN{y},g) \in \SUN{X}$ is given by
  \begin{equation}
    \label{eq:duality:3}
    \PAIR{\phi}{\SUN{\phi}} = \PAIR{\phi(0)}{\SUN{y}} + \int_0^h{\PAIR{\phi(-\theta)}{g(\theta)}\,d\theta}.
  \end{equation}
  For the action of $\SUN{T_0}$ on $\SUN{\phi} \DEF \iota(\SUN{y},g) \in \SUN{X}$ we have
  \begin{equation}
    \label{eq:duality:7}
    \SUN{T_0}(t)\SUN{\phi} = \iota(\SUN{S}(t)\SUN{y} + \int_0^{t \wedge h}{\STAR{S}(t - \theta)g(\theta)\,d\theta}, T_1(t)g),
  \end{equation}
  where the integral in the right-hand side is a weak$^\ast$ Lebesgue integral with values in $\SUN{Y}$.
\end{theorem}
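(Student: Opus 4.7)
The plan is to use the identification $\STAR{X} \simeq \NBV([0,h], \STAR{Y})$ supplied by \cref{def:integration:3,thm:integration:2}, under which the pairing reads $\PAIR{\phi}{\eta} = \int_0^h \PAIR{\phi(-\theta)}{d\eta(\theta)}$. First I would compute $\STAR{T_0}(t)$ in these coordinates. For $0 \le t \le h$, splitting the pairing at $\theta = t$ gives
\[
  \PAIR{T_0(t)\phi}{\eta} = \int_0^t \PAIR{S(t-\theta)\phi(0)}{d\eta(\theta)} + \int_t^h \PAIR{\phi(t-\theta)}{d\eta(\theta)}.
\]
Transferring $S(t-\theta)$ to the dual side in the first integral and substituting $\sigma = \theta - t$ in the second identifies $\STAR{T_0}(t)\eta$ as the \NBV-function whose distributional derivative is the sum of a Dirac mass at $0$ of size $\int_0^t \STAR{S}(t-\theta)\,d\eta(\theta)$ and the left-translate by $t$ of the restriction of $d\eta$ to $(t, h]$.

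With this formula in hand, I would determine when $\|\STAR{T_0}(t)\eta - \eta\|_{\NBV} \to 0$ as $t \downarrow 0$. Decompose $\eta$ into its atom $\alpha \in \STAR{Y}$ at $0$, an absolutely continuous part with density $g \in \LP{1}([0,h],\STAR{Y})$, and a remainder consisting of atoms in $(0, h]$ together with a singular continuous part. From the adjoint formula above, the new atom at $0$ equals $\STAR{S}(t)\alpha$ plus a correction whose norm is dominated by the total variation of $\eta - \alpha\chi_0$ on $[0, t]$, which tends to zero. Strong continuity of the atom thus reduces to $\STAR{S}(t)\alpha \to \alpha$, i.e.\ $\alpha \in \SUN{Y}$ by definition of the sun dual. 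For the continuous part, atoms in $(0, h]$ and singular continuous parts obstruct norm convergence (translation by $t$ produces pairs of atoms or mutually singular measures with a variation gap bounded from below), while an absolutely continuous remainder with density $g \in \LP{1}$ converges back by $\LP{1}$-continuity of translation. This forces exactly \cref{eq:duality:9}, and conversely any such $f$ lies in the subspace of strong continuity by a direct estimate.

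Given the characterization, that $\iota$ in \cref{eq:duality:16} is an isometric isomorphism follows from the standard decomposition of total variation: the variation of $\iota(\SUN{y}, g)$ splits as the atomic contribution $\|\SUN{y}\|$ plus the variation $\|g\|_{\LP{1}}$ of the absolutely continuous part. The pairing \cref{eq:duality:3} is obtained by inserting $\iota(\SUN{y}, g)$ into the Riemann-Stieltjes pairing: the atom at $0$ contributes $\PAIR{\phi(0)}{\SUN{y}}$ and the absolutely continuous part contributes the integral, using the fact from \cref{sec:integration} that the Riemann-Stieltjes integral against an absolutely continuous vector-valued function reduces to a Bochner integral against its density. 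Finally, \cref{eq:duality:7} is read off by applying the adjoint formula of the first step to $\eta = \iota(\SUN{y}, g)$: the translated continuous part becomes $T_1(t)g$ by definition of $T_1$, while the new atom at $0$ splits as $\STAR{S}(t)\SUN{y}$ from the original atom (coinciding with $\SUN{S}(t)\SUN{y}$ since $\SUN{y} \in \SUN{Y}$) plus $\int_0^{t \wedge h} \STAR{S}(t-\theta)\,g(\theta)\,d\theta$ from the absorbed part of the density; this last integral is a {\WSTAR} Lebesgue integral whose image lies in $\SUN{Y}$ by the standard sun-dual regularization of convolutions of $\STAR{S}$-orbits against $\LP{1}$ densities.

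The principal obstacle is the characterization of strong continuity of translation on $\NBV([0,h], \STAR{Y})$ in the vector-valued setting, especially the exclusion of singular continuous parts. In the scalar case this is classical folklore, but here it requires a careful argument, presumably via the vector bounded variation machinery of \cref{sec:integration} and the observation that any non-absolutely-continuous component of $\eta$ yields a definite variation gap between $\eta$ and its small translates. A secondary delicate point is the justification that the correction term appearing in the new atom at $0$ truly vanishes in norm as $t \downarrow 0$, which in the $\STAR{Y}$-valued setting has to be proved rather than quoted.
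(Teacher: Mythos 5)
Your proposal contains a genuine gap in the hard inclusion $\SUN{X} \subseteq E$ (where $E$ denotes the right-hand side of \cref{eq:duality:9}). You propose to decompose $\eta \in \NBV([0,h],\STAR{Y})$ into an atom at $0$, an absolutely continuous part \emph{with density} $g \in \LP{1}([0,h],\STAR{Y})$, and a remainder consisting of atoms in $(0,h]$ plus a singular continuous part. This trichotomy is not exhaustive unless $\STAR{Y}$ has the Radon--Nikod\'ym property: an absolutely continuous $\STAR{Y}$-valued function of bounded variation need not be an indefinite Bochner integral (take $Y = \LP{1}([0,1],\KK)$, $\STAR{Y} = \LP{\infty}([0,1],\KK)$ and $\eta(t) = \chi_{[0,t]}$, which is Lipschitz, non-atomic, with variation measure equal to Lebesgue measure, yet has no density). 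The paper deliberately avoids any RNP hypothesis (see the remark surrounding \cref{eq:sunstar:xsunstar}), so your ``variation gap'' argument, which only excludes atoms in $(0,h]$ and singular continuous parts, leaves the densityless absolutely continuous case untreated --- and that is precisely the case where a new idea is needed. You flag this yourself as ``the principal obstacle,'' but the sketch you give does not resolve it.

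The paper circumvents the issue entirely: instead of characterizing translation-continuity on $\NBV$, it computes the resolvent $R(\lambda,\STAR{A_0})f$ explicitly (\cref{lem:duality:6}), which \emph{manufactures} an $\LP{1}$ density $g(s) = e^{\lambda s}\int_s^h e^{-\lambda\theta}\,df(\theta)$ for arbitrary $f \in \STAR{X}$ (integrability following from \cref{prop:integration:1,prop:integration:2,prop:integration:6}, no RNP needed). Since $\SUN{X} = \BAR{\DOM(\STAR{A_0})} = \BAR{R(\lambda,\STAR{A_0})\STAR{X}}$ and $E$ is closed (via the isometry $\iota$), the inclusion follows. A salvage of your averaging philosophy is possible --- mollifying $\eta$ by $\frac{1}{s}\int_0^s \STAR{T_0}(r)\eta\,dr$ produces elements of $E$ converging to $\eta$ in variation norm whenever $\eta \in \SUN{X}$ --- but this is morally the resolvent argument again. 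Your remaining steps (the easy inclusion $E \subseteq \SUN{X}$, the isometry via the split of the total variation, the pairing \cref{eq:duality:3}, and reading off \cref{eq:duality:7}) align with the paper's, although your appeal to a ``standard sun-dual regularization'' for the claim that the {\WSTAR} integral lands in $\SUN{Y}$ should be replaced by the paper's argument from $\SUN{T_0}$-invariance of $\SUN{X}$ together with the separating sequence $(\phi_n)_n$.
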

Before proving the above theorem we first state and prove two lemmas.
\begin{lemma}
  \label{lem:duality:4}
  Let $E$ be defined as the right-hand side of \cref{eq:duality:9}. For every $\phi \in X$, $f \in E$ and $t \ge 0$,
  \begin{equation}
    \label{eq:duality:15}
    \begin{aligned}
      \PAIR{\phi}{\STAR{T_0}(t)f} = \PAIR{S(t)\phi(0)}{\SUN{y}} &+ \int_0^{t \wedge h}{\PAIR{S(t - \theta)\phi(0)}{g(\theta)}\,d\theta}\\
      &+ \int_0^h{\PAIR{\phi(-\theta)}{(T_1(t)g)(\theta)}\,d\theta},
    \end{aligned}
  \end{equation}
  where the Lebesgue integrals are taken to be zero when the upper and lower limits coincide.
\end{lemma}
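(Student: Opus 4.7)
The plan is to transfer $\STAR{T_0}(t)$ to the left of the pairing, apply the $\NBV$ representation of $\STAR{X}$ to decompose $\PAIR{T_0(t)\phi}{f}$, and then use the explicit piecewise form of $T_0(t)\phi$ from \cref{eq:adde:3} to split the resulting integral into the three terms appearing on the right of \cref{eq:duality:15}.

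First, by definition of the adjoint semigroup, $\PAIR{\phi}{\STAR{T_0}(t)f} = \PAIR{T_0(t)\phi}{f}$. Using the identification of $\STAR{X}$ with $\NBV([0,h],\STAR{Y})$ furnished by \cref{def:integration:3,thm:integration:2} and the bilinear Riemann-Stieltjes integration developed in \cref{sec:integration}, the pairing acts on any $\psi \in X$ as
$$\PAIR{\psi}{f} = \int_{[0,h]} \PAIR{\psi(-\theta)}{df(\theta)}.$$
For $f$ of the form in \cref{eq:duality:9}, the Stieltjes measure $df$ consists of an atom $\SUN{y}$ at $0$ together with an absolutely continuous part of density $g$ on $(0,h]$, so this simplifies to
$$\PAIR{\psi}{f} = \PAIR{\psi(0)}{\SUN{y}} + \int_0^h \PAIR{\psi(-\theta)}{g(\theta)}\,d\theta.$$

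Taking $\psi = T_0(t)\phi$ and noting that $(T_0(t)\phi)(0) = S(t)\phi(0)$ by \cref{eq:adde:3} produces the first term of \cref{eq:duality:15}. I would then split the remaining integral at $\theta = t \wedge h$. For $\theta \in [0, t \wedge h]$ the piecewise rule in \cref{eq:adde:3} gives $(T_0(t)\phi)(-\theta) = S(t-\theta)\phi(0)$, producing the second term directly. For $\theta \in [t \wedge h, h]$, which is nonempty only when $t \le h$, one has $(T_0(t)\phi)(-\theta) = \phi(t - \theta)$; substituting $\sigma = \theta - t$ and identifying $g(t + \sigma) = (T_1(t)g)(\sigma)$ for $\sigma \in [0, h - t]$, while $(T_1(t)g)(\sigma) = 0$ for $\sigma > h - t$, rewrites this contribution as $\int_0^h \PAIR{\phi(-\sigma)}{(T_1(t)g)(\sigma)}\,d\sigma$, matching the third term. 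In the complementary regime $t \ge h$, the residual integration range collapses (which is zero by the stated convention on coinciding limits) and simultaneously $T_1(t)g = 0$, so both sides agree trivially.

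The only genuinely substantive ingredient is the $\NBV$ pairing formula applied to the jump-plus-density decomposition of $f$; once this is in hand, the rest is a routine case analysis on the relative size of $t$ and $h$ together with a translation change of variable, which is justified by continuity of $\phi$ on $[-h,0]$ and $\LP{1}$-integrability of $g$ on $[0,h]$.
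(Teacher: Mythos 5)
Your proposal is correct and follows essentially the same route as the paper: transfer $\STAR{T_0}(t)$ to the left, use the $\NBV$ representation together with \cref{prop:integration:4} to express the pairing as an atom at $0$ plus a Lebesgue integral against $g$, insert the piecewise form \cref{eq:adde:3} of $T_0(t)\phi$, split at $t\wedge h$, and translate to identify the tail with $T_1(t)g$. The only (immaterial) difference is the order of operations — you decompose the full pairing first and then split the Lebesgue integral, whereas the paper splits the Riemann--Stieltjes integral at $t\wedge h$ first and then applies \cref{prop:integration:4} to each piece.
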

\begin{proof}
  For $t = 0$ the statement follows from \cref{prop:integration:4}, so we may suppose that $t > 0$. Using \cref{eq:adde:3} we obtain
  \begin{align*}
    \PAIR{\phi}{\STAR{T_0}(t)f} = \PAIR{T_0(t)\phi}{f} &= \int_0^h{(T_0(t)\phi)(-\theta)\,df(\theta)}\\
    &= \int_0^{t \wedge h}{S(t - \theta)\phi(0)\,df(\theta)} + \int_{t \wedge h}^h{\phi(t - \theta)\,df(\theta)}.
  \end{align*}
  Now \cref{prop:integration:4} implies that
  \begin{displaymath}
    \int_0^{t \wedge h}{S(t - \theta)\phi(0)\,df(\theta)} = \PAIR{S(t)\phi(0)}{\SUN{y}} + \int_0^{t \wedge h}{\PAIR{S(t - \theta)\phi(0)}{g(\theta)}\,d\theta}.
  \end{displaymath}
  By the same proposition we have, at first for $t \in (0,h)$,
  \begin{align*}
    \int_{t \wedge h}^h{\phi(t - \theta)\,df(\theta)} &= \int_{t \wedge h}^h{\PAIR{\phi(t - \theta)}{g(\theta)}\,d\theta}\\
    &= \int_0^{h - t}{\PAIR{\phi(-\theta)}{g(t + \theta)}\,d\theta}\\
    &= \int_0^h{\PAIR{\phi(-\theta)}{(T_1(t)g)(\theta)}\,d\theta},
  \end{align*}
  but in fact 
  \begin{displaymath}
    \int_{t \wedge h}^h{\phi(t - \theta)\,df(\theta)} = \int_0^h{\PAIR{\phi(-\theta)}{(T_1(t)g)(\theta)}\,d\theta}, \qquad \forall\,t > 0,
  \end{displaymath}
  since for $t \ge h$ the left-hand side vanishes while $T_1(t)g = 0$.
\end{proof}

The next lemma gives a representation for the resolvent of the adjoint generator $\STAR{A_0}$ of $\STAR{T_0}$.

\begin{lemma}
  \label{lem:duality:6}
  For $\lambda \in \CC$ with $\RE{\lambda}$ sufficiently large, let $R(\lambda,\STAR{A_0})$ and $R(\lambda,\STAR{B})$ be the resolvents of $\STAR{A_0}$ and $\STAR{B}$ at $\lambda$. For $f \in \STAR{X}$ define
  \begin{displaymath}
    \SUN{y} \DEF R(\lambda,\STAR{B})\int_0^h{e^{-\lambda \theta}\,df(\theta)} \in \DOM(\STAR{B}) \subseteq \SUN{Y},
  \end{displaymath}
  and 
  \begin{displaymath}
    g(s) \DEF e^{\lambda s}\int_s^h{e^{-\lambda\theta}\,df(\theta)}, \qquad \forall\,s \in [0,h].
  \end{displaymath}
  Then $g \in \LP{1}([0,h],\STAR{Y})$ and 
  \begin{equation}
    \label{eq:duality:26}
    [R(\lambda,\STAR{A_0})f](t) = \chi_0(t)\SUN{y} + \int_0^t{g(s)\,ds}, \qquad \forall\,t \in [0,h].
  \end{equation}
\end{lemma}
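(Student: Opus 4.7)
The plan is to establish \cref{eq:duality:26} using the Laplace-transform representation of the resolvent, then to match duality pairings under the NBV representation of $\STAR{X}$. I deliberately avoid invoking the pairing identity \cref{eq:duality:3} of \cref{thm:duality:1}, since that theorem is what the present lemma will be used to prove; instead I work only with the generic Riemann-Stieltjes pairing $\PAIR{\phi}{f} = \int_0^h \phi(-\theta)\,df(\theta)$ supplied by the NBV isomorphism.

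First I verify that $g \in \LP{1}([0,h],\STAR{Y})$. Viewing $f$ as an NBV function, the Riemann-Stieltjes integral $s \mapsto \int_s^h e^{-\lambda\theta}\,df(\theta)$ is bounded in norm by a constant multiple of $\TVN{f}$ and is itself of bounded variation in $s$, hence strongly measurable. Multiplying by the bounded factor $e^{\lambda s}$ yields $g \in \LP{\infty}([0,h],\STAR{Y}) \subseteq \LP{1}([0,h],\STAR{Y})$. The inclusion $\SUN{y} \in \DOM(\STAR{B}) \subseteq \SUN{Y}$ is the standard resolvent property of $\STAR{B}$.

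Next, for $\RE\lambda$ sufficiently large I use $R(\lambda,\STAR{A_0}) = \STAR{R(\lambda,A_0)}$ together with the Laplace formula $R(\lambda,A_0)\phi = \int_0^\infty e^{-\lambda t}T_0(t)\phi\,dt$. Substituting the explicit form \cref{eq:adde:3} of $T_0$ and changing variables yields, for $\theta \in [-h,0]$,
\[
[R(\lambda,A_0)\phi](\theta) = e^{\lambda\theta}R(\lambda,B)\phi(0) + \int_\theta^0 e^{\lambda(\theta - s)}\phi(s)\,ds.
\]
I then expand $\PAIR{\phi}{R(\lambda,\STAR{A_0})f} = \PAIR{R(\lambda,A_0)\phi}{f} = \int_0^h [R(\lambda,A_0)\phi](-\theta)\,df(\theta)$. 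The first summand produces $\PAIR{R(\lambda,B)\phi(0)}{\int_0^h e^{-\lambda\theta}\,df(\theta)} = \PAIR{\phi(0)}{\SUN{y}}$ after transferring the resolvent to the dual side. For the second summand, substituting $\sigma = -s$ and exchanging the Lebesgue and Stieltjes integrations via a Fubini-type identity from \cref{sec:integration} yields $\int_0^h \PAIR{\phi(-\sigma)}{g(\sigma)}\,d\sigma$.

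Finally, I view the right-hand side of \cref{eq:duality:26} as an NBV function $\tilde f$ with jump $\SUN{y}$ at $0^+$ (from the $\chi_0$ term) and Radon-Nikodym derivative $g$. Its Riemann-Stieltjes pairing with any $\phi \in X$ is precisely $\PAIR{\phi(0)}{\SUN{y}} + \int_0^h \PAIR{\phi(-\theta)}{g(\theta)}\,d\theta$, which matches the computation above. By the NBV-$\STAR{X}$ isomorphism this forces $R(\lambda,\STAR{A_0})f = \tilde f$, establishing \cref{eq:duality:26}. The main technical obstacle is the Fubini-type exchange between Stieltjes and Lebesgue integrals together with the careful bookkeeping of the jump of $\chi_0\SUN{y}$ at zero; both are expected to be routine consequences of the results collected in \cref{sec:integration}.
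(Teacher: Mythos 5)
Your proposal is correct and follows essentially the same route as the paper: the explicit resolvent formula for $R(\lambda,A_0)$, transfer to the adjoint via $R(\lambda,\STAR{A_0})=\STAR{R(\lambda,A_0)}$, a Fubini-type exchange of the Stieltjes and Lebesgue integrations, and identification of the resulting functional through its Riemann--Stieltjes pairing (\cref{prop:integration:4}). The only cosmetic difference is your argument that $g\in\LP{1}([0,h],\STAR{Y})$: you show the tail integral $s\mapsto\int_s^h e^{-\lambda\theta}\,df(\theta)$ is itself of bounded variation, whereas the paper integrates by parts to write $g=\tilde g-f$ with $\tilde g$ continuous; both reduce to \cref{prop:integration:6}.
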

\begin{proof}
  Let $R(\lambda,A_0)$ and $R(\lambda,B)$ be the resolvents of $A_0$ and $B$ at $\lambda$. These resolvent are guaranteed to exist for $\RE{\lambda}$ sufficiently large, because $A_0$ and $B$ are generators of \C{0}-semigroups. As a special case of \cite[Proposition VI.6.7]{Engel2000} we have
  \begin{displaymath}
    [R(\lambda,A_0)\phi](\theta) = e^{\lambda \theta}R(\lambda,B)\phi(0) + \int_{\theta}^0{e^{\lambda(\theta - s)}\phi(s)\,ds}, \qquad \forall\,\phi \in X,\, \theta \in [-h,0].
  \end{displaymath}
  Since $R(\lambda,\STAR{A_0}) = \STAR{R(\lambda,A_0)}$ we have for every $\phi \in X$ and every $f \in \STAR{X}$ that
  \begin{align}
    \PAIR{\phi}{R(\lambda,\STAR{A_0})f} &= \PAIR{R(\lambda,A_0)\phi}{f}\nonumber\\
    &= \int_0^h{e^{-\lambda \theta}R(\lambda,B)\phi(0)\,df(\theta)} + \int_0^h{\int_{-\theta}^0{e^{-\lambda(\theta + s)}\phi(s)\,ds}\,df(\theta)}. \label{eq:duality:13}
  \end{align}
  By \cref{prop:integration:5} the first term in \cref{eq:duality:13} equals
  \begin{displaymath}
    \PAIR{R(\lambda,B)\phi(0)}{\int_0^h{e^{-\lambda \theta}\,df(\theta)}} = \PAIR{\phi(0)}{R(\lambda,\STAR{B})\int_0^h{e^{-\lambda \theta}\,df(\theta)}} = \PAIR{\phi(0)}{\SUN{y}}.
  \end{displaymath}
  Meanwhile, by Fubini's theorem the second term in \cref{eq:duality:13} equals
  \begin{align*}
    \int_0^h{\int_s^h{e^{-\lambda(\theta - s)}\phi(-s)\,df(\theta)}\,ds} &= \int_0^h{\PAIR{\phi(-s)}{e^{\lambda s}\int_s^h{e^{-\lambda\theta}\,df(\theta)}}\,ds} && \text{by \cref{prop:integration:5}}\\
    &= \int_0^h{\PAIR{\phi(-s)}{g(s)}\,ds}.
  \end{align*}
  For any $s \in [0,h]$ we have by \cref{prop:integration:1,prop:integration:2} that
  \begin{displaymath}
    g(s) = \tilde{g}(s) - f(s), \qquad \forall\,s \in [0,h],
  \end{displaymath}
  for a function $\tilde{g} : [0,h] \to \STAR{Y}$ that is continuous, hence Bochner integrable. \Cref{prop:integration:6} implies that $f$ is Bochner integrable as well, so $g \in \LP{1}([0,h],\STAR{Y})$. From \cref{eq:duality:13} we conclude that
  \begin{displaymath}
    \PAIR{\phi}{R(\lambda,\STAR{A_0})f} = \PAIR{\phi(0)}{\SUN{y}} + \int_0^h{\PAIR{\phi(-s)}{g(s)}\,ds}, \qquad \forall\,\phi \in X.
  \end{displaymath}
  \Cref{prop:integration:4} then yields \cref{eq:duality:26}.
\end{proof}
\begin{proof}[Proof of \cref{thm:duality:1}]
  For clarity we proceed in a number of steps. We will first prove \cref{eq:duality:9} by showing two inclusions. Next we verify the statement regarding the isometry, which will also show that \cref{eq:duality:3} is true. We conclude by proving \cref{eq:duality:7}, including the statement about the range of the \WSTAR-integral appearing there. 
  \begin{steps}
  \item
    Let $f \in E$ be arbitrary, where we recall from \cref{lem:duality:4} that $E$ is defined as the right-hand side of \cref{eq:duality:9}. We prove that $f \in \SUN{X}$. As we are concerned with the limit $t \downarrow 0$, in this step we may assume without loss of generality that $t \in (0,h)$ whence $t \wedge h = t$. Invoking \cref{prop:integration:4} once more we also have
    \begin{equation}
      \label{eq:duality:5}
      \PAIR{\phi}{f} = \PAIR{\phi(0)}{\SUN{y}} + \int_0^h{\PAIR{\phi(-\theta)}{g(\theta)}\,d\theta},
    \end{equation}
    so from \cref{eq:duality:5,eq:duality:15} we obtain
    \begin{subequations}
      \begin{align}
        |\PAIR{\phi}{\STAR{T_0}(t)f} - \PAIR{\phi}{f}| &\le |\PAIR{S(t)\phi(0)}{\SUN{y}}  - \PAIR{\phi(0)}{\SUN{y}}|\label{eq:duality:10}\\
                                                       &+ \Bigl|\int_0^h{\PAIR{\phi(-\theta)}{(T_1(t)g)(\theta)}\,d\theta} - \int_0^h{\PAIR{\phi(-\theta)}{g(\theta)}\,d\theta}\Bigr|\label{eq:duality:11}\\
                                                       &+ \Bigl|\int_0^t{\PAIR{S(t - \theta)\phi(0)}{g(\theta)}\,d\theta}\Bigr|.\label{eq:duality:12}
      \end{align}
    \end{subequations}
    If $\|\phi\| = 1$ then \cref{eq:duality:10} equals
    \begin{displaymath}
      |\PAIR{\phi(0)}{\STAR{S}(t)\SUN{y}}  - \PAIR{\phi(0)}{\SUN{y}}| \le \|\STAR{S}(t)\SUN{y} - \SUN{y}\| \to 0 \qquad \text{as } t \downarrow 0,
    \end{displaymath}
    while \cref{eq:duality:11} does not exceed
    \begin{displaymath}
      \int_0^h{\|(T_1(t)g)(\theta) - g(\theta)\|\,d\theta} \to 0 \qquad \text{ as } t \downarrow 0,
    \end{displaymath}
    by the strong continuity of $T_1$. Concerning \cref{eq:duality:12} we observe that
    \begin{displaymath}
      |\PAIR{S(t - \theta)\phi(0)}{g(\theta)}| \le \|S(t - \theta)\|\cdot\|g(\theta)\| \le M e^{|\omega|t}\|g(\theta)\| \qquad \text{a.e. } \theta \in [0,t],
    \end{displaymath}
    for certain $M \ge 1$ and $\omega \in \RR$ depending only on $S$. Since $M e^{|\omega|t}$ stays bounded as $t \downarrow 0$ it follows from the integrability of $\|g\|$ on $[0,h]$ and the scalar dominated convergence theorem that \cref{eq:duality:12} tends to zero as $t \downarrow 0$, uniformly for $\|\phi\| = 1$. We conclude that $f \in \SUN{X}$.
  \item
    We next prove that $\SUN{X} \subseteq E$. Following an idea appearing in the proof of \cite[Theorem 2.2]{Greiner1992}, we invoke \cref{eq:duality:26} in \cref{lem:duality:6} to conclude that $R(\lambda,\STAR{A_0})\STAR{X} \subseteq E$, which is sufficient in light of the fact that $\SUN{X} = \BAR{\DOM(\STAR{A_0})}$ and $E$ is closed. (The closedness of $E$ will be proved independently in the next step.) 
  \item
    \Cref{lem:duality:1} shows that $\iota$ defined by \cref{eq:duality:16} is an isometry onto $E$. We note that this implies in particular that $E$ is closed in $\STAR{X}$. The equality in \cref{eq:duality:3} is simply \cref{eq:duality:15} with $f = \SUN{\phi}$ and $t = 0$.
  \item
    It remains to prove \cref{eq:duality:7}. Let $\phi \in X$ be arbitrary. By \cref{eq:duality:15} with $f = \SUN{\phi}$ we have
    \begin{equation}
      \label{eq:duality:4}
      \begin{aligned}
        \PAIR{\phi}{\SUN{T_0}(t)\SUN{\phi}} = \PAIR{\phi(0)}{\SUN{S}(t)\SUN{y} &+ \int_0^{t \wedge h}{\STAR{S}(t - \theta){g(\theta)}\,d\theta}}\\
        &+ \int_0^h{\PAIR{\phi(-\theta)}{(T_1(t)g)(\theta)}\,d\theta},
      \end{aligned}
    \end{equation}
    where we note the appearance of the {\WSTAR}-Lebesgue integral. Since $\SUN{X}$ is $\SUN{T_0}$-invariant, there exist $\SUN{v} \in \SUN{Y}$ and $w \in \LP{1}([0,h],\STAR{Y})$ such that $\SUN{T_0}(t)\SUN{\phi} = \iota(\SUN{v},w)$. By \cref{eq:duality:3,eq:duality:4} it then follows that
    \begin{align*}
      \PAIR{\phi(0)}{\SUN{S}(t)\SUN{y} + \int_0^{t \wedge h}{\STAR{S}(t - \theta){g(\theta)}\,d\theta} - \SUN{v}} + \int_0^h{\PAIR{\phi(-\theta)}{(T_1(t)g)(\theta) - w(\theta)}\,d\theta} = 0,
    \end{align*}
    for all $\phi \in X$. Now let $y \in Y$ be arbitrary and let $(\phi_n)_n$ be a uniformly bounded sequence in $X$ such that $\phi_n(0) = y$ for all $n \in \NN$ and $\phi_n$ converges a.e. to zero. Substituting $\phi_n$ for $\phi$ in the above identity, taking the limit $n \to \infty$ and using the scalar dominated convergence theorem on the rightmost integral, one obtains
    \begin{displaymath}
      \PAIR{y}{\SUN{v}} = \PAIR{y}{\SUN{S}(t)\SUN{y}} + \int_0^{t \wedge h}{\PAIR{y}{\STAR{S}(t - \theta){g(\theta)}}\,d\theta},
    \end{displaymath}
    and since $y$ was arbitrary, we infer that
    \begin{displaymath}
      \SUN{v} = \SUN{S}(t)\SUN{y} + \int_0^{t \wedge h}{\STAR{S}(t - \theta){g(\theta)}\,d\theta},
    \end{displaymath}
    so in particular the {\WSTAR}-integral takes values in $\SUN{Y}$. It now follows from \cref{eq:duality:3,eq:duality:4} that
    \begin{displaymath}
      \PAIR{\phi}{\SUN{T_0}(t)\SUN{\phi}} = \PAIR{\phi}{\iota(\SUN{S}(t)\SUN{y} + \int_0^{t \wedge h}{\STAR{S}(t - \theta){g(\theta)}\,d\theta},T_1(t)g)},
    \end{displaymath}
    for all $\phi \in X$, which is precisely \cref{eq:duality:7}. \qedhere
  \end{steps}
\end{proof}

\begin{remark}
  \label{rem:duality:1}
  The elements $\SUN{y}$ and $g$ in the right-hand side of \cref{eq:duality:9} are unique. To see this, suppose that the equality
  \begin{displaymath}
    f(t) = \chi_0(t)\SUN{y} + \int_0^t{g(\theta)\,d\theta}, \qquad \forall\,t \in [0,h],
  \end{displaymath}
  also holds with $\SUN{y}$ replaced by $\SUN{y_1}$ and $g$ by $g_1$. By subtraction we obtain
  \begin{displaymath}
    \| \SUN{y} - \SUN{y_1}\| \le \int_0^t{\|g(\theta) - g_1(\theta)\|\,d\theta}
  \end{displaymath}
  for $t > 0$. By taking the limit $t \downarrow 0$ and using the scalar dominated convergence theorem we find that $\SUN{y} = \SUN{y_1}$. Consequently it holds that
  \begin{displaymath}
    \int_0^t{[g(\theta) - g_1(\theta)]\,d\theta} = 0, \qquad \forall\,t \in [0,h],
  \end{displaymath}
  so $g = g_1$ a.e. on $[0,h]$ by \cite[Proposition 1.2.2a]{Arendt2011}. \hfill \QEDEX
\end{remark}

\begin{remark}
  \label{rem:duality:2}
  From now on we will identify $\SUN{X}$ with $\SUN{Y} \times \LP{1}([0,h],\STAR{Y})$. This is justified by the existence of the isometric isomorphism $\iota$ of \cref{thm:duality:1}. So, we write $\SUN{\phi} = (\SUN{y},g) \in \SUN{X}$, express the duality pairing of $\phi \in X$ with $\SUN{\phi} \in \SUN{X}$ as in \cref{eq:duality:3} and suppress $\iota$ in \cref{eq:duality:7}. \hfill \QEDEX
\end{remark}

\section{The range of the convolution integral}\label{sec:duality:range}
We now turn our attention to the convolution integral appearing in \cref{eq:adde:2}. Throughout, $T_0$ will be the shift semigroup defined by \cref{eq:adde:3}. (For the reader familiar with \cite{Diekmann2008} it may be interesting to see the close parallels between the forthcoming and the line of thought leading to \cite[Corollary 2.4]{Diekmann2008}, although the technical details are different.) Define $\delta \in \BND(\SUN{X},\SUN{Y})$ by 
\begin{equation}
  \label{eq:duality:28}
  \delta \SUN{\phi} \DEF \SUN{y}, \qquad \forall\,\SUN{\phi} = (\SUN{y},g) \in \SUN{X}.
\end{equation}
Let $j_Y$ be the canonical embedding of $Y$ into $\SUNSTAR{Y}$,
\begin{equation}
  \label{eq:duality:27}
  \PAIR{\SUN{y}}{j_Yy} \DEF \PAIR{y}{\SUN{y}}, \qquad \forall\,y \in Y,\, \SUN{y} \in \SUN{Y},
\end{equation}
cf. \cref{eq:adde:4}, and define
\begin{equation}
  \label{eq:duality:19}
  \ell \DEF \STAR{\delta} j_Y \in \BND(Y,\SUNSTAR{X}),
\end{equation}
It follows directly from the foregoing definitions that
\begin{equation}
  \label{eq:duality:2}
  \PAIR{y}{\delta \SUN{\phi}} = \PAIR{\SUN{\phi}}{\ell y}, \qquad \forall\,\SUN{\phi} \in \SUN{X},\, y \in Y.
\end{equation}

\begin{lemma}
  \label{lem:duality:8}
  $\ell y = (j_Yy, 0)$ for all $y \in Y$. Consequently $\ell$ is an isomorphism onto its closed range.
\end{lemma}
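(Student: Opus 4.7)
The plan is to identify $\ell y$ as an element of the product representation $\SUNSTAR{X} \simeq \SUNSTAR{Y} \times \STAR{[\LP{1}([0,h],\STAR{Y})]}$ indicated in \cref{fig:duality:1} and induced dually by the isomorphism $\iota$ of \cref{thm:duality:1}. Under this identification, the duality pairing of $\SUN{\phi} = (\SUN{y},g) \in \SUN{X}$ with a generic element $(z, \eta) \in \SUNSTAR{Y} \times \STAR{[\LP{1}([0,h],\STAR{Y})]}$ is given by
\begin{equation*}
  \PAIR{\SUN{\phi}}{(z,\eta)} = \PAIR{\SUN{y}}{z} + \PAIR{g}{\eta}.
\end{equation*}

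First I would fix $y \in Y$ and compute the left-hand side of \cref{eq:duality:2} for an arbitrary $\SUN{\phi} = (\SUN{y}, g) \in \SUN{X}$. By the definition \cref{eq:duality:28} of $\delta$ and the definition \cref{eq:duality:27} of $j_Y$,
\begin{equation*}
  \PAIR{y}{\delta \SUN{\phi}} = \PAIR{y}{\SUN{y}} = \PAIR{\SUN{y}}{j_Y y}.
\end{equation*}
Writing $\ell y = (z, \eta)$ in the product representation, \cref{eq:duality:2} therefore becomes
\begin{equation*}
  \PAIR{\SUN{y}}{j_Y y} = \PAIR{\SUN{y}}{z} + \PAIR{g}{\eta} \qquad \forall\,(\SUN{y},g) \in \SUN{Y} \times \LP{1}([0,h],\STAR{Y}).
\end{equation*}
Specializing first to $g = 0$ with arbitrary $\SUN{y}$ yields $z = j_Y y$, and then specializing to $\SUN{y} = 0$ with arbitrary $g$ yields $\PAIR{g}{\eta} = 0$ for all $g$, hence $\eta = 0$. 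This proves that $\ell y = (j_Y y, 0)$.

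For the second assertion, I would invoke the standard fact from sun-star theory that $j_Y : Y \to \SUNSTAR{Y}$ is an isometric embedding (see for instance \cite[Chapter 2]{VanNeerven1992} or \cite[Appendix II.3]{Diekmann1995}); this rests on the fact that $\SUN{Y}$, containing $\DOM(\STAR{B})$, is weak$^\ast$ dense in $\STAR{Y}$ and hence norming for $Y$. Combined with the product norm structure on $\SUNSTAR{Y} \times \STAR{[\LP{1}([0,h],\STAR{Y})]}$ dual to that on $\SUN{Y} \times \LP{1}([0,h],\STAR{Y})$, the formula $\ell y = (j_Y y, 0)$ gives $\|\ell y\|_{\SUNSTAR{X}} = \|j_Y y\|_{\SUNSTAR{Y}} = \|y\|_Y$, so $\ell$ is an isometry into $\SUNSTAR{X}$ and therefore an isomorphism onto its closed range.

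The main (minor) obstacle is bookkeeping: making certain that the pairing on the product $\SUNSTAR{Y} \times \STAR{[\LP{1}([0,h],\STAR{Y})]}$ induced by dualizing $\iota$ really decomposes as a sum as used above, which is a direct but slightly fussy consequence of the isometric isomorphism $\iota$ of \cref{thm:duality:1}. Once this identification is made, the argument is a two-line computation followed by an appeal to the isometric nature of $j_Y$.
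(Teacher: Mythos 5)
Your argument is essentially the paper's: both compute $\PAIR{\SUN{\phi}}{\ell y}$ from \cref{eq:duality:2,eq:duality:28,eq:duality:27} and read off $\ell y = (j_Yy,0)$ under the product identification of $\SUNSTAR{X}$, and both then reduce the second assertion to the embedding properties of $j_Y$. Your bookkeeping with a generic $(z,\eta)$ and the two specializations $g=0$, $\SUN{y}=0$ is a slightly more explicit version of the paper's one-line chain of equalities, and is fine.

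One caveat on the final step: $j_Y$ is in general \emph{not} isometric, and weak$^\ast$ density of $\SUN{Y}$ in $\STAR{Y}$ only gives separation of points, not the $1$-norming property you invoke. The correct standard fact is the two-sided bound $M^{-1}\|y\| \le \|j_Yy\| \le \|y\|$ with $M \DEF \limsup_{t \downarrow 0}\|S(t)\| \ge 1$, obtained from the resolvent averaging argument (equality holds e.g.\ when $S$ is a contraction semigroup). Since the lemma only claims that $\ell$ is an isomorphism onto its closed range, this weaker, correct statement suffices and your conclusion stands; the paper itself phrases this step as $j_Y$ having closed range in $\SUNSTAR{Y}$, which is exactly the embedding property.
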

\begin{proof}
  For any $\SUN{\phi} = (\SUN{y},g) \in \SUN{X}$,
  \begin{displaymath}
    \PAIR{\SUN{\phi}}{\ell y} = \PAIR{y}{\delta \SUN{\phi}} = \PAIR{y}{\SUN{y}} = \PAIR{\SUN{y}}{j_Yy} = \PAIR{\SUN{\phi}}{(j_Yy,0)}
  \end{displaymath}
  where \cref{eq:duality:2} was used in the first equality. The second statement is clear, with the closedness of $\ell(Y)$ in $\SUNSTAR{X}$ following from the fact that $j_Y$ has closed range in $\SUNSTAR{Y}$.
\end{proof}

We now turn to the discussion of the range of the convolution integral in \cref{eq:adde:2}.

\begin{lemma}
  \label{lem:duality:2}
  For $\SUN{\phi} = (\SUN{y},g) \in \SUN{X}$ and $y \in Y$ we have
  \begin{displaymath}
    \PAIR{\SUN{\phi}}{\SUNSTAR{T_0}(t)\ell y} = \PAIR{y}{\SUN{S}(t)\SUN{y}} + \int_0^{t \wedge h}{\PAIR{S(t - \theta)y}{g(\theta)}\,d\theta}.
  \end{displaymath}
\end{lemma}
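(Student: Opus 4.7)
The plan is to unfold the definition of $\SUNSTAR{T_0}(t)$ as the adjoint of $\SUN{T_0}(t)$ and then apply the explicit formula for $\SUN{T_0}(t)$ obtained in \cref{eq:duality:7}. Concretely, I would first write
\[
  \PAIR{\SUN{\phi}}{\SUNSTAR{T_0}(t)\ell y} = \PAIR{\SUN{T_0}(t)\SUN{\phi}}{\ell y},
\]
and then combine this with the defining identity \cref{eq:duality:2} of $\ell$ to obtain
\[
  \PAIR{\SUN{\phi}}{\SUNSTAR{T_0}(t)\ell y} = \PAIR{y}{\delta\, \SUN{T_0}(t)\SUN{\phi}},
\]
where $\delta$ is the projection onto the first component introduced in \cref{eq:duality:28}.

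Next I would plug in \cref{eq:duality:7}, which tells us that under the identification in \cref{rem:duality:2}
\[
  \SUN{T_0}(t)\SUN{\phi} = \Bigl(\SUN{S}(t)\SUN{y} + \int_0^{t \wedge h}{\STAR{S}(t - \theta)g(\theta)\,d\theta},\ T_1(t)g\Bigr),
\]
so applying $\delta$ simply picks out the first component. Pairing with $y$ then gives
\[
  \PAIR{y}{\delta\, \SUN{T_0}(t)\SUN{\phi}} = \PAIR{y}{\SUN{S}(t)\SUN{y}} + \BIGPAIR{y}{\int_0^{t \wedge h}{\STAR{S}(t - \theta)g(\theta)\,d\theta}}.
\]

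The only subtle point is to move the pairing with $y$ under the integral sign. Since the integral in the right-hand side of \cref{eq:duality:7} is a \WSTAR-Lebesgue integral (with values in $\SUN{Y}$ by \cref{thm:duality:1}), applying the functional $j_Y y \in \SUNSTAR{Y}$ to it distributes over the integral by the very definition of the \WSTAR-integral, yielding
\[
  \BIGPAIR{y}{\int_0^{t \wedge h}{\STAR{S}(t - \theta)g(\theta)\,d\theta}} = \int_0^{t \wedge h}{\PAIR{y}{\STAR{S}(t - \theta)g(\theta)}\,d\theta} = \int_0^{t \wedge h}{\PAIR{S(t - \theta)y}{g(\theta)}\,d\theta},
\]
where the last step uses the definition of the ordinary Banach adjoint $\STAR{S}(t-\theta)$. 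Assembling the two contributions yields the claimed identity. I do not expect any real obstacle here; the verification is essentially bookkeeping, the main ingredients being \cref{eq:duality:7,eq:duality:2} and the interchange of pairing and integration that is built into the definition of the \WSTAR-integral.
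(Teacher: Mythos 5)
Your proposal is correct and follows exactly the same route as the paper's own proof: reduce to $\PAIR{y}{\delta\,\SUN{T_0}(t)\SUN{\phi}}$ via \cref{eq:duality:2} and then apply \cref{eq:duality:7}. The only difference is that you spell out the interchange of the pairing with the {\WSTAR}-integral, which the paper leaves implicit.
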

\begin{proof}
  It holds that
  \begin{align*}
    \PAIR{\SUN{\phi}}{\SUNSTAR{T_0}(t)\ell y} = \PAIR{\SUN{T_0}(t)\SUN{\phi}}{\ell y} &= \PAIR{y}{\delta\SUN{T_0}(t)\SUN{\phi}} && \text{by \cref{eq:duality:2}}\\
    &= \PAIR{y}{\SUN{S}(t)\SUN{y}} + \int_0^{t \wedge h}{\PAIR{S(t - \theta)y}{g(\theta)}\,d\theta}, && \text{by \cref{eq:duality:7}}
  \end{align*}
  as claimed.
\end{proof}

The next result shows that for a certain class of continuous functions on $\RR_+$ with values in the range of $\ell$, the {\WSTAR} convolution integral with $\SUNSTAR{T_0}$ takes values in the range of $j$. The integral also satisfies a standard estimate.

\begin{proposition}
  \label{prop:duality:1}
  Let $f : \RR_+ \to Y$ be continuous, let $t \ge 0$ and define $\psi \in X$ by
  \begin{displaymath}
    \psi(\theta) \DEF \int_0^{(t + \theta)^+}{S(t - \tau + \theta)f(\tau)\,d\tau}, \qquad \forall\,\theta \in [-h,0],
  \end{displaymath}
  where $(t + \theta)^+ \DEF (t + \theta) \vee 0$. Then
  \begin{equation}
    \label{eq:duality:8}
    \int_0^t{\SUNSTAR{T_0}(t - \tau)\ell f(\tau)\,d\tau} = j\psi,
  \end{equation}
  where $j$ is the canonical embedding of $X$ into $\SUNSTAR{X}$ given by \cref{eq:adde:4}. Moreover, there exist constants $M \ge 1$ and $\omega \in \RR$ such that
  \begin{equation}
    \label{eq:duality:18}
    \Bigl\|j^{-1}\int_0^t{\SUNSTAR{T_0}(t - \tau)\ell f(\tau)\,d\tau}\Bigr\| \le M \frac{e^{\omega t} - 1}{\omega} \sup_{0 \le \tau \le t}{\|f(\tau)\|},
  \end{equation}
  where the factor $(e^{\omega t} - 1)\omega^{-1}$ must be interpreted as its limiting value $t$ in case $\omega = 0$.
\end{proposition}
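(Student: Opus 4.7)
The natural strategy is to test the alleged identity \cref{eq:duality:8} against an arbitrary $\SUN{\phi} = (\SUN{y},g) \in \SUN{X}$; since the duality pairing separates points of $\SUNSTAR{X}$, matching all such scalar pairings is sufficient. By the defining property of the {\WSTAR} Riemann integral, the pairing passes under the integral sign, and \cref{lem:duality:2} (applied with $y$ replaced by $f(\tau)$ and $t$ by $t - \tau$) evaluates the integrand as
\[
\PAIR{f(\tau)}{\SUN{S}(t-\tau)\SUN{y}} + \int_0^{(t-\tau)\wedge h} \PAIR{S(t-\tau-\theta)f(\tau)}{g(\theta)}\,d\theta,
\]
which I would then integrate over $\tau \in [0,t]$ to obtain the expanded form of the left-hand side of \cref{eq:duality:8}.

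For the right-hand side, I would unfold $\PAIR{\psi}{\SUN{\phi}}$ using \cref{eq:duality:3} and substitute the explicit definition of $\psi$. The boundary piece $\PAIR{\psi(0)}{\SUN{y}} = \int_0^t \PAIR{S(t-\tau)f(\tau)}{\SUN{y}}\,d\tau$ immediately matches the first term above, since $\SUN{y} \in \SUN{Y}$ forces the $\STAR{S}$-action to coincide with $\SUN{S}$. The distributed piece $\int_0^h \PAIR{\psi(-\theta)}{g(\theta)}\,d\theta$ requires a Fubini swap after moving the $Y$-$\STAR{Y}$ pairing inside the Bochner integral defining $\psi(-\theta)$: the domain $\{(\theta,\tau)\,:\,0 \le \theta \le h,\,0 \le \tau \le (t-\theta)^+\}$ coincides with $\{(\tau,\theta)\,:\,0 \le \tau \le t,\,0 \le \theta \le (t-\tau)\wedge h\}$, and Fubini is justified by standard measurability together with the integrability of $\|g\|$ against the uniform bound $\|S(s)\| \le Me^{|\omega| t}$. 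Since $\SUN{\phi}$ was arbitrary, \cref{eq:duality:8} follows, and in particular the convolution integral lies in the range of $j$ (hence in $\SUNSUN{X}$), so $j^{-1}$ is legitimately applied.

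For the estimate \cref{eq:duality:18}, I would exploit the isometry of $j$ to reduce the task to bounding $\|\psi\|$ in the supremum norm of $X$. Writing $\|S(s)\| \le Me^{\omega s}$ (enlarging $\omega$ so that $\omega \ge 0$ if needed) and performing the change of variables $s = t-\tau+\theta$ in the definition of $\psi(\theta)$ gives
\[
\|\psi(\theta)\| \le M \sup_{0 \le \tau \le t}\|f(\tau)\| \int_0^{t+\theta} e^{\omega s}\,ds,
\]
and since the right-hand side is monotone nondecreasing in the upper limit $t+\theta \le t$, its supremum over $\theta \in [-h,0]$ is attained at $\theta = 0$, yielding exactly \cref{eq:duality:18}, with the convention $(e^{\omega t}-1)/\omega = t$ at $\omega = 0$ coming from continuity. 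The main obstacle I foresee is notational rather than conceptual: one must verify carefully that the two descriptions of the Fubini region via $(t-\theta)^+$ and $(t-\tau)\wedge h$ match uniformly across the regimes $t \le h$ and $t > h$; beyond this bookkeeping, the argument is a direct computation using \cref{lem:duality:2,eq:duality:3} and the isometric character of $j$.
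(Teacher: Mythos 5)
Your proposal is correct and follows essentially the same route as the paper: pair the {\WSTAR} integral with an arbitrary $\SUN{\phi}=(\SUN{y},g)\in\SUN{X}$, evaluate the integrand via \cref{lem:duality:2}, match the result with $\PAIR{\psi}{\SUN{\phi}}$ from \cref{eq:duality:3} by a Fubini interchange over the triangular region, and then bound $\|\psi\|_X$ pointwise to obtain \cref{eq:duality:18}. The only cosmetic difference is in the estimate, where you enlarge $\omega$ to be nonnegative while the paper keeps $\omega$ as is and absorbs a factor $e^{|\omega|h}$ into $M$; both yield the asserted constants.
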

\begin{proof}
  We first prove \cref{eq:duality:8} and then show the estimate \cref{eq:duality:18}.
  \begin{steps}
  \item
  For any $\SUN{\phi} = (\SUN{y},g) \in \SUN{X}$ we have
  \begin{subequations}
    \begin{align}
      \PAIR{\SUN{\phi}}{\int_0^t{\SUNSTAR{T_0}(t - \tau)\ell f(\tau)\,d\tau}} &= \int_0^t{\PAIR{\SUN{\phi}}{\SUNSTAR{T_0}(t - \tau)\ell f(\tau)}\,d\tau}\nonumber\\
      &= \int_0^t{\PAIR{f(\tau)}{\SUN{S}(t - \tau)\SUN{y}}\,d\tau}\label{eq:duality:14}\\
      &+ \int_0^t{\int_0^{(t - \tau) \wedge h}{\PAIR{S(t - \tau - \theta)f(\tau)}{g(\theta)}\,d\theta}\,d\tau}, \label{eq:duality:17}
    \end{align}
  \end{subequations}
  where \cref{lem:duality:2} was used for the second equality. We see that \cref{eq:duality:14} is equal to
  \begin{displaymath}
     \int_0^t{\PAIR{S(t-\tau)f(\tau)}{\SUN{y}}\,d\tau} = \PAIR{\int_0^t{S(t-\tau)f(\tau)\,d\tau}}{\SUN{y}} = \PAIR{\psi(0)}{\SUN{y}}.
  \end{displaymath}
  Let $\tilde{g} \in \LP{1}(\RR_+,\STAR{Y})$ be the extension of $g$ to $\RR_+$ by zero, i.e. $\tilde{g}(\theta) \DEF 0$ for a.e. $\theta > h$. Then by Fubini's theorem \cref{eq:duality:17} equals
  \begin{align*}
   \int_0^t{\int_0^{t - \tau}{\PAIR{S(t - \tau - \theta)f(\tau)}{\tilde{g}(\theta)}\,d\theta}\,d\tau} &= \int_0^t{\int_0^{t - \theta}{\PAIR{S(t - \tau -\theta)f(\tau)}{\tilde{g}(\theta)}\,d\tau}\,d\theta}\\
    &= \int_0^t{\PAIR{\int_0^{t-\theta}{S(t - \tau - \theta)f(\tau)\,d\tau}}{\tilde{g}(\theta)}\,d\theta}\\
    &= \int_0^{t \wedge h}{\PAIR{\int_0^{t-\theta}{S(t - \tau - \theta)f(\tau)\,d\tau}}{g(\theta)}\,d\theta}\\
    &= \int_0^h{\PAIR{\int_0^{(t-\theta)^+}{S(t - \tau - \theta)f(\tau)\,d\tau}}{g(\theta)}\,d\theta}\\
    &= \int_0^h{\PAIR{\psi(-\theta)}{g(\theta)}\,d\theta},
  \end{align*}
  so we conclude that
  \begin{displaymath}
    \PAIR{\SUN{\phi}}{\int_0^t{\SUNSTAR{T_0}(t - \tau)\ell f(\tau)\,d\tau}} = \PAIR{\psi}{\SUN{\phi}}.
  \end{displaymath}
  Since $\SUN{\phi}$ was arbitrary, this yields \cref{eq:duality:8}.
\item
  Let $M \ge 1$ and $\omega \in \RR$ be such that $\|S(t)\| \le M e^{\omega t}$ for all $t \ge 0$. Then for any $\theta \in [-h,0]$ we have
  \begin{align*}
    \|\psi(\theta)\| &\le \int_0^{(t + \theta)^+}{\|S(t - \tau + \theta)f(\tau)\|\,d\tau} \le M \sup_{0 \le \tau \le t}{\|f(\tau)\|} \int_0^t{e^{(t - \tau + \theta)\omega}\,d\tau}\\
    &= M\frac{e^{\omega\theta}(e^{\omega t} - 1)}{\omega} \sup_{0 \le \tau \le t}{\|f(\tau)\|} \le M \frac{e^{\omega t} - 1}{\omega} \sup_{0 \le \tau \le t}{\|f(\tau)\|},
  \end{align*}
  where in the last estimate a factor $e^{|\omega|h}$ was absorbed into $M$. Taking the supremum over all $\theta \in [-h,0]$ and using \cref{eq:duality:8} then gives \cref{eq:duality:18}. If $\omega = 0$ (i.e. the semigroup $S$ is bounded) then the factor $e^{\omega\theta}(e^{\omega t} - 1)\omega^{-1}$ must be replaced by $t$, which is just the limiting value of the factor $(e^{\omega t} - 1)\omega^{-1}$ in the final inequality. \qedhere
\end{steps}
\end{proof}

A function $u : \RR_+ \to X$ is called a {\bf solution} of \cref{eq:adde:2} if $u$ is continuous and it satisfies \cref{eq:adde:2}. As a consequence of \cref{eq:duality:18} we have the following result on the existence of a unique global solution of \cref{eq:adde:2}. The proof using Banach's fixed point theorem is entirely standard and will be omitted. 

\begin{corollary}
  \label{cor:duality:1}
  Let $F : X \to Y$ be globally Lipschitz continuous. For every initial condition $\phi \in X$ there exists a unique solution $u \in C(\RR_+,X)$ of \cref{eq:adde:2} with $T_0$ as in \cref{eq:adde:3} and $G \DEF \ell \circ F$.
\end{corollary}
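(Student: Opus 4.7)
The plan is to apply Banach's fixed point theorem to the Picard-type operator
\[
  (\Phi u)(t) \DEF T_0(t)\phi + j^{-1}\int_0^t{\SUNSTAR{T_0}(t - \tau)\ell F(u(\tau))\,d\tau}
\]
on a space of continuous $X$-valued functions on a bounded time interval $[0,T]$. For $\Phi$ to be well-defined on $C([0,T],X)$, one verifies two things. First, if $u \in C([0,T],X)$ then $F \circ u : [0,T] \to Y$ is continuous (since $F$ is Lipschitz, hence continuous), so \cref{prop:duality:1} guarantees that the \WSTAR-convolution integral lies in the range of $j$ and one may apply $j^{-1}$. Second, $t \mapsto (\Phi u)(t)$ is $X$-continuous: the $T_0(t)\phi$-term is continuous by strong continuity of the shift semigroup, while continuity of the $j^{-1}$-term follows from the explicit formula $j^{-1}\int_0^t \SUNSTAR{T_0}(t-\tau)\ell F(u(\tau))\,d\tau = \psi_t$ provided by \cref{prop:duality:1}, together with the dominated convergence theorem applied pointwise in $\theta$ to $\psi_t(\theta) = \int_0^{(t + \theta)^+}{S(t - \tau + \theta)F(u(\tau))\,d\tau}$ (and the uniform boundedness of $S$ on compact time intervals).

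Next I would establish the contraction estimate. By linearity of the \WSTAR-integral in its integrand and of $\ell$, for $u,v \in C([0,T],X)$ the difference $(\Phi u)(t) - (\Phi v)(t)$ equals $j^{-1}\int_0^t{\SUNSTAR{T_0}(t - \tau)\ell(F(u(\tau)) - F(v(\tau)))\,d\tau}$, so \cref{eq:duality:18} with $f = F\circ u - F \circ v$ combined with the Lipschitz constant $L$ of $F$ yields
\[
  \sup_{0 \le t \le T}\|(\Phi u)(t) - (\Phi v)(t)\| \le ML\frac{e^{\omega T} - 1}{\omega}\sup_{0 \le t \le T}\|u(t) - v(t)\|.
\]
Choosing $T > 0$ small enough that the prefactor is strictly less than one turns $\Phi$ into a strict contraction on the Banach space $C([0,T],X)$ endowed with the supremum norm, and Banach's fixed point theorem produces a unique local solution on $[0,T]$. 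Alternatively, a Bielecki-type weighted norm $\|u\|_\beta \DEF \sup_{t \in [0,T']}e^{-\beta t}\|u(t)\|$ would give a contraction on any prescribed interval $[0,T']$ directly.

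To extend to a global solution, I would use the semigroup structure: given the solution $u$ on $[0,T]$, solve the same fixed-point problem on $[T, 2T]$ with initial datum $u(T) \in X$, substituting $t - T$ for $t$ and using the semigroup property of $T_0$ and $\SUNSTAR{T_0}$ together with a change of variables in the convolution integral. Iterating this argument yields a unique solution $u \in C(\RR_+,X)$, with uniqueness on $\RR_+$ inherited from uniqueness on each step. Nothing in the argument is genuinely hard; the main subtlety that distinguishes this setting from the classical one is that the convolution integral lives a priori only in $\SUNSUN{X}$, so every invocation of $j^{-1}$ must be backed up by \cref{prop:duality:1} to ensure it is legitimate, but once that proposition is in hand the remainder reduces to the standard Picard--Banach scheme.
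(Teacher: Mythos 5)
Your proposal is correct and matches the paper's intent exactly: the paper states that ``the proof using Banach's fixed point theorem is entirely standard and will be omitted,'' and your Picard--Banach scheme, with well-definedness of $j^{-1}$ secured by \cref{prop:duality:1} and the contraction estimate coming from \cref{eq:duality:18}, is precisely that standard argument. The only cosmetic point is that continuity of $t \mapsto (\Phi u)(t)$ in the supremum norm of $X$ is most cleanly seen by noting $\psi_t(\theta) = w((t+\theta)^+)$ with $w(s) = \int_0^s S(s-\tau)F(u(\tau))\,d\tau$ continuous, rather than a pointwise-in-$\theta$ dominated convergence argument.
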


\begin{remark}
  Clearly a weaker, \emph{local} condition on $F$ yields a correspondingly weaker result about the existence of maximal \emph{local} solutions. More generally, here and in \cref{sec:adde:adde_as_aie} below there is no technical impediment to the consideration of local solutions of the nonlinear problems \cref{eq:adde:1,eq:adde:2} along the lines of \cite[Chapter VII]{Diekmann1995}, but purely for simplicity we restrict our attention to global solutions instead. \hfill \QEDEX
\end{remark}

\section{Abstract DDEs as abstract integral equations}\label{sec:adde:adde_as_aie}
Here we explain in detail the relationship between \cref{eq:adde:1} and \cref{eq:adde:2}. Throughout we assume that $T_0$ is the shift semigroup given by \cref{eq:adde:3} and $G \DEF \ell \circ F$ with $F : X \to Y$ as in \cref{eq:adde:1} a continuous function and $\ell$ defined by \cref{eq:duality:19}. 
\begin{definition}
  \label{def:adde_as_aie:1}
  A function $x : [-h,\infty) \to Y$ is called a {\bf classical solution} of \cref{eq:adde:1} if $x$ is continuous on $[-h,\infty)$, continuously differentiable on $\RR_+$, $x(t) \in \DOM(B)$ for all $t \ge 0$ and $x$ satisfies \cref{eq:adde:1}. \hfill \QEDEX
\end{definition}

\begin{lemma}
  \label{lem:adde_as_aie:3}
  Let $x \in C([-h,\infty),Y)$. Then the map $t \mapsto x_t$ is in $C(\RR_+,X)$.
\end{lemma}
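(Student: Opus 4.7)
The plan is to reduce the claim to the uniform continuity of $x$ on compact subintervals of $[-h,\infty)$. Fix $t_0 \in \RR_+$ and $\EPS > 0$. The goal is to produce $\delta > 0$ such that $|t - t_0| < \delta$ with $t \in \RR_+$ forces $\|x_t - x_{t_0}\|_X \le \EPS$, where by definition
\[
  \|x_t - x_{t_0}\|_X = \sup_{\theta \in [-h,0]} \|x(t + \theta) - x(t_0 + \theta)\|.
\]

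First I would restrict to $|t - t_0| \le 1$, so that for all $\theta \in [-h,0]$ the points $t + \theta$ and $t_0 + \theta$ lie in the compact interval $K \DEF [-h, t_0 + 1] \subset [-h,\infty)$. Since $x$ is continuous on $[-h,\infty)$, its restriction to $K$ is uniformly continuous, so there exists $\delta \in (0,1]$ such that
\[
  s, s' \in K, \ |s - s'| < \delta \quad \implies \quad \|x(s) - x(s')\| < \EPS.
\]
For $|t - t_0| < \delta$ and any $\theta \in [-h,0]$ we have $|(t + \theta) - (t_0 + \theta)| = |t - t_0| < \delta$ with both arguments in $K$, so $\|x(t + \theta) - x(t_0 + \theta)\| < \EPS$. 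Taking the supremum over $\theta$ gives $\|x_t - x_{t_0}\|_X \le \EPS$, proving continuity of $t \mapsto x_t$ at $t_0$.

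The only subtlety is ensuring that $x_t \in X$ in the first place, but this is immediate: $\theta \mapsto x(t + \theta)$ is a continuous $Y$-valued function on $[-h,0]$ as a composition of continuous maps. There is essentially no obstacle here; the argument is just the standard ``continuity on $[-h,\infty)$ plus compactness yields uniform continuity'' reduction.
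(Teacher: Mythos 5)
Your argument is correct and is essentially identical to the paper's own proof: both restrict to $|t - t_0| \le 1$, invoke uniform continuity of $x$ on the compact interval $[-h, t_0 + 1]$, and take the supremum over $\theta \in [-h,0]$. No gaps.
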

\begin{proof}
  Let $t \ge 0$ and $\EPS > 0$ be arbitrary. Then for any $s \ge 0$ with $|t - s| \le 1$ we have $[s-h,s] \subseteq I \DEF [-h,t + 1]$. Of course also $[t - h,t] \subseteq I$. Since $x$ is uniformly continuous on $I$ there exists $\delta \in (0,1)$ such that $|t - s| \le \delta$ implies $\|x(t + \theta) - x(s + \theta)\| \le \EPS$ for all $\theta \in [-h,0]$. Hence if $|t - s| \le \delta$ then
    \begin{displaymath}
      \|x_t - x_s\| = \sup_{\theta \in [-h,0]}{\|x(t+\theta) - x(s+\theta)\|} \le \EPS,
    \end{displaymath}
    which proves continuity of $u$ at $t$.
\end{proof}
\begin{lemma}
  \label{lem:adde_as_aie:1}
  Let $x$ be a classical solution of \cref{eq:adde:1}. Then
  \begin{equation}
    \label{eq:adde_as_aie:2}
    x(t) = S(t)\phi(0) + \int_0^t{S(t - \tau)F(x_{\tau})\,d\tau}, \qquad \forall\,t \ge 0,
  \end{equation}
  where the integral is of the Riemann type.
\end{lemma}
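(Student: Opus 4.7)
The plan is to apply a standard variation-of-constants argument. Because $x$ is a classical solution, $x(t)\in\DOM(B)$ for every $t\ge 0$, the map $t\mapsto x(t)$ is $C^1$ on $\RR_+$, and $\dot x(t)=Bx(t)+F(x_t)$. By \cref{lem:adde_as_aie:3} the map $t\mapsto x_t$ is continuous, and composition with the continuous $F$ shows that $\tau\mapsto F(x_\tau)$ is a continuous $Y$-valued function, hence Riemann integrable on every compact interval. This takes care of the meaning of the right-hand side.

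The key computation is to fix $t>0$ and differentiate the auxiliary map
\[
 u : [0,t]\to Y, \qquad u(\tau)\DEF S(t-\tau)\,x(\tau).
\]
Since $x(\tau)\in\DOM(B)$ and $\tau\mapsto x(\tau)$ is $C^1$, the standard product rule for \C{0}-semigroups (see \cite[Lemma II.1.3]{Engel2000} or the chain rule applied to $(\sigma,\tau)\mapsto S(\sigma)x(\tau)$) yields differentiability of $u$ with
\[
 u'(\tau) \;=\; -\,S(t-\tau)B\,x(\tau) + S(t-\tau)\,\dot x(\tau) \;=\; S(t-\tau)\bigl(\dot x(\tau) - Bx(\tau)\bigr) \;=\; S(t-\tau)F(x_\tau),
\]
where the last equality uses \cref{eq:adde:dde}. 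Moreover $u'$ is continuous on $[0,t]$ because $S(\cdot)$ is strongly continuous on compact sets and $\tau\mapsto F(x_\tau)$ is continuous.

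Integrating from $0$ to $t$ by the fundamental theorem of calculus for Banach-space-valued functions then gives
\[
 u(t)-u(0) \;=\; x(t) - S(t)x(0) \;=\; \int_0^t S(t-\tau)F(x_\tau)\,d\tau,
\]
and inserting the initial condition $x(0)=\phi(0)$ from \cref{eq:adde:ic} yields \cref{eq:adde_as_aie:2}. The integrand is continuous in $\tau$, so the integral may be understood as a Riemann integral, as claimed.

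The only real technical point is the product-rule differentiation of $u$; this is where the hypotheses $x(\tau)\in\DOM(B)$ and $x\in C^1(\RR_+,Y)$ in \cref{def:adde_as_aie:1} are essential, since for a general \C{0}-semigroup that is neither analytic nor eventually differentiable, one cannot obtain such a formula without first placing the trajectory in the domain of the generator. Everything else is routine.
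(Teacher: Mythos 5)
Your proposal is correct and follows essentially the same route as the paper: both define the auxiliary function $\tau \mapsto S(t-\tau)x(\tau)$, differentiate it using that $x(\tau) \in \DOM(B)$ and $x \in C^1$, substitute the equation to get the derivative $S(t-\tau)F(x_\tau)$, and integrate. The only cosmetic difference is that you cite the product rule for \C{0}-semigroups where the paper carries out the difference-quotient computation explicitly.
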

\begin{proof}
  For $t = 0$ the statement is clear. Assume $t > 0$ and define $v : [0,t] \to Y$ by $v(\tau) \DEF S(t - \tau)x(\tau)$. We claim that $v$ is differentiable with derivative
  \begin{equation}
    \label{eq:adde_as_aie:1}
    \dot{v}(\tau) = S(t - \tau)\dot{x}(\tau) - S(t - \tau)Bx(\tau), \qquad \forall\, \tau \in [0,t].
  \end{equation}
  (Of course, this is suggested by a formal application of the product rule.) Indeed, for $\tau \in [0,t]$ and $0 \neq \delta \in \RR$ such that $\tau + \delta \in [0,t]$ we have
  \begin{align*}
    \frac{1}{\delta}[S(t - (\tau + \delta))x(\tau +\delta) - S(t - \tau)x(\tau)] &= S(t - (\tau + \delta)) \frac{1}{\delta}[x(\tau + \delta) - x(\tau)]\\
    &+ \frac{1}{\delta}[S(t - (\tau+\delta)) - S(t - \tau)]x(\tau).
  \end{align*}
  By the strong continuity of $S$ and the differentiability of $x$ the first term on the right converges to $S(t - \tau)\dot{x}(\tau)$ as $\delta \to 0$. Since $x(\tau) \in \DOM(B)$ by assumption we see that the second term tends to $-S(t - \tau)Bx(\tau)$ as $\delta \to 0$. This proves that $v$ is differentiable at $\tau$ and \cref{eq:adde_as_aie:1} holds. Substituting for $\dot{x}$ from \cref{eq:adde:1} then yields
  \begin{displaymath}
    \dot{v}(\tau) = S(t - \tau)Bx(\tau) + S(t - \tau)F(x_\tau) - S(t - \tau)Bx(\tau) = S(t - \tau)F(x_\tau),
  \end{displaymath}
  and $\dot{v}$ is continuous by \cref{lem:adde_as_aie:3}. Hence
  \begin{displaymath}
    x(t) - S(t)\phi(0) = v(t) - v(0) = \int_0^t{\dot{v}(\tau)\,d\tau} = \int_0^t{S(t - \tau)F(x_\tau)\,d\tau},
  \end{displaymath}
  as promised.
\end{proof}
\Cref{def:adde_as_aie:1} is quite restrictive. For example, only initial conditions $\phi \in X$ with $\phi(0) \in \DOM(B)$ are admissible. It is therefore useful to follow \cite{Travis1974} and \cite{Wu1996} in introducing a weaker solution concept, motivated by \cref{lem:adde_as_aie:1}.
\begin{definition}
  \label{def:adde_as_aie:2}
  A function $x \in C([-h,\infty), Y)$ is called a {\bf mild solution} of \cref{eq:adde:1} if $x_0 = \phi$ and $x$ satisfies \cref{eq:adde_as_aie:2}. \hfill \QEDEX
\end{definition}
By \cref{lem:adde_as_aie:3} the continuity of $x$ in the above definition ensures that the Riemann-integral in the right-hand side of \cref{eq:adde_as_aie:2} is well-defined. 
It is legitimate to ask when a mild solution of \cref{eq:adde:1} is also a classical solution. For this we refer to \cite[Proposition 2.3]{Travis1974} and \cite[Theorem 2.1.4 and Remark 2.1.5]{Wu1996}. However, one particular case is of special relevance in view of some of the examples we have in mind \cite{VanGils2013}. 
\begin{proposition}
  \label{prop:adde_as_aie:2}
  Any mild solution of \cref{eq:adde:1} with $B=0$ is a classical solution.
\end{proposition}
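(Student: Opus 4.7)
The plan is to exploit that when $B=0$ the semigroup $S$ is the identity and $\DOM(B) = Y$, so the only nontrivial requirement for $x$ to be classical is continuous differentiability on $\RR_+$ together with the derivative identity.

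First I would specialize \cref{eq:adde_as_aie:2} to $B=0$ to obtain
\[
  x(t) = \phi(0) + \int_0^t{F(x_\tau)\,d\tau}, \qquad \forall\,t \ge 0,
\]
where the integral is a $Y$-valued Riemann integral. Because $x \in C([-h,\infty),Y)$ by the definition of a mild solution, \cref{lem:adde_as_aie:3} gives that $t \mapsto x_t$ is continuous from $\RR_+$ to $X$. Composing with the continuous nonlinearity $F : X \to Y$, the integrand $t \mapsto F(x_t)$ is continuous on $\RR_+$.

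The fundamental theorem of calculus for Riemann integrals of continuous Banach-space-valued functions then implies that the integral is continuously differentiable on $\RR_+$ with derivative $F(x_t)$. Hence $x$ itself is continuously differentiable on $\RR_+$, and
\[
  \dot{x}(t) = F(x_t) = 0 \cdot x(t) + F(x_t) = Bx(t) + F(x_t), \qquad \forall\,t \ge 0.
\]
Since $B=0$ has domain $\DOM(B) = Y$, the membership condition $x(t) \in \DOM(B)$ is automatic, and $x_0 = \phi$ holds by assumption on the mild solution. All conditions in \cref{def:adde_as_aie:1} are therefore satisfied.

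There is no real obstacle here: the proof essentially boils down to applying \cref{lem:adde_as_aie:3} together with continuity of $F$ to promote the integrand from merely integrable to continuous, at which point standard differentiation under the integral (i.e., the vector-valued fundamental theorem of calculus) closes the argument. The only thing worth noting is that for $t = 0$ the derivative is interpreted as a one-sided (right) derivative, which is automatic from the continuity of the integrand at $0$.
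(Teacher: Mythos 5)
Your proof is correct and follows the same route as the paper: specialize \cref{eq:adde_as_aie:2} to $S \equiv I$ and differentiate. The paper compresses the final step into one sentence, whereas you spell out the (correct) justification via \cref{lem:adde_as_aie:3}, continuity of $F$, and the vector-valued fundamental theorem of calculus.
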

\begin{proof}
  Let $x$ be a mild solution. Since $S \equiv I$ (the trivial \C{0}-semigroup consisting of the identity on $Y$) it follows that
  \begin{displaymath}
    x(t) = \phi(0) + \int_0^t{F(x_{\tau})\,d\tau}, \qquad \forall\,t \ge 0.
  \end{displaymath}
  Differentiation with respect to $t$ then yields the result.
\end{proof}
We now give a one-to-one correspondence between solutions of \cref{eq:adde:2} and mild solutions of \cref{eq:adde:1}.

\begin{theorem}
  \label{thm:adde_as_aie:1}
  Let $T_0$ be the shift semigroup given by \cref{eq:adde:3} and define $G \DEF \ell \circ F$ with $F : X \to Y$ continuous and $\ell$ as in \cref{eq:duality:19}. Let $\phi \in X$ be an initial condition. The following statements hold.
  \begin{enumerate}
  \item[\textup{(i)}] 
    Suppose that $x$ is a mild solution of \cref{eq:adde:1}. Define $u : \RR_+ \to X$ by 
    \begin{displaymath}
      u(t) \DEF x_t, \qquad \forall\,t \ge 0.
    \end{displaymath}
    Then $u$ is a solution of \cref{eq:adde:2}.
  \item[\textup{(ii)}] 
    Suppose that $u$ is a solution of \cref{eq:adde:2}. Define $x : [-h,\infty) \to Y$ by
    \begin{displaymath}
      x(t) \DEF
      \begin{cases}
        \phi(t),& -h \le t \le 0,\\
        u(t)(0),&\hphantom{-} 0 \le t.
      \end{cases}
    \end{displaymath}
    Then $x$ is a mild solution of \cref{eq:adde:1}.
  \end{enumerate}
\end{theorem}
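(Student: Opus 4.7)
The plan is to prove both directions by expanding \cref{eq:adde:2} pointwise in $\theta \in [-h,0]$, using \cref{eq:adde:3} for the shift semigroup $T_0$ together with \cref{prop:duality:1} to evaluate the {\WSTAR}-convolution integral. Once this rewrite is performed, \cref{eq:adde:2} splits into two regimes depending on the sign of $t+\theta$, and in each regime matching the mild solution equation \cref{eq:adde_as_aie:2} with the initial condition $x_0 = \phi$ becomes straightforward.

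For part (i), the map $u : t \mapsto x_t$ is in $C(\RR_+,X)$ by \cref{lem:adde_as_aie:3}, so the composition $\tau \mapsto F(x_\tau)$ is continuous into $Y$ and \cref{prop:duality:1} applies with $f(\tau) \DEF F(x_\tau)$. It gives
\[
j^{-1}\int_0^t \SUNSTAR{T_0}(t-\tau)\ell F(x_\tau)\,d\tau = \psi,\qquad \psi(\theta) = \int_0^{(t+\theta)^+} S(t-\tau+\theta) F(x_\tau)\,d\tau.
\]
Evaluating at $\theta \in [-h,0]$: when $t+\theta \le 0$ the integral is empty and $[T_0(t)\phi](\theta) = \phi(t+\theta) = x(t+\theta)$; when $t+\theta \ge 0$ we have $[T_0(t)\phi](\theta) = S(t+\theta)\phi(0)$ and adding $\psi(\theta)$ reproduces $x(t+\theta)$ via \cref{eq:adde_as_aie:2} at time $t+\theta$. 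In both regimes the right-hand side of \cref{eq:adde:2} evaluated at $\theta$ equals $x(t+\theta) = u(t)(\theta)$.

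For part (ii), first I would note that $x$ is continuous on $[-h,0]$ and on $\RR_+$ separately; evaluating \cref{eq:adde:2} at $t=0$ gives $u(0)=\phi$, whence $u(0)(0) = \phi(0)$ and $x$ is continuous at zero. The key step is the identification $u(t) = x_t$ for all $t \ge 0$; granting this, evaluation of \cref{eq:adde:2} at $\theta = 0$ (again rewritten via \cref{prop:duality:1}) is precisely \cref{eq:adde_as_aie:2} for $x$. To prove the identification, apply \cref{prop:duality:1} with $f(\tau) \DEF F(u(\tau))$, which is continuous into $Y$ because $u \in C(\RR_+,X)$ and $F$ is continuous. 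One obtains for each $\theta \in [-h,0]$
\[
u(t)(\theta) = [T_0(t)\phi](\theta) + \int_0^{(t+\theta)^+} S(t-\tau+\theta) F(u(\tau))\,d\tau.
\]
When $t+\theta \le 0$ this reduces to $\phi(t+\theta) = x(t+\theta)$; when $t+\theta \ge 0$ it equals $S(t+\theta)\phi(0) + \int_0^{t+\theta} S(t+\theta-\tau) F(u(\tau))\,d\tau$, which is the analogous formula for $u(s)(0)$ at $s \DEF t+\theta$, hence equals $u(t+\theta)(0) = x(t+\theta) = x_t(\theta)$.

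I expect the main obstacle to be precisely this self-referential identification $u(t)(\theta) = u(t+\theta)(0)$ in part (ii), which is what lets the history-valued integral equation \cref{eq:adde:2} encode the scalar-valued history problem \cref{eq:adde:1}. It is resolved by the structural observation that, for $t+\theta \ge 0$, the pointwise formula delivered by \cref{prop:duality:1} for $u(t)(\theta)$ is literally the same formula as the one for $u(s)(0)$ at $s=t+\theta$, so no additional fixed-point argument is required beyond what is already implicit in \cref{eq:adde:2}.
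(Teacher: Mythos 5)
Your proposal is correct and follows essentially the same route as the paper: both directions are handled by using \cref{prop:duality:1} to evaluate the convolution integral pointwise in $\theta$, splitting into the regimes $t+\theta\le 0$ and $t+\theta\ge 0$, and in part (ii) the identification $u(t)(\theta)=u(t+\theta)(0)$ for $t+\theta\ge 0$ is exactly the paper's argument for $u(\tau)=x_\tau$. No gaps.
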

\begin{proof}
  We prove (i). If we apply \cref{eq:duality:8} in \cref{prop:duality:1} with $f(\tau) = F(u(\tau))$ then we find for all $t \ge 0$ and $\theta \in [-h,0]$ that
  \begin{displaymath}
    (T_0(t)\phi)(\theta) + j^{-1}\Bigl(\int_0^t{\SUNSTAR{T_0}(t - \tau)\ell F(u(\tau))\,d\tau} \Bigr)(\theta) = \phi(t + \theta) = x(t + \theta) = u(t)(\theta),
  \end{displaymath}
  if $-h \le t + \theta \le 0$ while
  \begin{align*}
    (T_0(t)\phi)(\theta) &+ j^{-1}\Bigl(\int_0^t{\SUNSTAR{T_0}(t - \tau)\ell F(u(\tau))\,d\tau} \Bigr)(\theta)\\
    &=  S(t + \theta)\phi(0) + \int_0^{t + \theta}{S(t - \tau + \theta)F(u(\tau))\,d\tau} = x(t + \theta) = u(t)(\theta),
  \end{align*}
  if $0 \le t + \theta$. Hence $u$ is a solution of \cref{eq:adde:2}.
  
  We prove (ii). 
  \begin{steps}
  \item
    It is clear that $x$ is continuous on $[-h,0]$. Since point evaluation acts continuously on elements of $X$, $x$ is also continuous on $\RR_+$. Since $u(0)(0) = \phi(0)$ it follows that $x$ is continuous on $[-h,\infty)$. We need to show that $x$ satisfies \cref{eq:adde_as_aie:2}. By \cref{prop:duality:1} we find that
    \begin{align}
      x(t) = u(t)(0) &= (T_0(t)\phi)(0) + j^{-1}\Bigl(\int_0^t{\SUNSTAR{T_0}(t - \tau)\ell F(u(\tau))\,d\tau} \Bigr)(0)\nonumber\\
      &= S(t)\phi(0) + \int_0^t{S(t - \tau)F(u(\tau))\,d\tau}.\label{eq:adde_as_aie:3}
    \end{align}
  \item
    By \cref{eq:adde_as_aie:3} we are done once we prove that $u(\tau) = x_{\tau}$ for all $\tau \ge 0$. Let $\theta \in [-h,0]$ be arbitrary. First consider the case $-h \le \tau + \theta \le 0$. Then
    \begin{displaymath}
      x_{\tau}(\theta) = x(\tau + \theta) = \phi(\tau + \theta) = (T_0(\tau)\phi)(\theta) = u(\tau)(\theta),
    \end{displaymath}
    by \cref{prop:duality:1}. Next, consider the case that $0 \le \tau + \theta$. Then
    \begin{align*}
      x_{\tau}(\theta) = x(\tau + \theta) &= u(\tau +\theta)(0)\\
      &= (T_0(\tau + \theta)\phi)(0) + j^{-1}\Bigl(\int_0^{\tau + \theta}{\SUNSTAR{T_0}(\tau + \theta - s)\ell F(u(s))\,ds} \Bigr)(0)\\
      &= S(\tau + \theta)\phi(0) + \int_0^{\tau + \theta}{S(\tau + \theta - s)F(u(s))\,ds}\\
      &= (T_0(\tau)\phi)(\theta) + j^{-1}\Bigl(\int_0^{\tau}{\SUNSTAR{T_0}(\tau - s)\ell F(u(s))\,ds} \Bigr)(\theta)\\
      &= u(\tau)(\theta),
    \end{align*}
    where \cref{prop:duality:1} was used twice. \qedhere
\end{steps}
\end{proof}
In view of \cref{prop:adde_as_aie:2,thm:adde_as_aie:1} there is a one-to-one correspondence between {\it classical} solutions of \cref{eq:adde:1} with $B = 0$ and solutions of \cref{eq:adde:2}. Also, together with \cref{cor:duality:1} we obtain
\begin{corollary}
  \label{cor:adde_as_aie:1}
  Let $F : X \to Y$ be globally Lipschitz continuous. For every initial condition $\phi \in X$ there exists a unique mild solution of \cref{eq:adde:1}.
\end{corollary}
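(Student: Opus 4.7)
The plan is to combine \cref{cor:duality:1} with the correspondence established in \cref{thm:adde_as_aie:1}. Since $F$ is globally Lipschitz continuous and $\ell \in \BND(Y,\SUNSTAR{X})$ is bounded (by \cref{lem:duality:8}), the composition $G \DEF \ell \circ F$ is globally Lipschitz from $X$ into $\SUNSTAR{X}$. Hence the hypotheses of \cref{cor:duality:1} are met, yielding for the given initial condition $\phi \in X$ a unique solution $u \in C(\RR_+, X)$ of \cref{eq:adde:2}.

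For existence of a mild solution, I would then feed this $u$ into part (ii) of \cref{thm:adde_as_aie:1}: the function $x : [-h,\infty) \to Y$ defined by $x(\theta) = \phi(\theta)$ for $\theta \in [-h,0]$ and $x(t) = u(t)(0)$ for $t \ge 0$ is a mild solution of \cref{eq:adde:1} in the sense of \cref{def:adde_as_aie:2}.

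For uniqueness, suppose that $x$ and $\tilde{x}$ are both mild solutions of \cref{eq:adde:1} with the same initial condition $\phi$. By part (i) of \cref{thm:adde_as_aie:1}, the maps $u(t) \DEF x_t$ and $\tilde{u}(t) \DEF \tilde{x}_t$ are then both solutions of \cref{eq:adde:2} with the same $\phi$. The uniqueness part of \cref{cor:duality:1} forces $u = \tilde{u}$ on $\RR_+$, so in particular $x(t) = u(t)(0) = \tilde{u}(t)(0) = \tilde{x}(t)$ for all $t \ge 0$, while $x$ and $\tilde{x}$ agree with $\phi$ on $[-h,0]$ by definition of a mild solution. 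Hence $x = \tilde{x}$ on $[-h,\infty)$.

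There is essentially no main obstacle to overcome here: the entire content has already been extracted into \cref{cor:duality:1} (fixed-point argument based on the estimate \cref{eq:duality:18}) and into the bijection set up in \cref{thm:adde_as_aie:1}. The only minor verification is that composing with the bounded linear map $\ell$ preserves the global Lipschitz property of $F$ so that \cref{cor:duality:1} applies, which is immediate.
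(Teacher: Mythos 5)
Your proposal is correct and follows exactly the route the paper intends: the paper states \cref{cor:adde_as_aie:1} as an immediate consequence of combining \cref{cor:duality:1} with the one-to-one correspondence of \cref{thm:adde_as_aie:1}, which is precisely what you spell out. Your explicit uniqueness argument (passing both mild solutions through part (i) and invoking uniqueness for \cref{eq:adde:2}) is a faithful elaboration of the step the paper leaves implicit.
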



\section{Linear perturbations without sun-reflexivity}\label{sec:sunstar:linear}
Let $T_0$ be a \C{0}-semigroup on a real or complex Banach space and let $A_0$ be its generator. Clearly we have in mind the shift semigroup from \cref{eq:adde:3} but the results in this section have a general validity. We are interested in perturbations of $T_0$ by bounded linear operators on $X$ with values in the larger space $\SUNSTAR{X}$ in which $X$ lies embedded by $j : X \to \SUNSTAR{X}$. For the case that $X$ is sun-reflexive with respect to $T_0$ this problem is discussed in detail in \cite{Clement1987} and \cite[Section III.2]{Diekmann1995}. However, the assumption of sun-reflexivity may be too restrictive, as shown by the class \cref{eq:adde:1} of abstract DDEs. So, we reformulate a subset of the results from \cite{Clement1987,Diekmann1995} without assuming sun-reflexivity. 

The lack of sun-reflexivity is felt immediately when we review the first result of \cite[\S III.2]{Diekmann1995}. Its proof still applies verbatim, except for the final step. Namely, without sun-reflexivity all we can say is that $w$ takes values in $\SUNSUN{X}$. 
\begin{lemma}[cf. {\cite[Lemma III.2.1]{Diekmann1995}}] 
  \label{lem:sunstar:5}
  Let $f : \RR_+ \to \SUNSTAR{X}$ be continuous. Define the set
  \[
    \Omega \DEF \{(t,s,r) \in \RR^3\,:\,0 \le r \le s \le t\},
  \]
  and define $w : \Omega \to \SUNSTAR{X}$ by
  \begin{equation}
    \label{eq:sunstar:19}
    w(t,s,r) \DEF \int_r^s{\SUNSTAR{T_0}(t - \tau)f(\tau)\,d\tau}.
  \end{equation}
  Then $w$ is continuous and takes values in $\SUNSUN{X}$.
\end{lemma}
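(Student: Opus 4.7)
The plan is to handle the two claims together: first establishing that $w$ takes values in $\SUNSUN{X}$, then using this to derive continuity. The key structural observation is the semigroup identity
\[
w(t,s,r) = \SUNSTAR{T_0}(t-s)\, v(s,r), \qquad v(s,r) \DEF \int_r^s \SUNSTAR{T_0}(s-\tau) f(\tau)\,d\tau,
\]
which is valid because $\SUNSTAR{T_0}(t-s)$ is weak$^\ast$-to-weak$^\ast$ continuous (being the adjoint of $\SUN{T_0}(t-s)$) and hence commutes with the weak$^\ast$-Riemann integral.

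To prove $v(s,r) \in \SUNSUN{X}$ --- and therefore $w(t,s,r) = \SUNSUN{T_0}(t-s) v(s,r) \in \SUNSUN{X}$ --- I would verify that $\|\SUNSTAR{T_0}(h) v(s,r) - v(s,r)\| \to 0$ as $h \downarrow 0$, using the characterisation of $\SUNSUN{X}$ as the maximal subspace of strong continuity of $\SUNSTAR{T_0}$. Commuting $\SUNSTAR{T_0}(h)$ inside the integral and substituting $\tau' = \tau - h$ yields
\[
\SUNSTAR{T_0}(h) v(s,r) = \int_{r-h}^{s-h} \SUNSTAR{T_0}(s - \tau) f(\tau + h)\,d\tau.
\]
Subtracting $v(s,r)$ and splitting about the overlap $[r, s-h]$ gives boundary terms over $[r-h, r]$ and $[s-h, s]$ --- each an integral of a bounded integrand over an interval of length $h$, hence $O(h)$ in norm --- plus a main term $\int_r^{s-h} \SUNSTAR{T_0}(s - \tau)[f(\tau+h) - f(\tau)]\,d\tau$, bounded by $M(s-r)\,\omega_f(h)$, where $\omega_f$ is the modulus of continuity of $f$ on $[0, s+1]$. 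Here I use the norm bound $\|\int_a^b g(\tau)\,d\tau\| \le \int_a^b \|g(\tau)\|\,d\tau$ for the weak$^\ast$-Riemann integral (which holds by weak$^\ast$-lower semicontinuity of the norm) together with uniform boundedness of $\SUNSTAR{T_0}$ on compact time intervals. All three terms vanish as $h \downarrow 0$.

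Crucially, the same change-of-variables estimate gives, \emph{uniformly in $s'$ in a neighborhood of $s$}, the bound $\|[\SUNSTAR{T_0}(|s'-s|) - I] v(s', r)\| \to 0$ as $s' \to s$. Combined with the elementary $\|v(s, r) - v(s, r_0)\| \le C|r - r_0|$ and the splitting $v(s', r) - v(s, r) = \pm [\SUNSTAR{T_0}(|s' - s|) - I] v(s \wedge s', r) + O(|s' - s|)$ obtained from the semigroup identity (the sign depending on whether $s' > s$ or $s' < s$), this shows $v \colon \{(s,r) \,:\, 0 \le r \le s\} \to \SUNSUN{X}$ is norm continuous. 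Joint continuity of the \C{0}-semigroup action $(u, x) \mapsto \SUNSUN{T_0}(u) x$ on $\RR_+ \times \SUNSUN{X}$ then transports this to continuity of $w$ on $\Omega$.

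The principal obstacle, relative to the sun-reflexive case, is that $\SUNSTAR{T_0}$ is only weak$^\ast$-continuous at zero on general elements of $\SUNSTAR{X}$, so pointwise strong continuity of $\SUNSUN{T_0}$ applied to the fixed element $v(s,r)$ does not suffice to handle the left limit $s' \uparrow s$: one must bound $[\SUNSTAR{T_0}(h) - I] v(s', r)$ uniformly as both $h \downarrow 0$ and $s' \to s$, and it is precisely the explicit change-of-variables bound that delivers this uniformity.
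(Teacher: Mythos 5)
Your argument is correct and is essentially the proof the paper relies on: the paper offers no independent argument but defers to \cite[Lemma III.2.1]{Diekmann1995}, whose proof is exactly your change-of-variables estimate (two boundary terms of order $h$ plus a main term controlled by the modulus of continuity of $f$ and the local uniform bound on $\SUNSTAR{T_0}$), with only the final step weakened so that strong continuity of $h \mapsto \SUNSTAR{T_0}(h)w(t,s,r)$ yields membership in $\SUNSUN{X}$ rather than in $j(X)$. Your reorganization --- factoring $w(t,s,r) = \SUNSTAR{T_0}(t-s)v(s,r)$, proving $v(s,r) \in \SUNSUN{X}$ first, and then feeding the uniform version of the same estimate back into the continuity proof to handle the left limit $s' \uparrow s$ --- is a cosmetic rearrangement of that argument, not a different route.
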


In order to proceed, we therefore impose the following condition as a weaker substitute for the hypothesis of sun-reflexivity of $X$ with respect to $T_0$. 
\vskip 1ex
\newcommand{\HNULL}{\hyperref[H0]{\textup{(H0)}}}
\label{H0}
\noindent {\bf (H0)} The semigroup $T_0$ and the operator $L \in \BND(X,\SUNSTAR{X})$ have the property that
\begin{displaymath}
  \int_0^t{\SUNSTAR{T_0}(t - \tau)Lu(\tau)\,d\tau} \in j(X),
\end{displaymath}
for all continuous functions $u : \RR_+ \to X$ and all $t \ge 0$. \hfill \QEDEX
\vskip 1ex

We note that \cref{lem:sunstar:5} implies that {\HNULL} is satisfied when $X$ is sun-reflexive with respect to $T_0$. More generally, most results from \cite[\S III.2]{Diekmann1995} continue to hold upon imposing {\HNULL}. In particular, perturbation of $T_0$ by a compliant operator $L \in \BND(X,\SUNSTAR{X})$ yields a perturbed \C{0}-semigroup on $X$. Namely,
\begin{theorem}[cf. {\cite[Theorem III.2.4]{Diekmann1995}}]
  \label{thm:sunstar:1}
  If $T_0$ and $L$ satisfy {\HNULL} then there exists a unique \C{0}-semigroup $T$ on $X$ such that
  \begin{equation}
    \label{eq:sunstar:1}
    T(t)\phi = T_0(t)\phi + j^{-1}\int_0^t{\SUNSTAR{T_0}(t - \tau)LT(\tau)\phi\,d\tau},
  \end{equation}
  for all $\phi \in X$ and for all $t \ge 0$.
\end{theorem}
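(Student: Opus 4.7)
The plan is a Banach fixed point construction of the orbit $t \mapsto T(t)\phi$ on short time intervals, followed by gluing to a global solution, and finally a uniqueness-based derivation of the semigroup law. Throughout, \HNULL is precisely the mechanism that legitimizes the application of $j^{-1}$ on the right-hand side of \cref{eq:sunstar:1}.

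First I would fix $\phi \in X$ and, for a small $t_0 > 0$ to be chosen, define the map $K_\phi : C([0,t_0],X) \to C([0,t_0],X)$ by
\[
(K_\phi u)(t) \DEF T_0(t)\phi + j^{-1}\int_0^t \SUNSTAR{T_0}(t-\tau) L u(\tau)\,d\tau.
\]
Well-definedness splits into two checks. The convolution integral lies in $j(X)$ by \HNULL, so $j^{-1}$ applies pointwise in $t$. To see that $K_\phi u$ is continuous as a map into $X$, one combines strong continuity of $T_0$, continuity of the integral in $\SUNSTAR{X}$ (supplied by \cref{lem:sunstar:5}), and the fact that $j$ is an isometric embedding of $X$ onto a closed subspace of $\SUNSUN{X}$, so that $j^{-1}$ is isometric on its domain.

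Next I would establish contractivity on $[0,t_0]$. Exploiting the isometry of $j$ and the standard estimate $\|\SUNSTAR{T_0}(t)\| \le \|T_0(t)\| \le M e^{\omega t}$, a routine computation yields
\[
\sup_{t \in [0,t_0]}\|(K_\phi u)(t) - (K_\phi v)(t)\|_X \le M e^{|\omega| t_0}\|L\|\,t_0 \sup_{t \in [0,t_0]}\|u(t) - v(t)\|_X.
\]
Choosing $t_0$ so small that the prefactor is strictly less than $1$ (crucially, $t_0$ depends on $M$, $\omega$ and $\|L\|$ only, not on $\phi$), Banach's fixed point theorem produces a unique solution $u_\phi \in C([0,t_0],X)$ of \cref{eq:sunstar:1} on $[0,t_0]$. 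Iterating the construction on $[k t_0, (k+1)t_0]$ with initial condition $u_\phi(k t_0)$ extends $u_\phi$ uniquely to a continuous function $\RR_+ \to X$. Setting $T(t)\phi \DEF u_\phi(t)$, linearity of $T(t)$ in $\phi$ is forced by uniqueness together with the linearity of the defining equation in $(\phi,u)$; a Gronwall argument applied to $\|T(t)\phi\|$ yields an exponential bound of the form $\|T(t)\| \le M e^{(\omega + M\|L\|)t}$, so $T(t) \in \BND(X,X)$. For the semigroup law I would fix $s \ge 0$ and note that both $t \mapsto T(t+s)\phi$ and $t \mapsto T(t)T(s)\phi$, regarded as continuous $X$-valued maps on $\RR_+$, satisfy \cref{eq:sunstar:1} with initial datum $T(s)\phi$; the uniqueness statement of the fixed-point step then forces the two to coincide. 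Strong continuity at $t=0$ and $T(0) = \ID$ are immediate from the integral equation evaluated at $t = 0$, and uniqueness of the full semigroup satisfying \cref{eq:sunstar:1} is just uniqueness of the fixed point on each subinterval.

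The principal obstacle is conceptual rather than computational: in the non-sun-reflexive regime $\SUNSTAR{X}$ strictly contains $j(X)$, so a priori the convolution integral only lives in the ambient space $\SUNSUN{X}$, and without \HNULL the scheme cannot even be phrased on $C([0,t_0],X)$. Once \HNULL is granted, the remaining subtle point is not the contraction estimate itself but the argument that $t \mapsto j^{-1}\int_0^t \SUNSTAR{T_0}(t-\tau)L u(\tau)\,d\tau$ is continuous \emph{into $X$}; this is handled by combining \cref{lem:sunstar:5} with the fact that $j$ is an isometric isomorphism of $X$ onto a closed subspace of $\SUNSUN{X}$.
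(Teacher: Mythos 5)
Your proposal is correct and follows essentially the same route as the paper, which simply invokes the classical fixed-point/successive-approximation proof of \cite[Theorem III.2.4]{Diekmann1995} with {\HNULL} doing exactly the job you assign to it, namely placing the convolution integral in $j(X)$ so that $j^{-1}$ may be applied; your short-interval contraction plus gluing is an equivalent variant of that construction (the gluing and semigroup-law steps tacitly use the routine identities $\SUNSTAR{T_0}(t)j\psi = jT_0(t)\psi$ and the splitting of the weak$^{\ast}$ convolution integral, which you should make explicit). The only slip is the claim that $j$ is isometric: in general one only has $M^{-1}\|\phi\| \le \|j\phi\| \le \|\phi\|$, so $j^{-1}$ is merely bounded on the closed range of $j$, which harmlessly inserts a factor $\|j^{-1}\|$ into your contraction constant.
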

We now wish to understand how the relationship \cref{eq:sunstar:1} between the semigroups $T_0$ and $T$ is reflected on the level of their generators $A_0$ and $A$. First we summarize what stays the same.

\begin{proposition}
  Suppose that {\HNULL} holds. Then $\SUN{X}$ is also the maximal subspace of strong continuity of the adjoint semigroup $\STAR{T}$ on $\STAR{X}$. The {\WSTAR} generator $\STAR{A}$ of $\STAR{T}$ is given by
  \[
    \DOM(\STAR{A}) = \DOM(\STAR{A_0}), \qquad \STAR{A} = \STAR{A_0} + \STAR{L},
  \]
  and the generator $\SUN{A}$ of the restriction $\SUN{T}$ of $\STAR{T}$ to $\SUN{X}$ is
  \[
    \DOM(\SUN{A}) = \{\SUN{\phi} \in \DOM(\STAR{A_0})\,:\, (\STAR{A_0} + \STAR{L})\SUN{\phi} \in \SUN{X}\}, \qquad \SUN{A} = \STAR{A_0} + \STAR{L}.
  \]
  $\SUNSUN{X}$ is also the maximal subspace of strong continuity of the adjoint semigroup $\SUNSTAR{T}$.
\end{proposition}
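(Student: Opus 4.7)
The plan is to derive all four statements from a single resolvent identity obtained by Laplace transforming the variation of constants formula~\cref{eq:sunstar:1}; the identification of generators and the sun-duality statements will then follow by standard sun-star arguments.

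First I would establish that $T$ has exponential growth, which follows from the estimate~\cref{eq:duality:18} together with a Gronwall argument applied to~\cref{eq:sunstar:1}. For $\RE\lambda$ past the growth bounds of both $T_0$ and $T$, I would multiply~\cref{eq:sunstar:1} by $e^{-\lambda t}$ and integrate over $\RR_+$. After testing against arbitrary $\SUN{\phi} \in \SUN{X}$ (which reduces the interchange of integrals to the scalar case) and applying Fubini, the inner time integral becomes the $\WSTAR$ resolvent $\SUNSTAR{R(\lambda,A_0)}$ and the outer integral produces $R(\lambda,A)$. The operator $j^{-1}$ can be pulled through the improper integral by truncating to $[0,N]$, invoking~\HNULL\ on each truncation, and using that $j(X)$ is closed in $\SUNSTAR{X}$. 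This yields
\[
  R(\lambda,A)\phi = R(\lambda,A_0)\phi + j^{-1}\SUNSTAR{R(\lambda,A_0)}\,L\,R(\lambda,A)\phi.
\]

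Next, taking adjoints and using $\STAR{R(\lambda,A)} = R(\lambda,\STAR{A})$ and $\STAR{R(\lambda,A_0)} = R(\lambda,\STAR{A_0})$, I would obtain the dual identity
\[
  R(\lambda,\STAR{A}) = R(\lambda,\STAR{A_0}) + R(\lambda,\STAR{A_0})\,\STAR{L}\,R(\lambda,\STAR{A}),
\]
with $\STAR{L}$ understood as the restriction to $\SUN{X} \hookrightarrow \SUNSTARSTAR{X}$ of the full adjoint of $L$. Rearranging and using invertibility of $\ID - R(\lambda,\STAR{A_0})\STAR{L}$ on $\STAR{X}$ by Neumann series (valid for $\RE\lambda$ large, since $\|R(\lambda,\STAR{A_0})\| \to 0$), I would conclude $(\lambda - \STAR{A_0} - \STAR{L})R(\lambda,\STAR{A}) = \ID$, hence $\DOM(\STAR{A}) = R(\lambda,\STAR{A})\STAR{X} = R(\lambda,\STAR{A_0})\STAR{X} = \DOM(\STAR{A_0})$ and $\STAR{A} = \STAR{A_0} + \STAR{L}$ on this common domain. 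The statement about $\SUN{X}$ then follows from $\BAR{\DOM(\STAR{A})} = \BAR{\DOM(\STAR{A_0})} = \SUN{X}$, since the maximal subspace of strong continuity of a $\WSTAR$-adjoint semigroup is the norm closure of its $\WSTAR$ generator's domain. The formula for $\SUN{A}$ is the standard realization of $\SUN{A}$ as the part of $\STAR{A}$ in $\SUN{X}$.

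For the final assertion about $\SUNSUN{X}$ I would iterate: dualizing~\cref{eq:sunstar:1} yields a variation of constants formula relating $\SUNSTAR{T}$ and $\SUNSTAR{T_0}$, to which the same argument applies at one higher level (with $\STAR{L}|_{\SUN{X}}$ in place of $L$ and an analogue of~\HNULL\ that is inherited from the original one). The principal obstacle is the Fubini/$j^{-1}$ interchange in the first display: without sun-reflexivity the convolution is not a priori in $j(X)$, so each step of the argument must explicitly use~\HNULL\ to remain inside the range of $j$. Once that point is cleared, the remainder is formal manipulation of resolvents and duality pairings.
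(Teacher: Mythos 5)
Your route is genuinely different from the paper's: the paper proves this proposition in one line, by observing that the proofs of Lemmas~III.2.6--III.2.7 and Corollaries~III.2.8--III.2.9 of Diekmann et al.\ apply verbatim once \textup{(H0)} is imposed. Those arguments work directly with the difference $U(t) \DEF T(t)-T_0(t)$, using the operator-norm estimate $\|U(t)\| = \BIGOH(t)$ (which follows from \textup{(H0)}, the boundedness of $j^{-1}$ on the closed subspace $j(X)$, and the standard convolution bound) to identify the maximal subspaces of strong continuity, and the weak${}^{\ast}$ difference quotient $t^{-1}\STAR{U}(t)\SUN{\phi} \to \STAR{L}\SUN{\phi}$ to identify the generators on $\SUN{X}$; no Laplace transforms appear. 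Your resolvent strategy is workable and more systematic, but the central identity is written incorrectly, and the error is not merely cosmetic here.

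Taking adjoints reverses composition, so dualizing $R(\lambda,A) = R(\lambda,A_0) + j^{-1}R(\lambda,\SUNSTAR{A_0})LR(\lambda,A)$ yields
\[
  R(\lambda,\STAR{A}) = R(\lambda,\STAR{A_0}) + R(\lambda,\STAR{A})\,\STAR{L}\,R(\lambda,\STAR{A_0}),
\]
with the two resolvents in the opposite order from what you wrote. In the non-sun-reflexive setting this matters for well-definedness: with your convention $\STAR{L}$ acts only on $\SUN{X} \hookrightarrow \SUNSTARSTAR{X}$, and in your ordering it would have to be applied to $R(\lambda,\STAR{A})\STAR{x} \in \DOM(\STAR{A})$, which at that stage of the argument is not yet known to lie in $\SUN{X}$ --- that containment is part of what is being proved. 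In the correct ordering $\STAR{L}$ acts on $R(\lambda,\STAR{A_0})\STAR{x} \in \DOM(\STAR{A_0}) \subseteq \SUN{X}$, which is fine. A second point you pass over as ``formal manipulation'': moving $j^{-1}$ across the pairing via $\PAIR{j^{-1}\SUNSTAR{w}}{\SUN{x}} = \PAIR{\SUN{x}}{\SUNSTAR{w}}$ only identifies the adjoint of $j^{-1}R(\lambda,\SUNSTAR{A_0})L$ with $\STAR{L}R(\lambda,\STAR{A_0})$ when tested against $\SUN{x} \in \SUN{X}$; extending this to all of $\STAR{X}$ requires an approximation argument (e.g.\ $\lambda_n R(\lambda_n,\STAR{A_0})\STAR{y} \to \STAR{y}$ weak${}^{\ast}$ while $R(\lambda,\STAR{A_0})\lambda_n R(\lambda_n,\STAR{A_0})\STAR{y} \to R(\lambda,\STAR{A_0})\STAR{y}$ in norm inside $\SUN{X}$), and this is exactly where the lack of sun-reflexivity bites. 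Once both points are repaired, the Neumann series applies to $\ID - \STAR{L}R(\lambda,\STAR{A_0})$, giving $R(\lambda,\STAR{A}) = R(\lambda,\STAR{A_0})\bigl(\ID - \STAR{L}R(\lambda,\STAR{A_0})\bigr)^{-1}$, from which the equality of domains, the formula $\STAR{A} = \STAR{A_0} + \STAR{L}$, and (via $\SUN{X} = \BAR{\DOM(\STAR{A_0})}$) the statement about $\SUN{X}$ follow as you indicate. For the final assertion the iteration you sketch is unnecessary: $\|\SUNSTAR{T}(t) - \SUNSTAR{T_0}(t)\| = \|T(t) - T_0(t)\| = \BIGOH(t)$ already forces the maximal subspaces of strong continuity of $\SUNSTAR{T}$ and $\SUNSTAR{T_0}$ to coincide.
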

\begin{proof}
  The proofs of \cite[Lemmas III.2.6 and III.2.7, Corollaries III.2.8 and III.2.9]{Diekmann1995} still apply verbatim, given that {\HNULL} is fulfilled.
\end{proof}

It remains to characterize the {\WSTAR} generator $\SUNSTAR{A}$ of the adjoint semigroup $\SUNSTAR{T}$. In the sun-reflexive case we have $\DOM(\SUNSTAR{A}) = \DOM(\SUNSTAR{A_0})$ and $\SUNSTAR{A} = \SUNSTAR{A_0} + Lj^{-1}$ but this cannot be true without sun-reflexivity. Namely, suppose that $\DOM(\SUNSTAR{A_0}) \subseteq j(X)$. Since $j(X)$ is closed it follows that $\SUNSUN{X} = \BAR{\DOM(\SUNSTAR{A_0})} \subseteq j(X)$ so $X$ is sun-reflexive with respect to $T_0$. So, we restrict to elements in the domains of $\SUNSTAR{A_0}$ and $\SUNSTAR{A}$ that lie in the range of $j$.

\begin{lemma}[cf. {\cite[Lemma III.2.11]{Diekmann1995}}]
  \label{lem:sunstar:2}
  Define $U : \RR_+ \to \BND(X)$ as $U(t) \DEF T_0(t) - T(t)$. For every $\phi \in X$ it holds that $t^{-1}\SUNSTAR{U}(t)j\phi \to L\phi$ {\WSTARLY} in $\SUNSTAR{X}$ as $t \downarrow 0$.
\end{lemma}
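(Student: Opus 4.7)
The plan is to combine the variation of constants formula \eqref{eq:sunstar:1} with the intertwining identity $\SUNSTAR{S}(t)\circ j = j\circ S(t)$, valid for any \C{0}-semigroup $S$ on $X$ (by a direct pairing calculation against an arbitrary $\SUN{\phi} \in \SUN{X}$ using the defining property \eqref{eq:adde:4} of $j$ and the standard adjoint identity), and then reduce the weak* limit to a Cesàro averaging argument inside the duality $\PAIR{\SUN{X}}{\SUNSTAR{X}}$.

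First, applying the intertwining identity to both $T_0$ and $T$ yields $\SUNSTAR{U}(t) j\phi = j U(t)\phi$, and substituting for $U(t)\phi = T_0(t)\phi - T(t)\phi$ from \eqref{eq:sunstar:1} represents this, up to the overall sign fixed by the conventions for $U$, as the \WSTAR~Riemann integral $\int_0^t \SUNSTAR{T_0}(t-\tau) L T(\tau)\phi\,d\tau$. This integral lies in $j(X)$ by hypothesis \HNULL{} (and in $\SUNSUN{X}$ by \cref{lem:sunstar:5}), so the application of $j^{-1}$ inside \eqref{eq:sunstar:1} is warranted throughout.

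Second, for arbitrary $\SUN{\phi} \in \SUN{X}$, pairing with $\SUN{\phi}$ and pulling the pairing inside the \WSTAR~integral — a standard operation for continuous $\SUNSTAR{X}$-valued integrands — produces (up to the same sign) the Cesàro average
\[
t^{-1}\int_0^t \PAIR{\SUN{T_0}(t-\tau)\SUN{\phi}}{LT(\tau)\phi}\,d\tau.
\]
The claim then reduces to showing that the integrand converges to $\PAIR{\SUN{\phi}}{L\phi}$ uniformly in $\tau \in [0,t]$ as $t \downarrow 0$. I would split the error as
\[
\PAIR{\SUN{T_0}(t-\tau)\SUN{\phi} - \SUN{\phi}}{LT(\tau)\phi} + \PAIR{\SUN{\phi}}{L(T(\tau)\phi - \phi)},
\]
bound the first term by $\|\SUN{T_0}(t-\tau)\SUN{\phi} - \SUN{\phi}\|$ times the locally uniform estimate $\|L\|\sup_{\tau \in [0,1]}\|T(\tau)\phi\|$, and the second by $\|\SUN{\phi}\|\cdot\|L\|\cdot\|T(\tau)\phi - \phi\|$. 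Strong continuity of $\SUN{T_0}$ on $\SUN{X}$ and of $T$ on $X$ then make both errors vanish uniformly on $[0,t]$ as $t \downarrow 0$, which is enough to conclude.

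The main obstacle is the sun-star bookkeeping in the first two steps, in particular the identification $\SUNSTAR{U}(t) j\phi = j U(t)\phi$ and the justification that the pairing commutes with the \WSTAR~integral for the specific integrand $\tau \mapsto \SUNSTAR{T_0}(t-\tau)LT(\tau)\phi$, which is continuous from $[0,t]$ into $\SUNSTAR{X}$. Once this identification and interchange are in place, the limit is a routine Cesàro property for continuous integrands averaged over shrinking intervals.
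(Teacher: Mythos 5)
Your argument is correct, but it is genuinely different from the paper's. The paper disposes of the lemma in two lines: by the duality identities $\PAIR{\SUN{\phi}}{\SUNSTAR{U}(t)j\phi} = \PAIR{\SUN{U}(t)\SUN{\phi}}{j\phi} = \PAIR{\phi}{\STAR{U}(t)\SUN{\phi}}$, the claim is reduced to the convergence $t^{-1}\STAR{U}(t)\SUN{\phi} \to \STAR{L}\SUN{\phi}$ on the $\STAR{X}$-side, which is simply quoted from \cite[Lemma III.2.7]{Diekmann1995} (whose proof, as recorded in the proposition preceding the lemma, applies verbatim under {\HNULL}). You instead re-derive that convergence from scratch: the intertwining $\SUNSTAR{U}(t)j = jU(t)$, substitution from \cref{eq:sunstar:1}, and a Ces\`aro average of the scalar integrand $\tau \mapsto \PAIR{\SUN{T_0}(t-\tau)\SUN{\phi}}{LT(\tau)\phi}$, whose uniform convergence to $\PAIR{\SUN{\phi}}{L\phi}$ you establish by the two-term splitting. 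All of this is sound; what it buys is a self-contained proof that does not lean on the external reference (in effect you have reproved the content of the cited lemma), at the cost of length. Two small points deserve attention. First, the sign: with the definition $U(t) \DEF T_0(t) - T(t)$ as literally stated, \cref{eq:sunstar:1} gives $t^{-1}\SUNSTAR{U}(t)j\phi = -t^{-1}\int_0^t{\SUNSTAR{T_0}(t-\tau)LT(\tau)\phi\,d\tau} \to -L\phi$, so your computation actually exposes a sign slip in the statement (the convention compatible with the claimed limit is $U(t) = T(t) - T_0(t)$); you should pin this down rather than leave it ``up to the conventions.'' Second, the map $\tau \mapsto \SUNSTAR{T_0}(t-\tau)LT(\tau)\phi$ is in general only {\WSTAR}-continuous, not norm-continuous into $\SUNSTAR{X}$, since $LT(\tau)\phi$ need not lie in $\SUNSUN{X}$; this is harmless, because the interchange of $\PAIR{\SUN{\phi}}{\cdot}$ with the integral is precisely the defining property of the {\WSTAR} Riemann integral and requires no norm continuity, but the justification you offer for it should be phrased accordingly.
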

\begin{proof}
  For every $\SUN{\phi} \in \SUN{X}$ and every $t > 0$ we have
  \[
    \frac{1}{t}\PAIR{\SUN{\phi}}{\SUNSTAR{U}(t)j\phi} = \frac{1}{t}\PAIR{\SUN{U}(t)\SUN{\phi}}{j\phi} = \frac{1}{t}\PAIR{\phi}{\STAR{U}(t)\SUN{\phi}},
  \]
  which tends to $\PAIR{\phi}{\STAR{L}\SUN{\phi}}$ $= \PAIR{\SUN{\phi}}{L\phi}$ as $t \downarrow 0$ by \cite[Lemma III.2.7]{Diekmann1995}.
\end{proof}

\begin{proposition}[cf. {\cite[Corollary III.2.12]{Diekmann1995}}]
  \label{prop:sunstar:8}
  For the {\WSTAR} generators $\SUNSTAR{A_0}$ and $\SUNSTAR{A}$ it holds that
  \begin{equation}
    \label{eq:sunstar:24}
    \DOM(\SUNSTAR{A}) \cap j(X) = \DOM(\SUNSTAR{A_0}) \cap j(X),
  \end{equation} 
  and $\SUNSTAR{A} = \SUNSTAR{A_0} + Lj^{-1}$ on this subspace.
\end{proposition}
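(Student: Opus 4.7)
The plan is to compare the difference quotients defining the {\WSTAR} generators $\SUNSTAR{A_0}$ and $\SUNSTAR{A}$. Starting from $\SUNSTAR{T}(t) = \SUNSTAR{T_0}(t) - \SUNSTAR{U}(t)$, for every $\phi \in X$ and $t > 0$ one obtains the three-term identity
\[
\frac{\SUNSTAR{T}(t) - I}{t}\,j\phi \;=\; \frac{\SUNSTAR{T_0}(t) - I}{t}\,j\phi \;-\; \frac{\SUNSTAR{U}(t)\,j\phi}{t}.
\]
By the defining property of the {\WSTAR} generator of a {\WSTAR}-continuous semigroup on $\SUNSTAR{X}$, a point $\STAR{x} \in \SUNSTAR{X}$ belongs to $\DOM(\SUNSTAR{A_0})$ (respectively $\DOM(\SUNSTAR{A})$) precisely when the associated difference quotient admits a {\WSTAR} limit in $\SUNSTAR{X}$ as $t \downarrow 0$; the limit is then $\SUNSTAR{A_0}\STAR{x}$ (respectively $\SUNSTAR{A}\STAR{x}$).

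Now \cref{lem:sunstar:2} comes in: the third term in the identity always has a {\WSTAR} limit, expressible as $Lj^{-1}(j\phi) = L\phi$. Since {\WSTAR} convergence is closed under finite linear combinations, the first and second terms of the identity are thereby synchronized: one admits a {\WSTAR} limit if and only if the other does. Specializing $\STAR{x} = j\phi$, this equivalence delivers the domain equality \cref{eq:sunstar:24} at once. Taking {\WSTAR} limits on both sides of the identity on the common domain then yields the action formula $\SUNSTAR{A}\,j\phi = (\SUNSTAR{A_0} + Lj^{-1})\,j\phi$, with the sign determined by tracking the convention $U(t) = T_0(t) - T(t)$ through \cref{lem:sunstar:2}.

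The argument faces no serious obstacle; all of the analytical work has been packaged into \cref{lem:sunstar:2}. The only point deserving mild care is the characterization of the {\WSTAR} generator that is used above, namely that domain membership coincides with \emph{pointwise} {\WSTAR} convergence of the difference quotient (with no further uniformity or strong-continuity requirement). Granted that, the three-term splitting settles both the domain equality and the action formula simultaneously and without any further input beyond the preceding lemma.
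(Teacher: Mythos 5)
Your proposal is correct and follows essentially the same route as the paper: both rest on the characterization of the {\WSTAR} generator via pointwise {\WSTAR} convergence of the difference quotient, split $\SUNSTAR{T}(t)j\phi$ into the unperturbed quotient plus the term $t^{-1}\SUNSTAR{U}(t)j\phi$, and invoke \cref{lem:sunstar:2} to conclude that the latter always converges, so the two quotients converge or diverge together. The only cosmetic difference is that you obtain both inclusions in one stroke from this equivalence, whereas the paper proves one inclusion and then interchanges the roles of $T_0$ and $T$.
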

\begin{proof}
  From the general theory we know that
  \begin{equation}
    \label{eq:sunstar:6}
    \begin{aligned}
      \DOM(\SUNSTAR{A_0}) &= \{\SUNSUN{\phi} \in \SUNSUN{X}\,:\, \frac{1}{t}(\SUNSTAR{T_0}(t)\SUNSUN{\phi} - \SUNSUN{\phi}) \text{ converges {\WSTARLY} in } \SUNSTAR{X}\},\\
      \SUNSTAR{A_0}\SUNSUN{\phi} &= \WLIM_{t \downarrow 0}{\frac{1}{t}(\SUNSTAR{T_0}(t)\SUNSUN{\phi} - \SUNSUN{\phi})},
    \end{aligned}
  \end{equation}
  where $\WLIM$ is a {\WSTAR}-limit in $\SUNSTAR{X}$, with an analogous result for $\SUNSTAR{A}$. Also, let $U$ be as in \cref{lem:sunstar:2}. For any $\SUNSUN{\phi} \in \DOM(\SUNSTAR{A_0}) \cap j(X)$ and any $\phi \DEF j^{-1}\SUNSUN{\phi} \in X$,
  \[
    \frac{1}{t}(\SUNSTAR{T}(t)\SUNSUN{\phi} - \SUNSUN{\phi}) = \frac{1}{t}(\SUNSTAR{T_0}(t)\SUNSUN{\phi} - \SUNSUN{\phi}) + \frac{1}{t}\SUNSTAR{U}(t)j\phi \to \SUNSTAR{A_0}\SUNSUN{\phi} + Lj^{-1}\SUNSUN{\phi},
  \]
  {\WSTARLY} in $\SUNSTAR{X}$ as $t \downarrow 0$ by \cref{eq:sunstar:6} and \cref{lem:sunstar:2}. It follows that $\SUNSUN{\phi} \in \DOM(\SUNSTAR{A})$ and $\SUNSTAR{A}\SUNSUN{\phi} = \SUNSTAR{A_0}\SUNSUN{\phi} + Lj^{-1}\SUNSUN{\phi}$. By interchanging the roles of $T_0$ and $T$ we obtain the reverse inclusion.
\end{proof}

In the sun-reflexive case, the intersections in \cref{eq:sunstar:24} simply reproduce $\DOM(\SUNSTAR{A})$ and $\DOM(\SUNSTAR{A_0})$ because then both domains are contained in the range of $j$.

\begin{corollary}[cf. {\cite[Corollary III.2.13]{Diekmann1995}}]
  \label{cor:sunstar:1}
  The generator $A$ is given by
  \begin{align}
    \DOM(A) &= \{\phi \in X\,:\,j\phi \in \DOM(\SUNSTAR{A_0}) \text{ and } \SUNSTAR{A_0}j\phi + L\phi \in j(X)\},\label{eq:sunstar:9}\\
    A\phi &= j^{-1}(\SUNSTAR{A_0}j\phi + L\phi).\nonumber
  \end{align}
\end{corollary}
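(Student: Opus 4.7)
The plan is to combine \cref{prop:sunstar:8} with the general fact, valid for any \C{0}-semigroup $T$ on $X$ with generator $A$, that
\[
  \phi \in \DOM(A) \iff j\phi \in \DOM(\SUNSTAR{A}) \text{ and } \SUNSTAR{A}j\phi \in j(X),
\]
in which case $A\phi = j^{-1}\SUNSTAR{A}j\phi$. Granting this equivalence, the corollary follows at once: if $\phi \in \DOM(A)$ then $j\phi \in \DOM(\SUNSTAR{A}) \cap j(X)$, so by \cref{prop:sunstar:8} also $j\phi \in \DOM(\SUNSTAR{A_0}) \cap j(X)$ and $\SUNSTAR{A}j\phi = \SUNSTAR{A_0}j\phi + Lj^{-1}(j\phi) = \SUNSTAR{A_0}j\phi + L\phi$, which must lie in $j(X)$ and equal $jA\phi$. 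Reversing the steps gives the other inclusion and delivers the formula $A\phi = j^{-1}(\SUNSTAR{A_0}j\phi + L\phi)$.

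First I would verify the $\Rightarrow$ direction of the general equivalence. The intertwining identity $\SUNSTAR{T}(t)j = jT(t)$ drops out of the definitions by unwinding pairings: for $\phi \in X$ and $\SUN{\phi} \in \SUN{X}$ one has $\PAIR{\SUN{\phi}}{\SUNSTAR{T}(t)j\phi} = \PAIR{\SUN{T}(t)\SUN{\phi}}{j\phi} = \PAIR{\phi}{\STAR{T}(t)\SUN{\phi}} = \PAIR{T(t)\phi}{\SUN{\phi}} = \PAIR{\SUN{\phi}}{jT(t)\phi}$. If $\phi \in \DOM(A)$, applying the isometry $j$ to the norm limit $t^{-1}(T(t)\phi - \phi) \to A\phi$ yields $t^{-1}(\SUNSTAR{T}(t)j\phi - j\phi) \to jA\phi$ in norm, hence \WSTARLY{} in $\SUNSTAR{X}$. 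This places $j\phi$ in $\DOM(\SUNSTAR{A})$ with $\SUNSTAR{A}j\phi = jA\phi \in j(X)$.

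For the converse I would use a resolvent argument. For $\RE\lambda$ sufficiently large, $\lambda$ lies in the resolvent sets of both $A$ and $\SUNSTAR{A}$, and the identity $j(\lambda - A)^{-1} = (\lambda - \SUNSTAR{A})^{-1}j$ holds in $\BND(X,\SUNSTAR{X})$ as a standard consequence of the adjoint and sun-dual construction (both sides intertwine the respective semigroups and agree after taking Laplace transforms). Suppose $j\phi \in \DOM(\SUNSTAR{A})$ with $\SUNSTAR{A}j\phi = j\psi$ for some $\psi \in X$. Then $(\lambda - \SUNSTAR{A})j\phi = j(\lambda\phi - \psi)$, and applying $(\lambda - \SUNSTAR{A})^{-1}$ followed by the intertwining identity gives $j\phi = j(\lambda - A)^{-1}(\lambda\phi - \psi)$. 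Injectivity of $j$ forces $\phi = (\lambda - A)^{-1}(\lambda\phi - \psi) \in \DOM(A)$ and $A\phi = \psi = j^{-1}\SUNSTAR{A}j\phi$.

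The main obstacle is essentially bookkeeping: one has to be sure the resolvent sets of $A$ and $\SUNSTAR{A}$ share a common right half-plane to license the resolvent identity, but this is immediate because $\SUNSTAR{T}$ is the {\WSTAR}-continuous adjoint of $\SUN{T}$ and thus inherits exponential bounds from $T$, whose existence is guaranteed by \cref{thm:sunstar:1}. No further input beyond \cref{prop:sunstar:8} and the basic duality machinery is required, and in particular sun-reflexivity is nowhere used; the proof proceeds uniformly under \HNULL{}.
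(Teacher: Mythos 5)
Your argument is correct, and it reaches the corollary by a genuinely different route in the harder direction. Both you and the paper reduce everything to the same intermediate fact --- $\phi \in \DOM(A)$ if and only if $j\phi \in \DOM(\SUNSTAR{A})$ with $\SUNSTAR{A}j\phi \in j(X)$, in which case $A\phi = j^{-1}\SUNSTAR{A}j\phi$ --- and then invoke \cref{prop:sunstar:8} to trade $\SUNSTAR{A}$ for $\SUNSTAR{A_0} + Lj^{-1}$ on $\DOM(\SUNSTAR{A_0}) \cap j(X)$; your "easy" direction (norm convergence of the difference quotient implies {\WSTAR} convergence) is essentially the paper's as well. The difference is in the implication from $j\phi \in \DOM(\SUNSTAR{A})$, $\SUNSTAR{A}j\phi \in j(X)$ back to $\phi \in \DOM(A)$: the paper routes this through $\SUNSUN{A}$, the part of $\SUNSTAR{A}$ in $\SUNSUN{X}$, noting that $\SUNSTAR{A}j\phi \in j(X) \subseteq \SUNSUN{X}$ forces $j\phi \in \DOM(\SUNSUN{A})$, so the difference quotient $j\bigl(t^{-1}(T(t)\phi - \phi)\bigr)$ converges \emph{in norm}; closedness of $j(X)$ and continuity of $j^{-1}$ then yield $\phi \in \DOM(A)$. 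You instead use the intertwining $R(\lambda,\SUNSTAR{A})j = jR(\lambda,A)$ and injectivity of $j$, which is equally valid (the identity follows by unwinding pairings exactly as you indicate for the semigroups, and the common half-plane of resolvent existence is indeed automatic since $\SUNSTAR{A}$ is the adjoint of the generator $\SUN{A}$). What the paper's route buys is that it stays entirely at the level of generators and semigroups already set up in \cref{eq:sunstar:10}, with no need to introduce resolvents of $\SUNSTAR{A}$; what your route buys is that it bypasses the identification of $\SUNSUN{A}$ as the part of $\SUNSTAR{A}$ in $\SUNSUN{X}$ and packages the argument as a clean general equivalence that could be quoted independently of the perturbation setting. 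Neither uses sun-reflexivity, as required.
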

\begin{proof}
  The generator $\SUNSUN{A}$ is the part of $\SUNSTAR{A}$ in $\SUNSUN{X}$,
  \[
    \DOM(\SUNSUN{A}) = \{\SUNSUN{\phi} \in \DOM(\SUNSTAR{A}) \,:\, \SUNSTAR{A}\SUNSUN{\phi} \in \SUNSUN{X}\}, \qquad \SUNSUN{A}\SUNSUN{\phi} = \SUNSTAR{A}\SUNSUN{\phi}.
  \]  
  Also, for every $\phi \in X$ and every $t > 0$,
  \begin{equation}
    \label{eq:sunstar:10}
    j\Bigl(\frac{1}{t}(T(t)\phi - \phi)\Bigr) = \frac{1}{t}(\SUNSUN{T}(t)j\phi - j\phi).
  \end{equation}
  Suppose that $\phi$ is in the right-hand side of \cref{eq:sunstar:9}. Now $j\phi \in \DOM(\SUNSTAR{A_0}) \cap j(X)$ so $j\phi \in \DOM(\SUNSTAR{A})$ by \cref{prop:sunstar:8} and
  \begin{displaymath}
    \SUNSTAR{A}j\phi = \SUNSTAR{A_0}j\phi + L \phi \in j(X) \subseteq \SUNSUN{X}.
  \end{displaymath}
  Consequently $j\phi \in \DOM(\SUNSUN{A})$ and the right-hand side of \cref{eq:sunstar:10} converges in norm to $\SUNSTAR{A}j\phi = \SUNSTAR{A_0}j\phi + L\phi$ as $t \downarrow 0$. By continuity of $j^{-1}$ this implies that $\phi \in \DOM(A)$ and 
  \begin{displaymath}
    A\phi = j^{-1}(\SUNSTAR{A_0}j\phi + L\phi).
  \end{displaymath}
  Conversely, suppose $\phi \in \DOM(A)$. Then by \cref{eq:sunstar:10} and the continuity of $j$ we have that $j\phi \in \DOM(\SUNSUN{A})$ and $\SUNSUN{A}j\phi = j A\phi$. \Cref{prop:sunstar:8} implies that $j\phi \in \DOM(\SUNSTAR{A_0})$ and
  \begin{displaymath}
    \SUNSTAR{A_0}j\phi + L\phi  = \SUNSTAR{A}j\phi = \SUNSUN{A}j\phi = jA\phi \in j(X),
  \end{displaymath}
  so $\phi$ is in the right-hand side of \cref{eq:sunstar:9}.
\end{proof}

\section{Comments on perturbations for abstract DDEs}\label{sec:sunstar:linadde}
A non-trivial example of a semigroup $T_0$ and an operator $L$ satisfying {\HNULL} from \cref{sec:sunstar:linear} arises when we let $T_0$ be the shift semigroup from \cref{eq:adde:3} and consider linear abstract DDEs. The corresponding linear initial value problem is
\begin{subequations}
  \label{eq:sunstar:linadde}
  \begin{alignat}{2}
    \dot{x}(t) &= B x(t) + \Phi x_t, &\qquad &t \ge 0,\\
    x(\theta) &= \phi(\theta), &&\theta \in [-h,0],
  \end{alignat}  
\end{subequations}
with $\Phi \in \BND(X,Y)$. (For instance, it could be obtained from \cref{eq:adde:1} by linearization at a zero equilibrium solution, assuming $F$ is of class $C^1$ on a neighborhood of $0 \in X$, in which case $\Phi = DF(0)$ is the Fr\'echet derivative of $F$ at the origin.) If we define $L \DEF \ell \circ \Phi \in \BND(X,\SUNSTAR{X})$ then \Cref{prop:duality:1} implies that
\[
  \int_0^t{\SUNSTAR{T_0}(t - \tau)L u(\tau)\,d\tau} \in j(X),
\]
for every continuous function $u : \RR_+ \to X$, so {\HNULL} is satisfied. \Cref{thm:sunstar:1} then implies the existence of a \C{0}-semigroup $T$ obtained by perturbing $T_0$ by $L$ that is the unique solution of \cref{eq:sunstar:1}. \cref{thm:adde_as_aie:1} with $G = L$ gives the precise correspondence between solutions of \cref{eq:sunstar:1} and \cref{eq:sunstar:linadde}.

Having explained how linear abstract DDEs fit into the general context of \cref{sec:sunstar:linear}, in conclusion we turn our attention again to the general, nonlinear case. As is well-known, for \emph{classical} DDEs the operator $G$ in \cref{eq:adde:2} is of finite rank. This has some far-reaching consequences, particularly for the spectral theory. For \emph{abstract} DDEs we note that \Cref{thm:duality:1} implies that
\begin{equation}
  \label{eq:sunstar:xsunstar}
  \SUNSTAR{X} \simeq \SUNSTAR{Y} \times \STAR{[\LP{1}([0,h],\STAR{Y})]},
\end{equation}
also see \cref{fig:duality:1}. So, as could be expected, the finite rank property is lost as soon as $Y$ is infinite-dimensional, but \cref{lem:duality:8} from \cref{sec:duality:range} \emph{does} imply that
\begin{equation}
  \label{eq:sunstar:38}
   G(\phi) = (j_Y F(\phi), 0), \qquad \forall\,\phi \in X.
\end{equation}
Therefore, as in the classical case, it remains true that $G$ takes nonzero values only in the first component of $\SUNSTAR{X}$. In general it is impossible to obtain an explicit representation for the dual space $\STAR{[\LP{1}([0,h],\STAR{Y})]}$ appearing in \cref{eq:sunstar:xsunstar} unless $\STARSTAR{Y}$ has the Radon-Nikod\'ym property \cite[Theorem IV.1]{Diestel1977}. However, $\SUNSTAR{X}$ contains a familiar, nontrivial closed subspace that itself contains the range of $G$. Here we give an elementary proof of this.

\begin{proposition}
  Let $I$ be any non-trivial interval. Define $V \DEF \LP{\infty}(I,Y)$ and $W \DEF \LP{1}(I,\STAR{Y})$. Then $V$ is isometrically isomorphic to a closed subspace of $\STAR{W}$.
\end{proposition}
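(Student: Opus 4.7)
The plan is to introduce the canonical map $J : V \to \STAR{W}$ defined by
\[
  \PAIR{g}{Jf} \DEF \int_I \PAIR{f(t)}{g(t)} \, dt, \qquad f \in V,\, g \in W,
\]
and to verify that $J$ is an isometry. Closedness of $J(V)$ in $\STAR{W}$ then follows automatically from the completeness of $V$. Well-definedness of the integral and the upper bound $\|Jf\|_{\STAR{W}} \le \|f\|_\infty$ are routine: strong measurability of $f$ (which implies essentially separable range) and of $g$ makes $t \mapsto \PAIR{f(t)}{g(t)}$ scalar measurable, while the pointwise estimate $|\PAIR{f(t)}{g(t)}| \le \|f\|_\infty\,\|g(t)\|$ delivers both integrability and the norm bound.

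The substantive part is the reverse inequality $\|Jf\|_{\STAR{W}} \ge \|f\|_\infty$. Fix $\EPS > 0$. By the definition of essential supremum together with regularity of Lebesgue measure on $I$, I would choose a measurable $A \subseteq I$ with $0 < |A| < \infty$ on which $\|f(t)\| > \|f\|_\infty - \EPS$ a.e. Since $f$ is strongly measurable, its values on $A$ lie in a separable closed subspace $Y_0 \subseteq Y$. The closed unit ball of $\STAR{Y_0}$ is then {\WSTAR}-metrizable and thus admits a countable {\WSTAR}-dense sequence $\{\eta_n\}_{n \in \NN}$, which I extend by Hahn-Banach to a sequence $\{\tilde\eta_n\} \subseteq \STAR{Y}$ of norms at most $1$. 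A standard consequence is the pointwise representation $\|y\| = \sup_n \RE \PAIR{y}{\tilde\eta_n}$ for every $y \in Y_0$.

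The main obstacle is producing a \emph{measurable} norming function on $A$ from the family $\{\tilde\eta_n\}$, which I would handle by a standard measurable partition. Define inductively
\[
  B_n \DEF \bigl\{ t \in A \,:\, \RE \PAIR{f(t)}{\tilde\eta_n} > \|f(t)\| - \EPS \bigr\} \setminus \bigcup_{k < n} B_k, \qquad g^\EPS \DEF |A|^{-1} \sum_{n \in \NN} \tilde\eta_n \, \chi_{B_n}.
\]
Measurability of each $B_n$ follows from that of $t \mapsto \PAIR{f(t)}{\tilde\eta_n}$ and $t \mapsto \|f(t)\|$, and the $B_n$ cover $A$ up to a null set by the pointwise representation above. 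The function $g^\EPS$ is strongly measurable (countably-valued on pairwise disjoint measurable sets) with $\|g^\EPS(t)\| \le |A|^{-1}$ on $A$ and $0$ elsewhere, so $g^\EPS \in W$ with $\|g^\EPS\|_1 \le 1$. By construction $\RE \PAIR{f(t)}{|A|\, g^\EPS(t)} > \|f(t)\| - \EPS > \|f\|_\infty - 2\EPS$ a.e. on $A$, and integrating over $A$ yields $\|Jf\|_{\STAR{W}} \ge \RE \PAIR{g^\EPS}{Jf} \ge \|f\|_\infty - 2\EPS$. Letting $\EPS \downarrow 0$ completes the proof.
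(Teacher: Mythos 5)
Your argument is correct, and it reaches the reverse inequality by a genuinely different route than the paper. The paper first proves $\|v\|_\infty \le \|\kappa v\|_{\STAR{W}}$ for \emph{countably valued} $v$, where the Hahn--Banach theorem supplies an exactly norming functional $\STAR{y}_i$ for each value $y_i$ and the resulting step function $w$ does all the work via the exceptional-set argument with $A_\EPS$; the general case is then obtained by invoking the corollary of Pettis' theorem that a strongly measurable function is the a.e.\ \emph{uniform} limit of countably valued ones, and passing to the limit using the already-established boundedness of $\kappa$. You instead attack a general $f$ directly: you localize to a set $A$ where $\|f(t)\|$ nearly attains the essential supremum, use separable-valuedness to get a {\WSTAR}-metrizable dual unit ball with a countable {\WSTAR}-dense norming sequence, and then perform a measurable partition of $A$ to assemble an approximately norming element $g^\EPS \in W$ of unit norm. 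Both proofs lean on the same two pillars (Hahn--Banach and the essential separability of the range guaranteed by strong measurability); yours trades the paper's two-step density-and-limit structure for a one-step construction whose price is the measurable-selection argument via the sets $B_n$ and the {\WSTAR}-density of $\{\eta_n\}$. The paper's version is slightly more economical because the norming is exact on the dense class, whereas yours is self-contained in the sense that no approximation of $f$ itself is needed. All the individual steps you sketch (measurability of $t \mapsto \|f(t)\|$ and of each $B_n$, the identity $\|y\| = \sup_n \RE\PAIR{y}{\tilde\eta_n}$ on $Y_0$ via {\WSTAR}-continuity of $\RE\PAIR{y}{\cdot}$, the bound $\|g^\EPS\|_1 \le 1$, and the closedness of the range of an isometry defined on a complete space) check out.
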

\begin{proof}
  We will show that $\kappa : V \to \STAR{W}$ is an isometric embedding, with $\kappa$ defined by
\begin{displaymath}
  \PAIR{w}{\kappa v} \DEF \int_{I}{\PAIR{v(s)}{w(s)}\,ds}, \qquad \forall\,v \in V,\, w \in W.
\end{displaymath}
The right-hand side is well-defined as an ordinary Lebesgue integral. In fact,
\begin{displaymath}
  |\PAIR{w}{\kappa v}| \le \int_{I}{|\PAIR{v(s)}{w(s)}|\,ds} \le \|v\|_{\infty} \int_{I}{\|w(-s)\|_Y\,ds} = \|w\|_1 \cdot \|v\|_{\infty}.
\end{displaymath}
Taking the supremum over all $w \in W$ with $\|w\|_1 = 1$ it follows that 
\begin{displaymath}
  \|\kappa v\|_{\STAR{W}} \le \|v\|_{\infty}, \qquad \forall\,v \in V.
\end{displaymath}
For the other inequality we adapt the argument for the scalar case \cite[Theorem 37.10]{Aliprantis1998}.
\begin{steps}
\item
  Let $v \in V$ be countably valued, i.e. there exists a sequence $(I_i)_{i \in \NN}$ of measurable subsets of $I$ and a sequence $(y_i)_{i \in \NN}$ of nonzero elements of $Y$ such that
  \begin{displaymath}
    v(s) = \sum_{i=1}^{\infty}{\chi_i(s)y_i}, \qquad \text{a.e. } s \in I,
  \end{displaymath}
  where $\chi_i$ is the indicator function of $I_i$. The Hahn-Banach theorem implies that for each $i \in \NN$ there exists $\STAR{y}_i \in \STAR{Y}$ such that
  \begin{displaymath}
    \|\STAR{y}_i\|_{\STAR{Y}} = 1, \qquad \PAIR{y_i}{\STAR{y}_i} = \|y_i\|_Y.
  \end{displaymath}
  Define $w \in W$ by 
  \begin{displaymath}
    w(s) \DEF \sum_{i=1}^{\infty}{\chi_i(s)\STAR{y}_i}, \qquad \forall\,s \in I,
  \end{displaymath}
  where it does not matter which $\STAR{y}_i$ we choose. Let $\EPS > 0$ and define 
  \begin{displaymath}
    A_{\EPS} \DEF \{s \in I\,:\, \|\kappa v\|_{\STAR{W}} + \EPS < \|v(s)\|_Y\}.
  \end{displaymath}
  Denoting by $|A_{\EPS}|$ the measure of $A_{\EPS}$, we then have the estimate
  \begin{align*}
    (\|\kappa v\|_{\STAR{W}} + \EPS)|A_{\EPS}| &\le \int_{A_{\EPS}}{\|v(s)\|_Y\,ds} = \int_{A_{\EPS}}{\PAIR{v(s)}{w(s)}\,ds}\\
    &\le \|\kappa v\|_{\STAR{W}} \|\chi_{A_{\EPS}} w\|_1 = \|\kappa v\|_{\STAR{W}} \int_{A_{\EPS}}{\Bigl\|\sum_{i=1}^{\infty}{\chi_i(s)\STAR{y}_i}\Bigr\|\,ds}\\
    &= \|\kappa v\|_{\STAR{W}} \int_{A_{\EPS}}{\sum_{i=1}^{\infty}{\chi_i(s)}\,ds} \le \|\kappa v\|_{\STAR{W}}|A_{\EPS}|.
  \end{align*}
  This implies that $|A_{\EPS}| = 0$, so $\|v\|_{\infty} \le \|\kappa v\|_{\STAR{W}} + \EPS$. Since $\EPS$ was arbitrary, it follows that
  \begin{equation}
    \label{eq:notes:1}
    \|v\|_{\infty} \le \|\kappa v\|_{\STAR{W}}.
  \end{equation}
\item
  Let $v \in V$ be arbitrary. By corollary of Pettis' measurability theorem \cite[Corollary 1.1.2]{Arendt2011} $v$ is the \emph{uniform} limit a.e. of a sequence of measurable countably valued functions. By taking limits and using the boundedness of $\kappa$ we see that \eqref{eq:notes:1} also holds for $v$. \hfill \qedhere
\end{steps}
\end{proof}

Applying the above result to our situation, we conclude that $\SUNSTAR{X}$ contains a closed subspace isometrically isomorphic to $\SUNSTAR{Y} \times \LP{\infty}([-h,0],Y)$. This property together with \cref{eq:sunstar:38} is useful for explicit calculations at a later stage, for example when computing local normal forms \cite{VanGils2013}.


\section{Conclusion and outlook}\label{sec:adde:conclusion}
In this article we did not make any assumptions beyond strong continuity about the regularity of the semigroup $S$ generated by $B$ in \cref{eq:adde:1}. As an advantage, there was no need - at least at the abstract level - to distinguish between the cases $B \neq 0$ and $B = 0$. We note that the latter case is \emph{not} admissible if $Y$ is infinite-dimensional and $S$ is required to be immediately or eventually compact. Indeed, such a requirement often occurs in \cite{Travis1974,Wu1996,Faria2002,Faria2006}. 

However, mere strong continuity is insufficient for most questions from local  dynamical systems theory, because of the absence of a general spectral mapping principle \cite[\S IV.3a]{Engel2000}. Therefore, in order to proceed we require $S$ to be immediately norm continuous. This includes the cases that $S$ is analytic or immediately compact. Importantly, immediate norm continuity of $S$ implies eventual norm continuity of the \C{0}-semigroup $T$ corresponding to the linear initial value problem \cref{eq:sunstar:linadde} \cite[Theorem VI.6.6]{Engel2000}. Moreover, for such semigroups there exists a general spectral mapping principle and as a consequence the growth bound of $T$ can be deduced from its spectral bound.

Nonlinear perturbation and the existence and properties of local invariant manifolds are the subject of an article that is currently in preparation. As is true for the present article, part of the underlying work had already been completed not long after publication of \cite{VanGils2013}. We intend to present in two stages: First we prove results for \cref{eq:adde:2}, generalizing a small subset of the material from \cite{Clement1987, Clement1988, Clement1989, Clement1989b, Diekmann1991,Diekmann1995} and along the way formulating suitable conditions in the spirit of {\HNULL}. By virtue of \cref{thm:adde_as_aie:1} applicability of these results to \cref{eq:adde:1} will then follow as a special case. Although slightly less direct, this approach has the benefit that the work will also apply to other problems that can be cast in the form \cref{eq:adde:2}, such as those studied in \cite{Diekmann2008}.

\subsection*{Acknowledgements}
I would like to thank Prof. Odo Diekmann for his practical help and encouragement in turning old notes into a - hopefully readable - public article, and for contributing much to the considerable body of theory from which I have the pleasure to benefit in my research. I would also like to thank Prof. Stephan A. van Gils for his contributions to the field of analysis of delay equations, and for involving me in the motivating work leading to \cite{VanGils2013}. Finally, I would like to thank Prof. Stephan A. van Gils and Prof. Yuri A. Kuznetsov for engaging discussions on various occasions, for their interest and their valuable comments.


\newpage
\appendix
\numberwithin{equation}{section}
\numberwithin{theorem}{section}

\section{Bounded variation and integration}\label{sec:integration}
This appendix is a vector-valued counterpart to \cite[Appendix I]{Diekmann1995}. After introducing vector-valued functions of bounded variation in \cref{sec:integration:bv} we discuss in \cref{sec:integration:rs} how the traditional Riemann-Stieltjes integral involving an integrand $f: [a,b] \to \RR$ and a function $\eta : [a,b] \to \RR$ of bounded variation can be extended to the case that \emph{both} $f$ and $\eta$ take values in Banach spaces related by a continuous bilinear form\footnote{Alternatively, it is possible to formulate the results from \cref{sec:duality:sun_dual,sec:duality:range,sec:adde:adde_as_aie} in terms of vector measures \cite{Bartle1956,Diestel1977}. However, in the theory of delay equations one traditionally works with functions of bounded variation.}. To our knowledge, such an extension was first performed by Gowurin\footnote{A student from the Saint Petersburg school of G.M. Fichtenholz, Goruwin seems to be better known in the Western literature as M.K. {\it Gavurin} (1911 - 1992). He worked primarily on real, functional and numerical analysis.} \cite{Gowurin1936}. Most results are rather easy generalizations of their scalar counterparts, but there are a few differences as well. Some of this material is also presented in \cite{Gowurin1936} or \cite[Chapter X]{Lang1993}. 


\subsection{Vector-valued functions of bounded variation}\label{sec:integration:bv}
Let $W$ be a Banach space over $\KK \in \{\RR,\CC\}$ and let $f: [a,b] \subset \RR \to W$ be a function. The {\bf total variation function $V_a(f): [a,b] \to [0,\infty]$} is defined by $V_a(f)(a) \DEF 0$ and
\begin{equation}
  \label{eq:integration:V}
  V_a(f)(t) \DEF \sup_{P(a,t)}{\sum_{j=1}^N{\|f(\sigma_j) - f(\sigma_{j-1})\|}}, \qquad \forall\,t \in (a,b],
\end{equation}
where $P(a,t) = ([\sigma_{i-1},\sigma_i])_{i=1}^N$ is a {\bf partition} of $[a,t]$,
\begin{displaymath}
  a = \sigma_0 < \sigma_1 < \ldots < \sigma_N = t.
\end{displaymath}
The norm $\|\cdot\|$ appearing in the sum is the norm of the space $W$. When $V_a(f)(b) < \infty$ we say that $f$ is of {\bf bounded variation}. The first lemma implies that $V_a(f)$ is non-decreasing on $[a,b]$.
\begin{lemma}
  \label{lem:integration:1}
  Let $c \in [a,b]$. Then $V_a(f)(b) = V_a(f)(c) + V_c(f)(b)$.
\end{lemma}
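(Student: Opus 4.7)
The plan is to prove the two inequalities $V_a(f)(b) \le V_a(f)(c) + V_c(f)(b)$ and $V_a(f)(b) \ge V_a(f)(c) + V_c(f)(b)$ separately, which together yield the desired equality in $[0,\infty]$. The boundary cases $c = a$ and $c = b$ are immediate from the convention $V_t(f)(t) = 0$, so I would assume $c \in (a,b)$.

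For the inequality $V_a(f)(b) \le V_a(f)(c) + V_c(f)(b)$, I would pick any partition $P = (\sigma_0, \ldots, \sigma_N)$ of $[a,b]$ and consider the refinement $P'$ obtained by inserting $c$ if it is not already among the $\sigma_i$. Since refining a partition can only increase the variation sum (by a single application of the triangle inequality at the subinterval containing $c$, namely $\|f(\sigma_j) - f(\sigma_{j-1})\| \le \|f(\sigma_j) - f(c)\| + \|f(c) - f(\sigma_{j-1})\|$), the sum for $P$ is bounded by the sum for $P'$. The refined partition $P'$ then splits naturally into a partition $P'_1$ of $[a,c]$ and a partition $P'_2$ of $[c,b]$, whose individual variation sums are bounded by $V_a(f)(c)$ and $V_c(f)(b)$ respectively. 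Taking the supremum over $P$ yields the first inequality.

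For the reverse inequality $V_a(f)(b) \ge V_a(f)(c) + V_c(f)(b)$, I would take arbitrary partitions $P_1$ of $[a,c]$ and $P_2$ of $[c,b]$; their concatenation is a partition of $[a,b]$ whose variation sum is exactly the sum of the variation sums of $P_1$ and $P_2$. Hence this sum is bounded above by $V_a(f)(b)$. Taking the supremum first over $P_2$ and then over $P_1$ (using that the supremum of a sum equals the sum of suprema when the two terms depend on independent variables) delivers the inequality.

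No step here is a serious obstacle; the main point to handle carefully is simply the definition of the variation when $c$ coincides with an endpoint (handled by convention) and the observation that refining always increases the variation sum, which is the only place the triangle inequality enters. Since all estimates are made in $[0,\infty]$ and no finiteness of $V_a(f)(b)$ was assumed, the proof works uniformly whether or not $f$ is of bounded variation.
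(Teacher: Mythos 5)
Your proposal is correct and follows essentially the same route as the paper: the triangle inequality applied at the subinterval containing $c$ (equivalently, refining by inserting $c$) gives one inequality, and concatenating partitions of $[a,c]$ and $[c,b]$ gives the other. Your explicit treatment of the endpoint cases and the remark that the argument works in $[0,\infty]$ without assuming bounded variation are minor additions the paper leaves implicit.
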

\begin{proof}
  Let $P(a,b) = ([\sigma_{i-1},\sigma_i])_{i=1}^N$ be a partition of $[a,b]$. There exists an index $j_0 \in \{1,\ldots,N\}$ such that $\sigma_{j_0 - 1} \le c \le \sigma_{j_0}$. Hence
  \begin{align*}
    \sum_{j=1}^N{\|f(\sigma_j)} - f(\sigma_{j-1})\| &= \sum_{j \neq j_0}{\|f(\sigma_j) - f(\sigma_{j-1})\|} + \|f(\sigma_{j_0}) - f(c) + f(c) - f(\sigma_{j_0 - 1})\|\\
                                                    &\le \sum_{j \neq j_0}{\|f(\sigma_j) - f(\sigma_{j-1})\|} + \|f(c) - f(\sigma_{j_0 - 1})\| + \|f(\sigma_{j_0}) - f(c)\|\\
                                                    &\le  V_a(f)(c) + V_c(f)(b).
  \end{align*}
  For the reverse inequality we may without loss of generality assume that $c \in (a,b)$. Let $P(a,c) = ([\sigma_{i-1},\sigma_i])_{i=1}^N$ be a partition of $[a,c]$ and let $Q(c,b) = ([\tau_{i-1},\tau_i])_{i=1}^M$ be a partition of $[c,b]$. Then the points
  \begin{displaymath}
    a = \sigma_0 < \sigma_1 < \ldots < \sigma_N = c = \tau_0 < \tau_1 < \ldots < \tau_M = b
  \end{displaymath}
  determine a partition of $[a,b]$. Hence
  \begin{align*}
    \sum_{i=1}^N{\|f(\sigma_i) - f(\sigma_{i-1})  \|} + \sum_{j=1}^M{\|f(\tau_j) - f(\tau_{j-1})  \|} \le V_a(f)(b),
  \end{align*}
  and the result follows by taking the supremum over all such $P$ and $Q$.
\end{proof}
\begin{corollary}
  \label{cor:integration:1}
  If $f : [a,b] \to W$ is of bounded variation, then $f$ is bounded.
\end{corollary}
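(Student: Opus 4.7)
The plan is to derive a uniform bound $\|f(t)\| \le \|f(a)\| + V_a(f)(b)$ for all $t \in [a,b]$ by using the trivial two-point partition together with the monotonicity of the total variation function that is provided by the preceding lemma.

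First I would dispose of the trivial case $t = a$, where there is nothing to prove. For $t \in (a,b]$, I would apply the defining formula \eqref{eq:integration:V} of $V_a(f)(t)$ to the single-interval partition $P(a,t) = ([a,t])$, which immediately yields the estimate
\[
\|f(t) - f(a)\| \le V_a(f)(t).
\]
Next, from \cref{lem:integration:1} applied with the role of $b$ played by $t$ or, equivalently, with the decomposition $V_a(f)(b) = V_a(f)(t) + V_t(f)(b)$, together with the non-negativity of $V_t(f)(b)$, one obtains $V_a(f)(t) \le V_a(f)(b)$. Combining this with the previous display and the hypothesis that $V_a(f)(b) < \infty$ gives
\[
\|f(t)\| \le \|f(a)\| + \|f(t) - f(a)\| \le \|f(a)\| + V_a(f)(b),
\]
uniformly in $t \in [a,b]$, proving boundedness.

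There is really no serious obstacle here; the proof is a two-line deduction from the additivity lemma. The only very minor subtlety is remembering that the right-hand side of \eqref{eq:integration:V} is a supremum over \emph{all} partitions, so that a single admissible partition (here, the trivial one) already furnishes a lower bound for $V_a(f)(t)$, which is precisely what is needed.
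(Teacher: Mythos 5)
Your proof is correct and rests on exactly the same inequality the paper uses, namely $\|f(t)-f(a)\| \le V_a(f)(t) \le V_a(f)(b)$ via the trivial partition and \cref{lem:integration:1}; the paper merely phrases it as a contradiction argument while you state the direct uniform bound. No substantive difference.
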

\begin{proof}
  Suppose that $f$ is not bounded, then there exists a sequence $(t_n)_n$ of points in $[a,b]$ such that $\|f(t_n) - f(a)\| \ge n$ for all $n \in \NN$. On the other hand,
  \begin{displaymath}
    V_a(f)(b) \ge V_a(f)(t_n) \ge \|f(t_n) - f(a)\| \qquad \forall\,n \in \NN
  \end{displaymath}
  showing that $f$ cannot be of bounded variation.
\end{proof}
Verification of the following result is trivial.
\begin{lemma}\label{lem:integration:2}  
  For all $f,g: [a,b] \to W$ of bounded variation and all $\alpha \in \KK$,
  \begin{enumerate}
  \item[\textup{(i)}] 
    $V_a(f + g)(t) \le V_a(f)(t) + V_a(g)(t)$ for all $t \in [a,b]$,
  \item[\textup{(ii)}]
    $V_a(\alpha f) = |\alpha| V_a(f)$.
  \end{enumerate}
  Hence the set $\BV([a,b],W)$ of $W$-valued functions of bounded variation with pointwise addition and scalar multiplication is a vector space over $\KK$.
\end{lemma}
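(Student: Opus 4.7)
The plan is to verify the two estimates pointwise on partitions and then pass to the supremum; both (i) and (ii) are direct consequences of the triangle inequality and homogeneity of the norm on $W$.

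First I would fix $t \in [a,b]$ and an arbitrary partition $P(a,t) = ([\sigma_{i-1},\sigma_i])_{i=1}^N$. For (i), the triangle inequality in $W$ gives
\[
  \|(f+g)(\sigma_j) - (f+g)(\sigma_{j-1})\| \le \|f(\sigma_j) - f(\sigma_{j-1})\| + \|g(\sigma_j) - g(\sigma_{j-1})\|
\]
for each $j = 1,\ldots,N$. Summing over $j$ yields
\[
  \sum_{j=1}^N \|(f+g)(\sigma_j) - (f+g)(\sigma_{j-1})\| \le V_a(f)(t) + V_a(g)(t),
\]
since each of the two partial sums on the right is bounded above by the corresponding total variation on $[a,t]$. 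Taking the supremum over all partitions $P(a,t)$ in the definition \cref{eq:integration:V} delivers (i).

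For (ii), the homogeneity $\|\alpha w\| = |\alpha|\,\|w\|$ for $w \in W$ yields, for any partition $P(a,t)$,
\[
  \sum_{j=1}^N \|(\alpha f)(\sigma_j) - (\alpha f)(\sigma_{j-1})\| = |\alpha| \sum_{j=1}^N \|f(\sigma_j) - f(\sigma_{j-1})\|.
\]
Taking suprema over all partitions on both sides (using that multiplication by the nonnegative constant $|\alpha|$ commutes with the supremum) gives the equality $V_a(\alpha f)(t) = |\alpha| V_a(f)(t)$.

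Finally, to conclude that $\BV([a,b],W)$ is a $\KK$-vector space, I would note that (i) at $t = b$ shows closure under addition (if $V_a(f)(b)$ and $V_a(g)(b)$ are finite, so is $V_a(f+g)(b)$) and (ii) at $t = b$ shows closure under scalar multiplication, while the vector space axioms are inherited pointwise from $W$. The zero function has $V_a(0) \equiv 0$ and thus lies in $\BV([a,b],W)$. No step poses a genuine obstacle; the whole argument is mechanical once the triangle inequality is applied term by term.
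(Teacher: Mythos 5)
Your proof is correct and is precisely the routine verification the paper alludes to when it states that ``Verification of the following result is trivial'' and omits the argument: term-by-term triangle inequality plus passage to the supremum for (i), homogeneity of the norm and commutation of the supremum with multiplication by $|\alpha|\ge 0$ for (ii), and the closure properties at $t=b$ for the vector space conclusion. There is nothing to add or correct.
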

We observe that the product of two elements of $\BV([a,b],W)$ is not defined, unless $W$ admits an appropriate multiplication. We also note that the decomposition theorem for the case that $W = \RR$ that is often used to prove properties of functions of bounded variation (see e.g. \cite[Theorem 1.4 in Appendix I]{Diekmann1995}) does not have a direct analogue, unless we endow $W$ with an order structure. Instead, we exploit the fact that the continuity properties of $f \in \BV([a,b],W)$ are closely related to the continuity properties of the associated variation function, as the next lemma shows.
\begin{lemma}
  \label{lem:integration:3}
  Let $f \in \BV([a,b],W)$.
  \begin{enumerate}
  \item[\textup{(i)}]
    $f$ is left-continuous at $c \in (a,b]$ if and only if $V_a(f)$ is left-continuous at $c$.
  \item[\textup{(ii)}]
    $f$ is right-continuous at $c \in [a,b)$ if and only if $V_a(f)$ is right-continuous at $c$.
  \item[\textup{(iii)}]
    $f$ is continuous at $c \in (a,b)$ if and only if $V_a(f)$ is continuous at $c$.
  \end{enumerate}
\end{lemma}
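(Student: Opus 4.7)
The plan is to deduce all three continuity equivalences from a single key estimate together with a refinement argument, using Lemma \ref{lem:integration:1} to pass between variation over $[a,t]$ and variation over sub-intervals. The basic inequality I will repeatedly use is that for any partition $a = \sigma_0 < \ldots < \sigma_N = t$ of $[a,t]$ we have $\|f(\sigma_j) - f(\sigma_{j-1})\| \le V_{\sigma_{j-1}}(f)(\sigma_j) = V_a(f)(\sigma_j) - V_a(f)(\sigma_{j-1})$, which comes from Lemma \ref{lem:integration:1}. In particular, for any $s < t$ in $[a,b]$,
\[
  \|f(t) - f(s)\| \;\le\; V_s(f)(t) \;=\; V_a(f)(t) - V_a(f)(s).
\]

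This immediately gives the ``only if'' directions in (i) and (ii): if $V_a(f)$ is left-continuous at $c$, then letting $s \uparrow c$ in the displayed estimate with $t = c$ shows that $f$ is left-continuous at $c$, and symmetrically for right-continuity using $\|f(t)-f(c)\| \le V_a(f)(t) - V_a(f)(c)$ with $t \downarrow c$.

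For the ``if'' direction of (i), fix $c \in (a,b]$ and $\EPS > 0$, and assume $f$ is left-continuous at $c$. I would choose a partition $a = \sigma_0 < \ldots < \sigma_N = c$ of $[a,c]$ whose variation sum exceeds $V_a(f)(c) - \EPS/2$; by left-continuity of $f$ at $c$, refining by inserting a point $\sigma_{N-1}' \in (c-\delta,c)$ (for $\delta$ small enough that $\|f(c) - f(\sigma_{N-1}')\| < \EPS/2$) only increases the variation sum, so without loss of generality $\sigma_{N-1}$ already lies in $(c - \delta, c)$. Splitting the final term $\|f(c)-f(\sigma_{N-1})\|$ off the sum and using $V_a(f)(\sigma_{N-1}) \ge \sum_{j=1}^{N-1}\|f(\sigma_j) - f(\sigma_{j-1})\|$ yields $V_a(f)(c) - V_a(f)(\sigma_{N-1}) < \EPS$; since $V_a(f)$ is non-decreasing (an immediate consequence of Lemma \ref{lem:integration:1}), the same bound holds for all $t \in [\sigma_{N-1}, c]$, proving left-continuity of $V_a(f)$ at $c$. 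The ``if'' direction of (ii) is entirely analogous, working instead with $V_c(f)(t) = V_a(f)(t) - V_a(f)(c)$ and a partition of $[c,b]$ with first point just to the right of $c$. Finally, (iii) is the conjunction of (i) and (ii).

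The main (and only real) obstacle is the refinement step in the converse direction: one must be careful that inserting the extra point truly does not decrease the variation sum, which follows because a single step $\|f(\sigma_j)-f(\sigma_{j-1})\|$ is bounded by $\|f(\sigma_j) - f(\sigma')\| + \|f(\sigma') - f(\sigma_{j-1})\|$ for any $\sigma' \in (\sigma_{j-1},\sigma_j)$. Everything else is a routine use of the additivity of total variation and the monotonicity of $V_a(f)$.
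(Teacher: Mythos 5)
Your argument is correct and is essentially the paper's own proof: the easy implication follows from the pointwise bound $\|f(t)-f(s)\| \le V_a(f)(t)-V_a(f)(s)$ via \cref{lem:integration:1}, and the converse uses a near-optimal partition refined near $c$, the triangle inequality to justify the refinement, and the monotonicity of $V_a(f)$. (Note only that you have swapped the labels ``if'' and ``only if'' relative to the statement of the lemma, but both implications are in fact established.)
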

\begin{proof}
  We prove only the first statement. Statement (ii) is proved analogously and statement (iii) follows from the first two since $f$ is continuous at $c \in (a,b)$ if and only if it is both left- and right-continuous there.
  \par
  \emph{Left-continuity of $V_a(f)$ implies left-continuity of $f$.} For $t \in [a,c]$ we have
  \begin{displaymath}
    \|f(c) - f(t)\| \le V_t(f)(c) = V_a(f)(c) - V_a(f)(t) \to 0 \text{ as } t \uparrow c,
  \end{displaymath}
  where \cref{lem:integration:1} was used in the equality. 
  \par
  \emph{Left-continuity of $f$ implies left-continuity of $V_a(f)$.} Let $\EPS > 0$ be given and let $P(a,c) = ([\sigma_{i-1},\sigma_i])_{i=1}^N$ be a partition of $[a,c]$ such that 
  \begin{equation}
    \label{eq:integration:12}
    \sum_{j=1}^N{\|f(\sigma_j) - f(\sigma_{j-1})\|} \ge V_a(f)(c) - \frac{\EPS}{2}.
  \end{equation}
  By left-continuity of $f$ at $c$ there exists $\delta > 0$ such that $c - \delta > \sigma_{N-1}$ and $t \in (c - \delta,c)$ implies
  \begin{equation}
    \label{eq:integration:11}
    \|f(c) - f(t)\| \le \frac{\EPS}{2}.
  \end{equation}
  By the triangle inequality it holds that
  \begin{displaymath}
    \|f(t) - f(\sigma_{N-1})\| + \|f(c) - f(t)\| \ge \|f(c) - f(\sigma_{N-1})\|,
  \end{displaymath}
  and together with \cref{eq:integration:12} this implies that
  \begin{displaymath}
    \sum_{j=1}^{N-1}{\|f(\sigma_j) - f(\sigma_{j-1})\|} + \|f(t) - f(\sigma_{N-1})\| + \|f(c) - f(t)\| \ge V_a(f)(c) - \frac{\EPS}{2}.
  \end{displaymath}
  Therefore, by \cref{eq:integration:11}, 
  \begin{displaymath}
     \sum_{j=1}^{N-1}{\|f(\sigma_j) - f(\sigma_{j-1})\|} + \|f(t) - f(\sigma_{N-1})\| \ge V_a(f)(c) - \EPS.
  \end{displaymath}
  Since the points $a = \sigma_0 < \sigma_1 < \ldots < \sigma_{N-1} < t$ determine a partition of $[a,t]$ it follows that $V_a(f)(t) \ge V_a(f)(c) - \EPS$. By monotonicity this concludes the proof.
\end{proof}
The following is a direct consequence of \cref{lem:integration:1,lem:integration:3}.
\begin{corollary}
  \label{cor:integration:2}
  The set of discontinuity points of $f \in \BV([a,b],W)$ is countable.
\end{corollary}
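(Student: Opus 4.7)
The plan is to reduce the statement to the classical fact that a monotone real-valued function on a bounded interval has at most countably many discontinuities. First I would observe that by \cref{lem:integration:3} (applying (i) and (ii) at the endpoints $a, b$ and (iii) in the interior), a point $c \in [a,b]$ is a discontinuity of $f$ if and only if it is a discontinuity of the scalar function $V_a(f) : [a,b] \to [0,\infty)$. Hence it suffices to prove that $V_a(f)$ has at most countably many discontinuities.

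Next I would use \cref{lem:integration:1} to see that $V_a(f)$ is nondecreasing: for any $a \le s \le t \le b$, applying the lemma to the point $s$ on the interval $[a,t]$ gives $V_a(f)(t) = V_a(f)(s) + V_s(f)(t) \ge V_a(f)(s)$, since $V_s(f)(t) \ge 0$ by \cref{eq:integration:V}. Because $f$ is of bounded variation, $V_a(f)$ is moreover bounded above by $V_a(f)(b) < \infty$.

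It remains to invoke the standard fact: a nondecreasing function $\phi : [a,b] \to \RR$ has at most countably many discontinuities. The argument is that monotonicity guarantees that at every discontinuity $c$ both one-sided limits $\phi(c-)$ and $\phi(c+)$ exist and $\phi(c+) - \phi(c-) > 0$, while the telescoping estimate
\[
  \sum_{c \in D_n} [\phi(c+) - \phi(c-)] \le \phi(b) - \phi(a)
\]
(valid for any finite subset of the discontinuity set $D$) forces the set $D_n \DEF \{c : \phi(c+) - \phi(c-) \ge 1/n\}$ to be finite for each $n \in \NN$, so $D = \bigcup_n D_n$ is countable.

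I do not expect any genuine obstacle here: the corollary is a straightforward combination of \cref{lem:integration:1,lem:integration:3} with a standard real-analysis fact. The only point worth a little care is making sure the reduction in the first step covers the endpoints $a$ and $b$ as well as the interior, which is why both (i) and (ii) of \cref{lem:integration:3} are needed in addition to (iii).
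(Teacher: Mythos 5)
Your proof is correct and is exactly the argument the paper intends: the paper states the corollary as ``a direct consequence of \cref{lem:integration:1,lem:integration:3}'', and your write-up simply spells out that reduction (monotonicity of $V_a(f)$ from \cref{lem:integration:1}, equivalence of continuity points from \cref{lem:integration:3}, and the classical countability of discontinuities of a bounded monotone function). Nothing further is needed.
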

For the next result we recall that $\LP{1}([a,b],W)$ denotes the Banach space of Bochner integrable $W$-valued functions on $[a,b]$, see \cite[Appendix C]{Engel2000} or \cite[\S 1.1]{Arendt2011}.
\begin{definition}
  \label{def:integration:4}
  The function $\chi_a \in \BV([a,b],\KK)$ denotes the characteristic function of $(a,b]$ defined by $\chi_a(a) \DEF 0$ and $\chi_a(t) \DEF 1$ for all $t \in (a,b]$. \hfill \QEDEX
\end{definition}
\begin{lemma}
  \label{lem:duality:1}
  Let $f : [a,b] \to W$ and suppose there exist $w \in W$ and $g \in \LP{1}([a,b],W)$ such that
  \begin{equation}
    \label{eq:duality:6}
    f(t) = \chi_a(t)w + \int_a^t{g(s)\,ds}, \qquad \forall\,t \in [a,b].
  \end{equation}
  Then $f \in \BV([a,b],W)$ and $V_a(f)(b) = \|w\| + \|g\|$.
\end{lemma}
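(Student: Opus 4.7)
The plan is to prove the equality by matching upper and lower bounds. Set $F(t) \DEF \int_a^t g(s)\,ds$, so that $f = \chi_a \cdot w + F$ on $[a,b]$, and denote by $\|g\|$ the $\LP{1}$-norm of $g$ over $[a,b]$.

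For the \emph{upper} bound, let $a = \sigma_0 < \sigma_1 < \cdots < \sigma_N = b$ be an arbitrary partition. Because $\chi_a(a) = 0$ while $\chi_a(t) = 1$ for $t > a$, telescoping gives $f(\sigma_1) - f(\sigma_0) = w + F(\sigma_1)$ and $f(\sigma_j) - f(\sigma_{j-1}) = F(\sigma_j) - F(\sigma_{j-1})$ for $j \ge 2$. Applying the inequality $\|\int_{\alpha}^{\beta} g(s)\,ds\| \le \int_{\alpha}^{\beta} \|g(s)\|\,ds$ termwise, the integral contributions telescope to produce
\[
\sum_{j=1}^N \|f(\sigma_j) - f(\sigma_{j-1})\| \le \|w\| + \int_a^b \|g(s)\|\,ds = \|w\| + \|g\|,
\]
so $V_a(f)(b) \le \|w\| + \|g\| < \infty$ and in particular $f \in \BV([a,b],W)$.

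For the \emph{lower} bound, fix $\EPS > 0$; the strategy is to localise the contribution $\|w\|$ near $a$ and to recover the contribution $\|g\|$ from a step-function approximation of $g$. By absolute continuity of the Bochner integral, choose $\sigma_1 \in (a,b)$ with $\int_a^{\sigma_1} \|g(s)\|\,ds < \EPS$; then $\|f(\sigma_1) - f(a)\| = \|w + F(\sigma_1)\| \ge \|w\| - \EPS$. The crux of the argument is the claim that $V_{\sigma_1}(F)(b) \ge \|g\| - 3\EPS$. To establish it, exploit Bochner integrability to find a $W$-valued step function $g_{\EPS} = \sum_{i=1}^m y_i \chi_{[t_{i-1}, t_i)}$, adapted to a subdivision $\sigma_1 = t_0 < t_1 < \cdots < t_m = b$, with $\|g - g_{\EPS}\|_{\LP{1}([\sigma_1,b])} < \EPS$. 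Writing $F_{\EPS}(t) \DEF \int_{\sigma_1}^t g_{\EPS}(s)\,ds$, the partition $\{t_i\}$ realises the variation sum for $F_{\EPS}$ exactly as $\sum_i \|y_i\|(t_i - t_{i-1}) = \|g_{\EPS}\|_{\LP{1}([\sigma_1,b])}$. The reverse triangle inequality in $W$, together with $\|\int h\| \le \int \|h\|$ applied to $h = g - g_{\EPS}$ on each subinterval, yields
\[
\Bigl| \sum_i \|F(t_i) - F(t_{i-1})\| - \sum_i \|F_{\EPS}(t_i) - F_{\EPS}(t_{i-1})\| \Bigr| < \EPS,
\]
and combined with $\|g\|_{\LP{1}([\sigma_1,b])} \ge \|g\| - \EPS$ this produces the stated lower bound.

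Since $f - F$ is constant on $[\sigma_1,b]$, we have $V_{\sigma_1}(f)(b) = V_{\sigma_1}(F)(b)$, and \cref{lem:integration:1} gives
\[
V_a(f)(b) = V_a(f)(\sigma_1) + V_{\sigma_1}(f)(b) \ge \|f(\sigma_1) - f(a)\| + V_{\sigma_1}(F)(b) \ge \|w\| + \|g\| - 4\EPS.
\]
Letting $\EPS \downarrow 0$ concludes the proof. The chief obstacle is the identification $V_{\sigma_1}(F)(b) = \int_{\sigma_1}^b \|g(s)\|\,ds$: the inequality ``$\le$'' is routine but ``$\ge$'' rests on approximating the Bochner-integrable $g$ by $W$-valued step functions in $\LP{1}$ (a consequence of Pettis' measurability theorem) and on transferring the exact equality for step functions to the general case using only the triangle inequality, so that no Radon--Nikod\'{y}m hypothesis on $W$ is required.
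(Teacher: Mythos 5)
Your proof is correct, but for the lower bound it takes a genuinely different and more self-contained route than the paper. The paper outsources the key identity $V_a(\gamma)(b) = \|g\|_{\LP{1}}$ for the indefinite Bochner integral $\gamma(t) = \int_a^t g(s)\,ds$ to a cited result (Arendt et al., Proposition 1.2.2d), obtains the upper bound from subadditivity of the variation, and recovers the jump contribution $\|w\|$ by inserting an auxiliary point $\tau \in (a,\sigma_1)$ into an \emph{arbitrary} partition and letting $\tau \downarrow a$, using the continuity of $\gamma$, before taking the supremum. You instead prove the identity for the integral term from scratch: you isolate the jump by splitting at a point $\sigma_1$ with $\int_a^{\sigma_1}\|g\| < \EPS$, invoke additivity of the variation (\cref{lem:integration:1}), and establish $V_{\sigma_1}(F)(b) \ge \|g\|_{\LP{1}([\sigma_1,b])} - 2\EPS$ by $\LP{1}$-approximation of $g$ with $W$-valued step functions, for which the variation sum over the adapted subdivision is computed exactly; the reverse triangle inequality transfers the estimate to $F$. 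All steps check out (the existence of step-function approximants for Bochner integrable functions on an interval is standard, and your observation that $f - F$ is constant on $[\sigma_1,b]$ correctly reduces $V_{\sigma_1}(f)(b)$ to $V_{\sigma_1}(F)(b)$). What your approach buys is independence from the cited measure-theoretic proposition, at the cost of a longer $\EPS$-argument; the paper's version is shorter but leans on the reference for exactly the point you identify as the crux, namely the inequality $V(F) \ge \int \|g\|$.
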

\begin{proof}
  Define $\gamma : [a,b] \to W$ by
  \begin{displaymath}
    \gamma(t) \DEF \int_a^t{g(s)\,ds}, \quad \forall t \in [a,b].
  \end{displaymath}
  We clearly have $V_a(\chi_aw)(b) = \|w\|$. Also, by \cite[Proposition 1.2.2d]{Arendt2011} we see that $\gamma \in \BV([a,b],W)$ and $V_a(\gamma)(b) = \|g\|$. Hence \cref{lem:integration:2} implies that
  \begin{equation}
    \label{eq:integration:9}
    V_a(f)(b) \le V_a(\chi_a)(b) + V_a(\gamma)(b) = \|w\| + \|g\|,
  \end{equation}
  so in particular $f \in \BV([a,b],W)$.
  \par
  To see that \cref{eq:integration:9} is in fact an equality, as usual let $P(a,b) = ([\sigma_{i-1},\sigma_i])_{i=1}^N$ be an arbitrary partition of $[a,b]$ and suppose $a < \tau < \sigma_1$. Then 
  \begin{align*}
    V_a(f)(b) &\ge \|f(\tau) - f(a)\| + \|f(\sigma_1) - f(\tau)\| + \sum_{j=2}^N{\|f(\sigma_j) - f(\sigma_{j-1})\|}\\
    &= \|w + \gamma(\tau)\| + \|\gamma(\sigma_1) - \gamma(\tau)\| + \sum_{j=2}^N{\|\gamma(\sigma_j) - \gamma(\sigma_{j-1})\|},
  \end{align*}
  where the sums are understood to be zero in case $N = 1$. Letting $\tau \downarrow 0$ and using the continuity of $\gamma$, we find that
  \begin{displaymath}
    V_a(f)(b) \ge \|w\| + \sum_{j=1}^N{\|\gamma(\sigma_j) - \gamma(\sigma_{j-1})\|}.
  \end{displaymath}
  Upon taking the supremum over all partitions $P(a,b)$ we obtain \cref{eq:integration:9} with equality.
\end{proof}

\subsection{Bilinear Riemann-Stieltjes integration}\label{sec:integration:rs}
Let $V$, $W$ and $Z$ be Banach spaces over $\KK \in \{\RR, \CC\}$. We assume the existence of a continuous bilinear product $\cdot$ on $V \times W$ with values in $Z$,
\begin{displaymath}
  V \times W \ni (v,w) \mapsto v \cdot w \in Z. 
\end{displaymath}
For example, we may take $W = \STAR{V}$ and $Z = \KK$ and define $v \cdot \STAR{v} \DEF \PAIR{v}{\STAR{v}}$, so the product is just the duality pairing between $V$ and $\STAR{V}$. Note that in contrast with \cite[\S 1]{Gowurin1936} we do not assume that the product is symmetric.
\par
Fix an interval $[a,b]$. We define a {\bf tagged partition} $P$ of the interval $[a,b]$ to be a partition $([\sigma_{j-1},\sigma_j])_{j=1}^N$ of $[a,b]$ together with a finite sequence of {\bf sample points} $\tau_j \in [\sigma_{j-1},\sigma_j]$ for $j = 1,\ldots,N$. This will be denoted by $P = ([\sigma_{j-1},\sigma_j],\tau_j)_{j=1}^N$. \emph{From here onward all partitions will be tagged.} The {\bf mesh width} of $P$ is defined as
\begin{displaymath}
  |P| \DEF \max_{1 \le j \le N}{(\sigma_j - \sigma_{j-1})}.
\end{displaymath}
Let $f : [a,b] \to V$ and $\eta : [a,b] \to W$ be given. For every tagged partition $P = ([\sigma_{j-1},\sigma_j],\tau_j)_{j=1}^N$ we introduce the {\bf Riemann-Stieltjes sums}
\begin{displaymath}
  S(f,P,d\eta) \DEF \sum_{j=1}^N{f(\tau_j)\cdot(\eta(\sigma_j) - \eta(\sigma_{j-1}))}
\end{displaymath}
and
\begin{displaymath}
  S(df,P,\eta) \DEF \sum_{j=1}^N{(f(\sigma_j) - f(\sigma_{j-1}))\cdot\eta(\tau_j)}.
\end{displaymath}
We note that if the bilinear product on $V \times W$ is symmetric (e.g. in the scalar case), then $S(df,P,\eta) = S(\eta,P,df)$ and there is no need to introduce a second Riemann-Stieltjes sum.
\begin{definition}
  \label{def:integration:1}
  If there exists $z \in Z$ with the property that for every $\EPS > 0$ there exists $\delta > 0$ such that $|P| \le \delta$ implies $\|S(f,P,d\eta) - z\| \le \EPS$ for any tagged partition $P$ of $[a,b]$, then {\bf $f$ is Riemann-Stieltjes integrable with respect to $\eta$} over $[a,b]$, written as $f \in S(\eta)$, and
\begin{displaymath}
  \int_a^b{f\,d\eta} \DEF z
\end{displaymath}
is the {\bf Riemann-Stieltjes integral of $f$ with respect to $\eta$}. Similarly, if there exists $z \in Z$ with the property that for every $\EPS > 0$ there exists $\delta > 0$ such that $|P| \le \delta$ implies $\|S(df,P,\eta) - z\| \le \EPS$ for any tagged partition $P$ of $[a,b]$, then {\bf $\eta$ is Riemann-Stieltjes integrable with respect to $f$} over $[a,b]$, written as $\eta \in S(f)$, and
\begin{displaymath}
  \int_a^b{df\,\eta} \DEF z
\end{displaymath}
is the {\bf Riemann-Stieltjes integral of $\eta$ with respect to $f$}. \hfill \QEDEX
\end{definition}
Often we will abbreviate the names \emph{Riemann} and \emph{Stieltjes} by the initials RS. It is trivial to check that the RS integral enjoys the usual bilinearity properties in $f$ and $\eta$. Also, if $c \in (a,b)$ and $\int_a^b{f\,d\eta}$ exists, then the integrals $\int_a^c{f\,d\eta}$ and $\int_c^b{f\,d\eta}$ exist as well and their sum equals the first integral. However, it is a shortcoming of the RS integral that the converse is generally \emph{not} true when $f$ and $\eta$ are allowed to be discontinuous. This phenomenon already occurs in the scalar case \cite[end of Appendix H]{Bartle2001}. 
\begin{remark}
  \label{rem:integration:1}
  In the special case that $Z = V$, $W = \KK$ and $v \cdot \alpha \DEF \alpha v$ for $v \in V$ and $\alpha \in \KK$, we recover the definition of the ordinary (but still $V$-valued) {\bf Riemann integral} upon setting $\eta(t) \DEF t$ for all $t \in [a,b]$. In that case, if $f \in S(\eta)$ then we write $\int_a^b{f\,d\eta} = \int_a^b{f(t)\,dt}$ as usual. \hfill \QEDEX
\end{remark}
The following lemma, implicitly used in the proof of \cite[Theorem 1.7 in Appendix I]{Diekmann1995} and stated as an exercise in \cite[Theorem H.2]{Bartle2001} for the case of scalar-valued, bounded $f$ and $\eta$, is important for the establishment of the existence of the RS integral.
\begin{lemma}[Cauchy criterion]
  \label{lem:integration:5}
  Let $f : [a,b] \to V$ and $\eta : [a,b] \to W$. Then $f \in S(\eta)$ if and only if for every $\EPS' > 0$ there exists $\delta' > 0$ such that if $P$ and $Q$ are tagged partitions of $[a,b]$ with $|P| \le \delta'$ and $|Q| \le \delta'$ then $\|S(f,P,d\eta) - S(f,Q,d\eta)\| \le \EPS'$. An analogous equivalence holds for RS integrability of $\eta$ with respect to $f$.
\end{lemma}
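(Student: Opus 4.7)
The plan is to prove both implications separately, using the completeness of the Banach space $Z$ for the harder direction. I will write up the argument for the first equivalence (integrability of $f$ with respect to $\eta$) since the second one is entirely analogous with $S(f,P,d\eta)$ replaced by $S(df,P,\eta)$.

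For the forward direction, suppose $f \in S(\eta)$ with integral $z \DEF \int_a^b{f\,d\eta}$. Given $\EPS' > 0$, apply \cref{def:integration:1} with $\EPS = \EPS'/2$ to obtain $\delta > 0$ such that $|P| \le \delta$ implies $\|S(f,P,d\eta) - z\| \le \EPS'/2$ for every tagged partition $P$. Taking $\delta' \DEF \delta$ and applying the triangle inequality through $z$ yields the Cauchy-type estimate.

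For the reverse direction, the strategy is to produce a candidate limit via completeness of $Z$ and then verify it satisfies \cref{def:integration:1}. First I would pick a sequence $\delta_n \downarrow 0$ (say $\delta_n \DEF 1/n$) and for each $n$ choose any tagged partition $P_n$ of $[a,b]$ with $|P_n| \le \delta_n$; such partitions always exist (e.g.\ equidistant ones). The hypothesis then forces the sequence $(S(f,P_n,d\eta))_n$ to be Cauchy in $Z$: for $\EPS' > 0$ pick $\delta' > 0$ as supplied by the hypothesis and then $N \in \NN$ with $1/N \le \delta'$, so that for all $n,m \ge N$ both mesh widths are at most $\delta'$ and hence $\|S(f,P_n,d\eta) - S(f,P_m,d\eta)\| \le \EPS'$. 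By completeness of $Z$ there exists $z \in Z$ with $S(f,P_n,d\eta) \to z$.

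It remains to show that this $z$ serves as the Riemann-Stieltjes integral. Given $\EPS > 0$, use the hypothesis with $\EPS' \DEF \EPS/2$ to obtain $\delta' > 0$, and then choose $n$ large enough that simultaneously $|P_n| \le \delta'$ and $\|S(f,P_n,d\eta) - z\| \le \EPS/2$. For any tagged partition $P$ of $[a,b]$ with $|P| \le \delta'$ the triangle inequality gives
\[
  \|S(f,P,d\eta) - z\| \le \|S(f,P,d\eta) - S(f,P_n,d\eta)\| + \|S(f,P_n,d\eta) - z\| \le \frac{\EPS}{2} + \frac{\EPS}{2} = \EPS,
\]
so $f \in S(\eta)$ with integral $z$, as required. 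The whole argument is routine, and I do not foresee a substantive obstacle; the only mild subtlety is that the candidate $z$ is constructed from one specific sequence of partitions, but the final estimate confirms independence of this choice. The analogue for $\eta \in S(f)$ goes through verbatim after substituting $S(df,P,\eta)$ for $S(f,P,d\eta)$ throughout.
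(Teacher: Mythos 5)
Your argument is correct and follows essentially the same route as the paper: the forward direction via the triangle inequality through the integral, and the reverse direction by extracting a Cauchy sequence of Riemann--Stieltjes sums along partitions with mesh tending to zero, invoking completeness of $Z$, and then verifying the limit against an arbitrary fine partition. The paper simply declares the forward direction trivial where you write it out, but the substance is identical.
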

\begin{proof}
  We only prove the equivalence regarding RS integrability of $f$ with respect to $\eta$. The $\Leftarrow$ direction is trivial, so it remains to prove the reverse implication. Choose a sequence $(P_n)_n$ of tagged partitions such that $|P_n| \to 0$ as $n \to \infty$. Then the sequence $(S(f,P_n,d\eta))_n$ is Cauchy in the complete space $Z$. Denote its limit by $z$. Now let $\EPS > 0$ be given. By assumption there exists $\delta' > 0$ such that if $n \in \NN$ is chosen sufficiently large (as to guarantee that $|P_n| \le \delta'$) and $P$ is a tagged partition with $|P| \le \delta'$ then $\|S(f,P,d\eta) - S(f,P_n,d\eta)\| \le \frac{\EPS}{2}$. By choosing $n$ even larger if necessary, we also have $\|S(f,P_n,d\eta) - z\| \le \frac{\EPS}{2}$. Hence if $|P| \le \delta'$ then $\|S(f,P,d\eta) - z\| \le \EPS$ by the triangle inequality, as required.
\end{proof}
For a converse to the following theorem, we refer to \cite[\S 4]{Gowurin1936}.
\begin{theorem}
  \label{thm:integration:1}
  Let $\eta \in \BV([a,b],W)$. If $f \in S(\eta)$ then
  \begin{equation}
    \label{eq:integration:7}
    \Bigl\|\int_a^b{f\,d\eta}\Bigr\| \le \sup_{t \in [a,b]}\|f(t)\| V_a(\eta)(b).
  \end{equation}
  If $f \in C([a,b],V)$ then $f \in S(\eta)$.
\end{theorem}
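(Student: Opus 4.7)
The plan is to handle the two statements separately. For the estimate, I would work directly with the Riemann--Stieltjes sums and pass to the limit $|P|\to 0$. For the integrability of a continuous integrand, I would invoke the Cauchy criterion from \cref{lem:integration:5} together with the uniform continuity of $f$ on the compact interval $[a,b]$, controlling sums via a common refinement argument.

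For the bound \eqref{eq:integration:7}: Fix a tagged partition $P = ([\sigma_{j-1},\sigma_j],\tau_j)_{j=1}^N$ of $[a,b]$. Using the triangle inequality and the continuity of the bilinear product (with product norm at most $1$, which is the convention relevant for the duality-pairing applications in the main text), I would estimate
\begin{align*}
  \|S(f,P,d\eta)\|
  &\le \sum_{j=1}^N \|f(\tau_j) \cdot (\eta(\sigma_j) - \eta(\sigma_{j-1}))\|\\
  &\le \sum_{j=1}^N \|f(\tau_j)\|\,\|\eta(\sigma_j) - \eta(\sigma_{j-1})\|\\
  &\le \sup_{t \in [a,b]}\|f(t)\| \cdot V_a(\eta)(b),
\end{align*}
where the last step uses the definition \eqref{eq:integration:V} of the variation. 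Since $f \in S(\eta)$, the sums $S(f,P,d\eta)$ converge in $Z$ to $\int_a^b f\,d\eta$ as $|P| \to 0$, and continuity of the norm gives \eqref{eq:integration:7}.

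For the integrability of continuous $f$: Let $\EPS > 0$. Compactness of $[a,b]$ gives a modulus of continuity $\delta > 0$ such that $|s - t| \le \delta$ implies $\|f(s) - f(t)\| \le \EPS$. Let $P = ([\sigma_{j-1},\sigma_j],\tau_j)_{j=1}^N$ and $Q$ be two tagged partitions with $|P|,|Q| \le \delta$, and let $R$ denote their common refinement (also of mesh width at most $\delta$), which refines each subinterval $[\sigma_{j-1},\sigma_j]$ of $P$ into $[\rho_{j,i-1},\rho_{j,i}]$ with sample points $\xi_{j,i}$. Using the telescoping identity $\eta(\sigma_j) - \eta(\sigma_{j-1}) = \sum_i (\eta(\rho_{j,i}) - \eta(\rho_{j,i-1}))$, the contribution of the $j$-th block of $P$ to $S(f,P,d\eta) - S(f,R,d\eta)$ rewrites as
\[
\sum_i \bigl(f(\tau_j) - f(\xi_{j,i})\bigr) \cdot \bigl(\eta(\rho_{j,i}) - \eta(\rho_{j,i-1})\bigr).
\]
Since $\tau_j$ and $\xi_{j,i}$ both lie in $[\sigma_{j-1},\sigma_j]$, uniform continuity gives $\|f(\tau_j) - f(\xi_{j,i})\| \le \EPS$, and summing the norms over all $j$ and $i$ and using the definition of variation yields $\|S(f,P,d\eta) - S(f,R,d\eta)\| \le \EPS \, V_a(\eta)(b)$. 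The same argument applies to $Q$, and the triangle inequality gives $\|S(f,P,d\eta) - S(f,Q,d\eta)\| \le 2\EPS \, V_a(\eta)(b)$. Since $\EPS$ is arbitrary and $V_a(\eta)(b) < \infty$, \cref{lem:integration:5} yields $f \in S(\eta)$.

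The only mildly delicate step is the common-refinement bookkeeping in the second part; after setting up the notation for the refinement, the telescoping trick makes the difference of sums visibly a variation-type quantity controlled by the oscillation of $f$. The rest is direct and parallels the classical scalar Riemann--Stieltjes proof.
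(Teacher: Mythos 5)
Your proposal is correct and follows essentially the same route as the paper, which proves the estimate by passing to the limit in the Riemann--Stieltjes sums and establishes integrability of continuous $f$ via the Cauchy criterion of \cref{lem:integration:5} and the standard common-refinement/uniform-continuity argument from the scalar case. Your explicit remark that \cref{eq:integration:7} presupposes the bilinear product has norm at most one (as for the duality pairing) is a sensible clarification that the paper leaves implicit.
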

\begin{proof}
  The inequality follows trivially by considering an approximating sequence $(P_n)_n$ of tagged partitions and using the definition of the total variation. The proof of RS integrability of $f \in C([a,b],V)$ with respect to $\eta$ uses \cref{lem:integration:5} and is a direct generalization of the proof for the scalar case, see \cite[Theorem 1.7 in Appendix I]{Diekmann1995} or \cite[Theorem H.3]{Bartle2001}. 
\end{proof}
The previous theorem implies that every $\eta \in \BV([a,b],\STAR{V})$ defines an element of the dual space of $C([a,b],V)$. Our interest in RS integrals stems mainly from the fact that the converse holds as well, which is the content of the upcoming \cref{thm:integration:2}.
\begin{definition}
  \label{def:integration:3}
  A function $\eta \in \BV([a,b],W)$ is said to be of {\bf normalized bounded variation} if $\eta(0) = 0$ and $\eta$ is continuous from the right on the open interval $(a,b)$. Let $\NBV([a,b],W)$ be the normed vector space of all functions of normalized bounded variation, endowed with the total variation norm $\|\eta\| \DEF V_a(\eta)(b)$. \hfill \QEDEX
\end{definition}
\begin{theorem}[{Riesz, Gowurin \cite[\S 6]{Gowurin1936}}]
  \label{thm:integration:2}
  For every $\STAR{f} \in \STAR{C([a,b],V)}$ there exists a unique $\eta \in \NBV([a,b],\STAR{V})$ such that 
  \begin{displaymath}
    \PAIR{f}{\STAR{f}} = \int_a^b{f\,d\eta}, \qquad \forall\,f \in C([a,b],V),
  \end{displaymath}
  and $\|\STAR{f}\| = \|\eta\|$. In particular, $\NBV([a,b],\STAR{V})$ is a Banach space.
\end{theorem}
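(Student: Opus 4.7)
The plan is to adapt the classical scalar Riesz representation argument to the vector-valued setting, using Hahn-Banach on two levels: to extend $\STAR{f}$ to the larger space $B([a,b],V)$ of bounded $V$-valued functions (with supremum norm), and to realize the dual norms of the increments of the to-be-constructed $\eta \in \NBV([a,b],\STAR{V})$.

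For existence, let $\tilde{\STAR{f}} \in \STAR{B([a,b],V)}$ be a norm-preserving Hahn-Banach extension of $\STAR{f}$, viewing $C([a,b],V)$ as an isometric subspace of $B([a,b],V)$. For each $t \in (a,b]$ and $v \in V$, define $\PAIR{v}{\eta_0(t)} \DEF \PAIR{\chi_{(a,t]}v}{\tilde{\STAR{f}}}$ and $\eta_0(a) \DEF 0$; the bound $|\PAIR{v}{\eta_0(t)}| \le \|\STAR{f}\|\|v\|$ shows that $\eta_0(t) \in \STAR{V}$ with $\|\eta_0(t)\| \le \|\STAR{f}\|$. To control the total variation, for any partition $a = \sigma_0 < \cdots < \sigma_N = b$ and any $\EPS > 0$ I would choose $v_j \in V$ with $\|v_j\| \le 1$ and $\alpha_j \in \KK$ with $|\alpha_j| = 1$ such that $\alpha_j \PAIR{v_j}{\eta_0(\sigma_j) - \eta_0(\sigma_{j-1})} \ge \|\eta_0(\sigma_j) - \eta_0(\sigma_{j-1})\| - \EPS/N$. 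The step function $\phi \DEF \sum_j \alpha_j \chi_{(\sigma_{j-1},\sigma_j]}v_j$ lies in $B([a,b],V)$ with $\|\phi\|_\infty \le 1$, and additivity of $\tilde{\STAR{f}}$ on the disjoint pieces yields $\PAIR{\phi}{\tilde{\STAR{f}}} = \sum_j \alpha_j \PAIR{v_j}{\eta_0(\sigma_j) - \eta_0(\sigma_{j-1})}$. Combining these estimates and letting $\EPS \downarrow 0$ gives $V_a(\eta_0)(b) \le \|\STAR{f}\|$.

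To reach NBV, I would define $\eta$ by two modifications of $\eta_0$: replace $\eta_0(t)$ by $\lim_{s \downarrow t}\eta_0(s)$ for $t \in (a,b)$ to ensure right-continuity, and absorb the functional $\mu_a(v) \DEF \PAIR{\chi_{\{a\}}v}{\tilde{\STAR{f}}}$ as a jump at the left endpoint, i.e., add $\mu_a$ to the values of $\eta$ on $(a,b]$ while keeping $\eta(a) \DEF 0$; both adjustments preserve the bounded-variation bound, so $\eta \in \NBV([a,b],\STAR{V})$. For $f \in C([a,b],V)$, approximate uniformly by step functions $\phi_n \DEF f(a)\chi_{\{a\}} + \sum_j f(\tau_j^n)\chi_{(\sigma_{j-1}^n,\sigma_j^n]}$, where $P_n$ is a tagged partition with mesh $|P_n| \to 0$ and $\tau_1^n = a$. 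By uniform continuity $\phi_n \to f$ in $B([a,b],V)$, so $\PAIR{\phi_n}{\tilde{\STAR{f}}} \to \PAIR{f}{\STAR{f}}$; a direct computation using additivity of $\tilde{\STAR{f}}$ and the definitions of $\eta_0, \mu_a, \eta$ shows that $\PAIR{\phi_n}{\tilde{\STAR{f}}}$ equals the Riemann-Stieltjes sum $S(f, P_n, d\eta)$, which converges to $\int_a^b f\,d\eta$ by \cref{thm:integration:1}. Hence $\PAIR{f}{\STAR{f}} = \int_a^b f\,d\eta$.

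For uniqueness, if $\eta_1, \eta_2 \in \NBV$ both represent $\STAR{f}$ and $\eta \DEF \eta_1 - \eta_2$, then for each $t_0 \in (a,b)$ and $v \in V$ the continuous functions $f_n(s) \DEF g_n(s)v$, with $g_n$ a piecewise-linear scalar approximation of $\chi_{[a,t_0]}$ (equal to $1$ on $[a, t_0]$ and $0$ on $[t_0 + 1/n, b]$), satisfy $\int_a^b f_n\,d\eta \to \PAIR{v}{\eta(t_0)}$ by right-continuity of $\eta_1, \eta_2$ at $t_0$; since the integrals vanish, $\eta(t_0) = 0$, and the endpoints are fixed by normalization. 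The isometry $\|\eta\| = \|\STAR{f}\|$ combines $V_a(\eta)(b) \le \|\STAR{f}\|$ from construction with \cref{eq:integration:7}, and completeness of $\NBV([a,b],\STAR{V})$ then follows from completeness of $\STAR{C([a,b],V)}$ via this isometric bijection. The main obstacle will be the vector-valued bounded-variation estimate, where scalar signs must be replaced by a two-step Hahn-Banach selection (unit $v_j \in V$ to approximate the $\STAR{V}$-norms of the increments, and unit scalars $\alpha_j$ to align the resulting pairings); a secondary subtlety, absent in the scalar proof, is the possible ``Dirac mass'' $\mu_a$ that $\tilde{\STAR{f}}$ places on $\chi_{\{a\}}V$, which must be absorbed into $\eta$ while maintaining $\eta(a) = 0$.
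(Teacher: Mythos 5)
The paper does not actually prove \cref{thm:integration:2}: it is quoted as a classical result and attributed to Gowurin \cite[\S 6]{Gowurin1936}, so there is no internal proof to compare yours against. Your argument is the standard vector-valued adaptation of the Hahn--Banach proof of Riesz's theorem, and in outline it is essentially the argument in the cited literature: extend $\STAR{f}$ to the bounded functions, read off $\eta_0(t)$ from the indicators $\chi_{(a,t]}v$, control the total variation with norm-one step functions, normalize, and recover the representation by uniform approximation of continuous $f$ by step functions. The skeleton is sound, and the uniqueness and completeness arguments are correct (the latter using \cref{eq:integration:7} for the reverse norm inequality and transporting completeness from $\STAR{C([a,b],V)}$).

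Two points deserve attention. First, the claim that $\PAIR{\phi_n}{\tilde{\STAR{f}}}$ \emph{equals} $S(f,P_n,d\eta)$ is false as stated once $\eta_0$ has been replaced by its right-continuous regularization: at a partition point $\sigma_j$ where $\eta_0$ fails to be right-continuous one has $\eta(\sigma_j) = \lim_{s \downarrow \sigma_j}\eta_0(s) + \mu_a \neq \eta_0(\sigma_j) + \mu_a$, so the two expressions differ by the jump terms $\PAIR{f(\tau_j^n) - f(\tau_{j+1}^n)}{\eta_0(\sigma_j^n+) - \eta_0(\sigma_j^n)}$. The standard repair is to note (via \cref{cor:integration:2}, or directly from the monotonicity of $V_a(\eta_0)$) that $\eta_0$ has at most countably many discontinuities and to choose the interior partition points of $P_n$ among its continuity points; alternatively, estimate the discrepancy using the uniform continuity of $f$ and the summability of the jumps. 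Second, a cosmetic remark: selecting the $v_j$ in the variation estimate needs no Hahn--Banach, since the norm of an increment in $\STAR{V}$ is by definition a supremum over the unit ball of $V$; Hahn--Banach enters only in the extension of $\STAR{f}$ to $B([a,b],V)$. Relatedly, when verifying that the two normalizations preserve the bound $V_a(\eta)(b) \le \|\STAR{f}\|$, take the test function on the first subinterval to be $\alpha_1\chi_{[a,\sigma_1]}v_1$ (closed at $a$), so that the atom $\mu_a$ is absorbed into the first increment rather than added on top of the variation of $\eta_0$.
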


We conclude this section with a few facts that are helpful for the manipulation of RS integrals. The proof of the first result follows readily from the definitions.
\begin{proposition}
  \label{prop:integration:5}
  Let $f : [a,b] \to \KK$, $v \in V$ and $\eta : [a,b] \to W$. Suppose $f \in S(\eta)$ for the bilinearity between $\KK$ and $W$. Then $f v \in S(\eta)$ for the bilinearity between $V$ and $W$ and 
  \begin{displaymath}
    \int_a^b{f v \,d\eta} = v\int_a^b{f\,d\eta}.
  \end{displaymath}
\end{proposition}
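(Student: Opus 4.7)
The plan is to verify the identity directly at the level of Riemann-Stieltjes sums and then pass to the limit using continuity of the bilinear product. This is where the proof will ultimately reduce to routine manipulations, so I do not foresee a real obstacle; the only thing to be careful about is tracking which bilinear pairing is in use at each step.

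First, I would unfold the definitions. Fix an arbitrary tagged partition $P = ([\sigma_{j-1},\sigma_j],\tau_j)_{j=1}^N$ of $[a,b]$. The Riemann-Stieltjes sum for $fv$ against $\eta$ (using the $V \times W \to Z$ bilinearity) is
\[
  S(fv,P,d\eta) = \sum_{j=1}^N (f(\tau_j)v) \cdot (\eta(\sigma_j) - \eta(\sigma_{j-1})),
\]
while the sum for $f$ against $\eta$ (using the $\KK \times W \to W$ bilinearity) is the $W$-valued quantity
\[
  S(f,P,d\eta) = \sum_{j=1}^N f(\tau_j)(\eta(\sigma_j) - \eta(\sigma_{j-1})).
\]

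Next I would exploit $\KK$-bilinearity of the product $V \times W \to Z$: for any scalar $\alpha \in \KK$, $v \in V$, $w \in W$ we have $(\alpha v) \cdot w = v \cdot (\alpha w)$. Applying this termwise and using linearity of $w \mapsto v \cdot w$ gives the key identity
\[
  S(fv,P,d\eta) = \sum_{j=1}^N v \cdot \bigl(f(\tau_j)(\eta(\sigma_j) - \eta(\sigma_{j-1}))\bigr) = v \cdot S(f,P,d\eta),
\]
valid for every tagged partition $P$.

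Finally I would pass to the limit as $|P| \to 0$. By continuity of the bilinear product, there is a constant $C > 0$ such that $\|v \cdot w\|_Z \le C\|v\|_V\|w\|_W$ for all $v,w$. Given $\EPS > 0$, the hypothesis $f \in S(\eta)$ yields $\delta > 0$ such that $|P| \le \delta$ implies
\[
  \Bigl\|S(f,P,d\eta) - \int_a^b{f\,d\eta}\Bigr\|_W \le \frac{\EPS}{C\|v\|_V + 1}.
\]
Combining with the identity above and the continuity estimate gives
\[
  \Bigl\|S(fv,P,d\eta) - v \cdot \int_a^b{f\,d\eta}\Bigr\|_Z \le \EPS
\]
for all such $P$. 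By \cref{def:integration:1} this means $fv \in S(\eta)$ with $\int_a^b fv\,d\eta = v \int_a^b f\,d\eta$, completing the proof. (The case $v = 0$ is trivial and is handled separately if one dislikes the denominator $C\|v\|_V + 1$.)
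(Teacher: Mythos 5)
Your proof is correct and is precisely the routine verification from the definitions that the paper itself invokes (the paper omits the details, stating only that the result ``follows readily from the definitions''). The termwise identity $S(fv,P,d\eta) = v\cdot S(f,P,d\eta)$ together with the continuity bound $\|v\cdot w\|\le C\|v\|\,\|w\|$ is exactly the intended argument, and your handling of the limit (including the $v=0$ case via the $+1$ in the denominator) is sound.
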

\begin{proposition}[Integration by parts]
  \label{prop:integration:1}
  Let $f : [a,b] \to V$ and $\eta : [a,b] \to W$. If $f \in S(\eta)$ then $\eta \in S(f)$ and
  \begin{equation}
    \label{eq:integration:2}
    \int_a^b{f\,d\eta} + \int_a^b{df\,\eta} = f(t)\cdot\eta(t)\Bigr|_{t=a}^{t=b}.
  \end{equation}
\end{proposition}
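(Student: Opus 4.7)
The plan is a direct generalization of the classical Abel-summation argument to the bilinear setting. The whole proof is really an algebraic identity followed by a mesh-refinement remark, so I expect no genuine obstacle beyond verifying that the bilinear product plays the same bookkeeping role as scalar multiplication does in the scalar case.

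First I would fix a tagged partition $P = ([\sigma_{j-1},\sigma_j],\tau_j)_{j=1}^N$ of $[a,b]$ and write the telescoping identity
\begin{align*}
f(b)\cdot\eta(b) - f(a)\cdot\eta(a) &= \sum_{j=1}^N \bigl[f(\sigma_j)\cdot\eta(\sigma_j) - f(\sigma_{j-1})\cdot\eta(\sigma_{j-1})\bigr],
\end{align*}
and then, using bilinearity, split each summand into three pieces by inserting the intermediate values $f(\sigma_j)\cdot\eta(\tau_j)$ and $f(\sigma_{j-1})\cdot\eta(\tau_j)$. Collecting terms this produces
\[
f(b)\cdot\eta(b) - f(a)\cdot\eta(a) = S(df,P,\eta) + \sum_{j=1}^N f(\sigma_{j-1})\cdot(\eta(\tau_j) - \eta(\sigma_{j-1})) + \sum_{j=1}^N f(\sigma_j)\cdot(\eta(\sigma_j) - \eta(\tau_j)).
\]

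Next I would recognize the last two sums as a single Riemann-Stieltjes sum $S(f,Q,d\eta)$ associated with the refined tagged partition $Q$ whose partition points are obtained from $P$ by inserting each $\tau_j$, and whose sample points are the $\sigma_j$'s (on the interval $[\sigma_{j-1},\tau_j]$ use the tag $\sigma_{j-1}$, on $[\tau_j,\sigma_j]$ use the tag $\sigma_j$; any degenerate sub-interval produced by the coincidences $\tau_j=\sigma_{j-1}$ or $\tau_j=\sigma_j$ simply contributes zero and can be discarded). Crucially $|Q| \le |P|$, since refining a partition can only decrease the mesh width. Therefore the identity reads
\begin{equation*}
S(df,P,\eta) = f(b)\cdot\eta(b) - f(a)\cdot\eta(a) - S(f,Q,d\eta),
\end{equation*}
with $|Q|\le|P|$.

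Finally, since $f\in S(\eta)$ by hypothesis, for any $\EPS>0$ we can choose $\delta>0$ such that $|Q|\le\delta$ forces $\|S(f,Q,d\eta) - \int_a^b f\,d\eta\| \le \EPS$, by \cref{def:integration:1}. Because $|Q|\le|P|$, the same $\delta$ works for $P$: whenever $|P|\le\delta$, the displayed identity gives
\[
\Bigl\| S(df,P,\eta) - \bigl(f(b)\cdot\eta(b) - f(a)\cdot\eta(a) - \textstyle\int_a^b f\,d\eta\bigr)\Bigr\| \le \EPS.
\]
This shows simultaneously that $\eta\in S(f)$ and that $\int_a^b df\,\eta$ equals $f(t)\cdot\eta(t)\big|_a^b - \int_a^b f\,d\eta$, which rearranges to the desired identity \cref{eq:integration:2}. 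The only mildly delicate point is the mesh comparison $|Q|\le|P|$ together with the cosmetic handling of possibly degenerate sub-intervals arising when a tag coincides with a partition endpoint; neither is a real obstacle.
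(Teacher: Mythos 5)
Your argument is correct, and it follows the same overall strategy as the paper's proof of \cref{prop:integration:1} --- an Abel-summation identity expressing $S(df,P,\eta)$ as $f(t)\cdot\eta(t)\bigr|_{t=a}^{t=b}$ minus a Riemann--Stieltjes sum $S(f,Q,d\eta)$ for an auxiliary tagged partition $Q$, followed by the mesh argument based on \cref{def:integration:1}. The difference lies in the choice of $Q$, and it is worth noting. The paper takes the ``dual'' partition: the sample points $\tau_j$ of $P$ become the partition points of $Q$ and the $\sigma_j$ become its tags. This forces a preliminary reduction (the paper's step \emph{2}) to partitions whose sample points are strictly increasing, so that the $\tau_j$ actually form a partition, and it yields only the mesh bound $|Q|\le 2|P|$ (hence the $|Q|\le 2|P|\le 4|P'|\le\delta$ bookkeeping in the paper's step \emph{3}). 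Your $Q$ is instead the \emph{refinement} of $P$ obtained by inserting each $\tau_j$ as a new partition point and tagging the two resulting subintervals by their outer endpoints $\sigma_{j-1}$ and $\sigma_j$; degenerate subintervals contribute zero and may be dropped. This gives $|Q|\le|P|$ immediately, eliminates the case analysis for coinciding sample points entirely, and lets the same $\delta$ from the hypothesis $f\in S(\eta)$ serve without any factor adjustments. Both proofs are valid; yours is the more economical implementation of the same idea, at the cost of an auxiliary partition with up to twice as many subintervals (which is harmless).
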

\begin{proof}
  The proof is adapted and slightly modified from \cite[Theorem H.5]{Bartle2001} which applied to the scalar case. It is a bit tedious, but since the proof of the scalar result in \cite[Theorem 1.8 in Appendix I]{Diekmann1995} is omitted (the suggested identity seems incorrect although it does contain a hint to the idea of the proof) and the result will be used quite often, we include a proof here.
  \begin{steps}
  \item
    Let $P = ([\sigma_{j-1},\sigma_j],\tau_j)_{j=1}^N$ be a tagged partition of $[a,b]$ with the additional property that
    \begin{equation}
      \label{eq:integration:3}
      \tau_j < \tau_{j+1}, \qquad \forall\,j = 1,\ldots,N-1.
    \end{equation}
    A direct calculation shows that
    \begin{align*}
      &S(df,P,\eta) - f(t)\eta(t)\Bigr|_{t=a}^{t=b} = \sum_{j=1}^N{(f(\sigma_j) - f(\sigma_{j-1}))\cdot\eta(\tau_j)} - f(t)\cdot\eta(t)\Bigr|_{t=a}^{t=b}\\
      &= -\Bigl[f(a)\cdot(\eta(\tau_1) - \eta(a)) + \sum_{j=1}^{N-1}{f(\sigma_j)\cdot(\eta(\tau_{j+1})- \eta(\tau_j))} + f(b)\cdot(\eta(b) - \eta(\tau_N))\Bigr].
    \end{align*}
    We observe that $\sigma_j \in [\tau_j,\tau_{j+1}]$ for $j = 1,\ldots,N-1$. We define a tagged partition $Q$ of $[a,b]$ such that the term in brackets equals the RS sum $S(f,Q,d\eta)$. Let $\tau_0 \DEF a$ and $\tau_{N+1} \DEF b$ and set
    \begin{displaymath}
      Q \DEF 
      \begin{cases}
        ([\tau_j,\tau_{j+1}],\sigma_j)_{j=0}^N &\text{if } a < \tau_1 \text{ and } \tau_N < b,\\
        ([\tau_j,\tau_{j+1}],\sigma_j)_{j=1}^{N-1} &\text{if } a = \tau_1 \text{ and } \tau_N = b,\\
        ([\tau_j,\tau_{j+1}],\sigma_j)_{j=1}^N & \text{if } a = \tau_1 \text{ and } \tau_N < b,\\
        ([\tau_j,\tau_{j+1}],\sigma_j)_{j=0}^{N-1} & \text{if } a < \tau_1 \text{ and } \tau_N = b. 
      \end{cases}
    \end{displaymath}
    Then indeed
    \begin{equation}
      \label{eq:integration:1}
      S(df,P,\eta) - f(t)\cdot\eta(t)\Bigr|_{t=a}^{t=b} = -S(f,Q,d\eta),
    \end{equation}
    and moreover $|Q| \le 2 |P|$.
  \item
    If $P' = ([\sigma_{j-1}',\sigma_j'],\tau_j')_{j=1}^M$ is a tagged partition of $[a,b]$ and $\tau_k' = \tau_{k+1}'$ for some $1 \le k \le M - 1$, then one easily verifies that
    \begin{displaymath}
      S(df,P',\eta) = \sum_{\substack{j=1\\j\neq k,k+1}}^M{(f(\sigma_j') - f(\sigma_{j-1}'))\cdot\eta(\tau_j')} + (f(\sigma_{k+1}') - f(\sigma_{k-1}'))\cdot\eta(\tau_k'),
    \end{displaymath}
    which implies the existence of a tagged partition $P$ of $[a,b]$ such that \cref{eq:integration:3} holds for its sample points, $S(df,P',\eta) = S(df,P,\eta)$ and $|P| \le 2|P'|$.
  \item
    Let $\EPS > 0$ be given and let $\delta > 0$ be provided by the definition of RS integrability of $f$ with respect to $\eta$. Let $P'$ be a tagged partition with $|P'| \le \frac{\delta}{4}$. Let $P$ be the tagged partition derived from $P'$ as in the previous step and let $Q$ be such that \cref{eq:integration:1} holds. Then $|Q| \le 2|P| \le 4|P'| \le \delta$ and by \cref{eq:integration:1} we have 
    \begin{displaymath}
      \Bigl\|S(df,P',\eta) - \bigl[f(t)\eta(t)\Bigr|_{t=a}^{t=b} - \int_a^b{f\,d\eta}\bigr] \Bigr\| = \Bigl\|  \int_a^b{f\,d\eta} - S(f,Q,d\eta) \Bigr\| \le \EPS.
    \end{displaymath}
    This proves that $\eta \in S(f)$ and \cref{eq:integration:2} holds. \hfill \qedhere
  \end{steps}
\end{proof}
For the following result we remark that when the integrand is vector valued, Riemann integrability does \emph{not} imply Bochner integrability \cite[Example 1.9.7]{Arendt2011} since Riemann integrable functions need not be strongly measurable. This is in contrast with the scalar case.
\begin{proposition}
  \label{prop:integration:6}
  Every function $\eta : [a,b] \to W$ of bounded variation is Riemann integrable as well as Bochner integrable.
\end{proposition}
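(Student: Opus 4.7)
The plan is to exploit two general properties of $\eta \in \BV([a,b], W)$ already established earlier in this appendix: by \cref{cor:integration:1} the function $\eta$ is bounded (say $\|\eta(t)\| \le M$ for every $t \in [a,b]$), and by \cref{cor:integration:2} its set $D$ of discontinuities is at most countable, so $\eta$ is continuous on the set $[a,b] \setminus D$ of full Lebesgue measure.

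For Bochner integrability I would approximate $\eta$ almost everywhere by a sequence of simple functions. For each $n \in \NN$ split $[a,b]$ into $n$ subintervals of equal length and in each one pick a continuity point $t_{k,n}$ of $\eta$; this is possible because $D$ is countable. Defining the simple function $\eta_n(t) \DEF \eta(t_{k,n})$ on the $k$-th subinterval, continuity of $\eta$ at every $t \in [a,b] \setminus D$ forces $\eta_n(t) \to \eta(t)$, so $\eta_n \to \eta$ almost everywhere and $\eta$ is strongly measurable. Since $\|\eta\| \le M$ is Lebesgue integrable on $[a,b]$, the Bochner integrability criterion \cite[Theorem 1.1.4]{Arendt2011} then applies.

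For Riemann integrability (in the sense of \cref{rem:integration:1}, with $\eta$ itself as the integrand and weight $t \mapsto t$) I would verify the Cauchy criterion of \cref{lem:integration:5}. The quantitative input is that for every $\gamma > 0$ the set $D_\gamma \DEF \{t \in [a,b] : \omega(\eta, t) \ge \gamma\}$, where $\omega(\eta, t) \DEF \lim_{r \downarrow 0}{\sup\{\|\eta(s) - \eta(s')\| : s, s' \in (t - r, t + r) \cap [a,b]\}}$ is the oscillation at $t$, is finite, with $|D_\gamma| \le 2 V_a(\eta)(b)/\gamma$: indeed every $t \in D_\gamma$ admits in any neighborhood two points whose $\eta$-values differ by more than $\gamma/2$, and disjoint such pairs can be assembled into a single variation sum bounded by $V_a(\eta)(b)$. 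Given $\EPS > 0$, set $\gamma \DEF \EPS/(2(b - a))$, cover the finite set $D_\gamma$ by disjoint open intervals of total length less than $\EPS/(4M)$, and on the compact complement use a Lebesgue-number argument to extract a mesh threshold $\delta > 0$ for which the oscillation of $\eta$ over any subinterval of length at most $\delta$ stays below $\gamma$. For any two tagged partitions of $[a,b]$ with mesh at most $\delta$ the resulting Riemann sums then differ by at most $\EPS$: contributions from subintervals meeting the `bad' cover are bounded by $2M$ times its total length, while contributions from the `good' subintervals change by at most $\gamma \cdot (b - a)$ when sample points are shifted. Passing to a common refinement completes the verification.

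The main obstacle, or rather the point that needs careful bookkeeping, lies in the Riemann step, where one must calibrate $\delta$ simultaneously against the minimum gap between the covering intervals of $D_\gamma$ and against the uniform oscillation bound on their complement. The Bochner half is essentially a routine Pettis-type approximation argument once the countability of $D$ is available.
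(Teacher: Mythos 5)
Your proof is correct, but it takes a genuinely different route from the paper's on both halves. For Riemann integrability the paper simply takes $f(t) = t$, notes that $f \in S(\eta)$ by \cref{thm:integration:1} since $\eta$ is of bounded variation, and then invokes the integration-by-parts result \cref{prop:integration:1} to conclude $\eta \in S(f)$, which \emph{is} Riemann integrability of $\eta$ in the sense of \cref{rem:integration:1}; this is a two-line argument that reuses the RS machinery, whereas you prove the sufficiency half of the Lebesgue criterion directly for vector-valued integrands via the oscillation sets $D_\gamma$ and the Cauchy criterion of \cref{lem:integration:5}. Your quantitative bound $\#D_\gamma \le 2V_a(\eta)(b)/\gamma$ and the good/bad interval bookkeeping are sound (the Lebesgue-number step should be phrased so that any interval of length at most $\delta$ \emph{meeting} the compact complement lies in one of the covering neighborhoods, but that is routine), so the argument goes through at the cost of considerably more work. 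For Bochner integrability the paper verifies the hypotheses of Pettis' measurability theorem: weak measurability because $\STAR{w} \circ \eta$ is scalar BV hence measurable, and almost separable range because the discontinuity set is countable by \cref{cor:integration:2}; you instead build an explicit sequence of simple functions by sampling at continuity points of equispaced subintervals and converge to $\eta$ almost everywhere, which establishes strong measurability straight from the definition without Pettis. Your version is more elementary and constructive; the paper's is shorter given the tools already in the appendix. Both then finish with boundedness (\cref{cor:integration:1}) and Bochner's theorem.
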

\begin{proof}
  For Riemann integrability we apply \cref{prop:integration:1} with $f(t) = t$. This gives $\eta \in S(f)$ so $\eta$ is Riemann integrable over $[a,b]$.
  
  For Bochner integrability we use Pettis' measurability theorem \cite[Theorem 1.1.1]{Arendt2011} to verify that $\eta$ is strongly measurable, also see \cite[Lemma 2.1]{Greiner1992} for what inspired the current proof. Once this is done, then \cref{cor:integration:1} and Bochner's theorem \cite[Theorem 1.1.4]{Arendt2011} together imply that $\eta$ is Bochner integrable.

  First we verify that $\eta$ is almost separably valued. Let $D$ be the set of points in $[a,b]$ where $\eta$ is not continuous. Then $D$ is countable by \cref{cor:integration:2}, so $D$ has Lebesgue measure zero. Also, $[a,b] \setminus D$ is separable, so its image under the continuous restriction of $\eta$ is separable in $W$. This means that $\eta$ is almost separably valued.

  It remains to check that $\eta$ is weakly measurable. It follows directly from the definition of the variation function \cref{eq:integration:V} that for every $\STAR{w} \in \STAR{W}$ the function $\STAR{w} \circ \eta : [a,b] \to \KK$ is of bounded variation. Splitting $\eta$ into real and imaginary parts (in case $\KK = \CC$) and then writing each part as the difference of two nondecreasing functions on $[a,b]$, we see that $\STAR{w} \circ \eta$ is Lebesgue measurable on $[a,b]$, so $\eta$ itself is indeed weakly measurable. Pettis' measurability theorem then implies that $\eta$ is strongly measurable.
\end{proof}
\begin{proposition}
  \label{prop:integration:3}
  Let $f : [a,b] \to V$ and $g: [a,b] \to W$ be Riemann integrable over $[a,b]$. Then $f \cdot g : [a,b] \to Z$ is Riemann integrable over $[a,b]$.
\end{proposition}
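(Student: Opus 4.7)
The plan is to verify the Cauchy criterion of \cref{lem:integration:5} for $f \cdot g$ with $\eta(t) = t$. As a preliminary I would show that Riemann integrability forces boundedness: picking any tagged partition of mesh at most some $\delta_0 > 0$ and then varying a single tag at a time shows that $M_f \DEF \sup_{[a,b]}\|f\|$ and $M_g \DEF \sup_{[a,b]}\|g\|$ are finite. Let $C$ denote the bilinearity constant, so $\|v \cdot w\|_Z \le C\|v\|_V \|w\|_W$ for all $v \in V$ and $w \in W$.

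Given $\EPS > 0$ and tagged partitions $P, Q$ of sufficiently small mesh, I would pass to their common interval refinement $R$ with breakpoints $a = \pi_0 < \pi_1 < \cdots < \pi_N = b$ and retag it inheriting the tags locally from $P$ and $Q$ to obtain $R_P$ and $R_Q$. A direct regrouping gives $S(f \cdot g, P) = S(f \cdot g, R_P)$ and $S(f \cdot g, Q) = S(f \cdot g, R_Q)$, so it suffices to compare sums on the same partition with different taggings. Writing $\tau_i$ and $\tau_i'$ for the respective $R_P, R_Q$ tags, the bilinear telescoping identity $v \cdot w - v' \cdot w' = (v - v') \cdot w + v' \cdot (w - w')$ yields
\[
  S(f \cdot g, R_P) - S(f \cdot g, R_Q) = A + B,
\]
with
\[
  A \DEF \sum_{i=1}^{N} (f(\tau_i) - f(\tau_i')) \cdot g(\tau_i)(\pi_i - \pi_{i-1}),
  \qquad
  B \DEF \sum_{i=1}^{N} f(\tau_i') \cdot (g(\tau_i) - g(\tau_i'))(\pi_i - \pi_{i-1}).
\]

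The main obstacle is bounding $\|A\|$ and $\|B\|$. The crude pointwise estimate $\|A\| \le C M_g \sum_i \|f(\tau_i) - f(\tau_i')\|(\pi_i - \pi_{i-1})$ demands control of a \emph{sum of norms} of tag-differences, whereas the Cauchy criterion for $f$ delivers only the norm of the \emph{signed sum} $\|\sum_i (f(\tau_i) - f(\tau_i'))(\pi_i - \pi_{i-1})\|$. In the scalar case Darboux's theory bridges this gap via the oscillation-weighted mesh sum $\sum_i \omega(f, I_i)(\pi_i - \pi_{i-1})$, but in the Banach-space-valued setting the equivalence with Riemann integrability is no longer automatic. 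I would close the gap by approximating the step-function weight $i \mapsto g(\tau_i)$ by one that is constant on each subinterval of the coarser partition $P$: the replacement error is estimated using the boundedness of $f$ together with an oscillation bound for $g$ on the $P$-subintervals, while the main term, being a finite $V$-linear combination of signed $f$-Riemann-sum differences, is small by the Cauchy criterion for $f$ applied piecewise. A symmetric argument handles $B$, and together they yield the Cauchy estimate for $f \cdot g$. This type of bilinear oscillation argument is the delicate point; it is carried out in detail by Gowurin \cite{Gowurin1936}, to whom the excerpt refers for the theory of bilinear Riemann--Stieltjes integration.
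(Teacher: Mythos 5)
The paper offers no proof of \cref{prop:integration:3} at all --- the proof is simply omitted --- so there is nothing to compare your argument against; it has to be judged on its own terms. Your preparation is sound: the boundedness argument via varying a single tag, the passage to a common interval refinement with inherited tags, and the bilinear telescoping into the two sums $A$ and $B$ are all correct, and you have put your finger on exactly the right difficulty. But the resolution you sketch does not close the gap. To control the ``replacement error'' you invoke ``an oscillation bound for $g$ on the $P$-subintervals'', i.e.\ smallness of $\sum_i \omega(g,I_i)\,|I_i|$ for fine partitions, where $\omega(g,I)$ denotes the oscillation of $g$ on $I$. That is precisely the Darboux-type criterion whose failure in the Banach-valued setting you concede, two sentences earlier, for $f$; it is equally unavailable for $g$. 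Riemann integrability in the sense of \cref{def:integration:1} and \cref{rem:integration:1} controls only norms of \emph{signed sums} of tag-differences, never sums of \emph{norms} of tag-differences, and no regrouping recovers the latter from the former. (The piecewise application of the Cauchy criterion in your ``main term'' is also delicate, since the number of $P$-subintervals over which you sum the resulting errors is unbounded as $|P| \to 0$.)

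Moreover the gap cannot be repaired, because the statement is false in the stated generality. Take $V = W = \ell^2$ over $\RR$, $Z = \RR$ and $v \cdot w \DEF \sum_n v_n w_n$; let $D = \{d_1, d_2, \dots\}$ be a countable dense subset of $[a,b]$ and set $f(d_n) \DEF e_n$ (the $n$-th unit vector) and $f(t) \DEF 0$ for $t \notin D$. For any tagged partition $P$ of mesh at most $\delta$, each $e_n$ occurs in $S(f,P,dt)$ with a nonnegative coefficient at most $2\delta$, and these coefficients sum to at most $b-a$, so $\|S(f,P,dt)\|_{\ell^2}^2 \le 2\delta(b-a)$; hence $f$ is Riemann integrable with integral $0$. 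Yet $(f \cdot f)(t) = \|f(t)\|_{\ell^2}^2 = \chi_D(t)$ is the Dirichlet function, which is not Riemann integrable. The conclusion does hold --- and your scheme does go through --- under the additional hypothesis that one of the two factors is continuous (or regulated): then the oscillation of that factor is uniformly small on fine partitions, so the replacement error is controlled, and the main term is handled by the Cauchy criterion for the other factor on the finitely many subintervals of a partition fixed in advance. That weaker statement is all the paper actually needs, since \cref{prop:integration:3} is invoked only in \cref{prop:integration:2}, for the product $f' \cdot \eta$ with $f \in C^1([a,b],V)$. I would recommend proving it in that form rather than appealing to \cite{Gowurin1936}, which does not establish the unrestricted claim.
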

%
Next we formulate a counterpart to \cite[Theorem 1.9 in Appendix I]{Diekmann1995} which allows us to evaluate RS integrals as Riemann integrals. For similar results in case either $V$ or $W$ is the scalar field, see \cite[\S 1.9]{Arendt2011}. We observe that by \cref{prop:integration:6} the following proposition applies in particular when $\eta \in \BV([a,b],W)$.
\begin{proposition}
  \label{prop:integration:2}
  Let $f \in C^1([a,b],V)$ and let $\eta : [a,b] \to W$ be Riemann integrable. Then $\eta \in S(f)$ and
  \begin{equation}
    \label{eq:integration:4}
    \int_a^b{df\,\eta} = \int_a^b{f'(t)\cdot \eta(t)\,dt},
  \end{equation}
  where the right-hand side is a Riemann integral.
\end{proposition}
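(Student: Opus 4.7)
The plan is to reduce the Riemann--Stieltjes sum $S(df,P,\eta)$ to an ordinary Riemann sum for the continuous bilinear pairing of $f'$ and $\eta$, and then invoke \cref{prop:integration:3} to conclude.

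First, since $f'$ is continuous on the compact interval $[a,b]$, it is uniformly continuous with some modulus $\omega$ satisfying $\omega(\delta) \to 0$ as $\delta \downarrow 0$. For any tagged partition $P = ([\sigma_{j-1},\sigma_j],\tau_j)_{j=1}^N$ of $[a,b]$ the fundamental theorem of calculus gives $f(\sigma_j) - f(\sigma_{j-1}) = \int_{\sigma_{j-1}}^{\sigma_j} f'(s)\,ds$, whence
\[
  \bigl\| f(\sigma_j) - f(\sigma_{j-1}) - f'(\tau_j)(\sigma_j - \sigma_{j-1}) \bigr\| \le \omega(|P|)\,(\sigma_j - \sigma_{j-1}).
\]
Pairing this inequality with $\eta(\tau_j)$ through the continuous bilinear product (with norm bound $C$), summing over $j$, and using that $\eta$ is Riemann integrable and hence bounded, say $\|\eta(t)\| \le K$ for all $t \in [a,b]$, yields
\[
  \Bigl\| S(df,P,\eta) - \sum_{j=1}^N f'(\tau_j)\cdot\eta(\tau_j)\,(\sigma_j - \sigma_{j-1}) \Bigr\| \le CK(b-a)\,\omega(|P|).
\]

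Second, by \cref{prop:integration:3} the function $t \mapsto f'(t)\cdot\eta(t)$ is Riemann integrable on $[a,b]$ (in the sense of \cref{rem:integration:1}, with values in $Z$), so the Riemann sums $\sum_j f'(\tau_j)\cdot\eta(\tau_j)(\sigma_j - \sigma_{j-1})$ converge to $\int_a^b f'(t)\cdot\eta(t)\,dt$ as $|P| \downarrow 0$. Combining this with the preceding estimate, given any $\EPS > 0$ one can choose $\delta > 0$ so small that $CK(b-a)\omega(\delta) < \EPS/2$ and simultaneously the Riemann-sum error is below $\EPS/2$ whenever $|P| \le \delta$; then $\|S(df,P,\eta) - \int_a^b f'(t)\cdot\eta(t)\,dt\| \le \EPS$. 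By \cref{def:integration:1} this proves $\eta \in S(f)$ and establishes \cref{eq:integration:4}.

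The only mildly delicate point is the boundedness of $\eta$: a Riemann integrable function $\eta : [a,b] \to W$ is automatically bounded, since otherwise the sample points in any partition could be chosen so that $\|\eta(\tau_j)\|$ is arbitrarily large in some subinterval, violating the Cauchy criterion (\cref{lem:integration:5}) for the Riemann integral. Apart from this, each step reduces to uniform continuity of $f'$, continuity of the bilinear product, and the definition of ordinary Riemann integrability.
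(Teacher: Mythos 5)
Your proof is correct and follows essentially the same route as the paper's: both compare $S(df,P,\eta)$ with the Riemann sum $\sum_j f'(\tau_j)\cdot\eta(\tau_j)(\sigma_j-\sigma_{j-1})$ via the identity $f(\sigma_j)-f(\sigma_{j-1})=\int_{\sigma_{j-1}}^{\sigma_j}f'(s)\,ds$ and uniform continuity of $f'$, and both invoke \cref{prop:integration:3} for the Riemann integrability of $f'\cdot\eta$. Your explicit justification that a Riemann integrable $\eta$ is bounded is a small but welcome addition, since the paper uses $\|\eta\|_{\infty}$ without comment.
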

\begin{proof}
  \Cref{prop:integration:3} implies that $f' \cdot \eta$ is Riemann integrable. Let $P = ([\sigma_{j-1},\sigma_j],\tau_j)_{j=1}^N$ be a tagged partition of $[a,b]$. Then
\begin{align*}
  \Bigl\|S(df,P,\eta) - \int_a^b{f'(t)\cdot\eta(t)\,dt}\Bigr\| &\le \Bigl\|S(df,P,\eta) - \sum_{j=1}^N{f'(\tau_j)\cdot \eta(\tau_j) (\sigma_j - \sigma_{j-1})}\Bigr\|\\
  &+ \Bigl\|\sum_{j=1}^N{f'(\tau_j)\cdot \eta(\tau_j) (\sigma_j - \sigma_{j-1})} - \int_a^b{f'(t)\cdot\eta(t)\,dt} \Bigr\|,
\end{align*}
and we observe that the second term in the right-hand side tends to zero as $|P| \to 0$. Let $\EPS > 0$ be arbitrary. By the uniform continuity of $f'$ on $[a,b]$ there exists $\delta > 0$ such that $\|f'(s) - f'(t)\| \le \EPS$ whenever $s,t \in [a,b]$ and $|s - t| \le \delta$. Then $|P| \le \delta$ implies that the first term in the right-hand side equals
\begin{align*}
  \Bigl\| \sum_{j=1}^N(f(\sigma_i) - f(\sigma_{i-1}))\cdot\eta(\tau_i) &- \sum_{j=1}^N{f'(\tau_j)\cdot \eta(\tau_j) (\sigma_j - \sigma_{j-1})}\Bigr\|\\
  &= \Bigl\|\sum_{j=1}^N{\int_{\sigma_{j-1}}^{\sigma_j}{(f'(s) - f'(\tau_j))\cdot\eta(\tau_j)\,ds}  } \Bigr\|\\
  &\le \EPS \|\eta\|_{\infty}(b - a).
\end{align*}
Now take the limit $\EPS \downarrow 0$.
\end{proof}
We also have the following variation. In contrast with \cref{prop:integration:2} now the integrator is only assumed to have a Bochner integrable derivative \cite[Proposition 1.2.2]{Arendt2011} but the {\it integrand} is supposed to be continuous.
\begin{proposition}
  \label{prop:integration:4}
  Let $f \in C([a,b],V)$ and $h \in \LP{1}([a,b],W)$ and define $\eta : [a,b] \to W$ by
  \begin{displaymath}
    \eta(t) \DEF \int_a^t{h(s)\,ds}, \qquad \forall\,t \in [a,b].
  \end{displaymath}
  Then $f \in S(\eta)$ and
  \begin{equation}
    \label{eq:integration:10}
    \int_a^b{f\,d\eta} = \int_a^b{f(s)\cdot h(s)\,ds},
  \end{equation}
  where the right-hand side is a Bochner integral.
\end{proposition}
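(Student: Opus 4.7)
My plan is to use the standard tagged-partition argument, exploiting that the integrator $\eta$ is absolutely continuous in a suitable vector-valued sense.

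First I would establish Riemann-Stieltjes integrability of $f$ against $\eta$. By \cref{lem:duality:1} (applied with $w = 0$ so that $f(t) = \int_a^t h(s)\,ds$ in that lemma's notation) we have $\eta \in \BV([a,b],W)$ with $V_a(\eta)(b) = \|h\|_{\LP{1}}$. Since $f \in C([a,b],V)$, \cref{thm:integration:1} yields $f \in S(\eta)$ together with the norm estimate. It remains to identify the value of the RS integral with the Bochner integral on the right-hand side.

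Next I would write out a Riemann-Stieltjes sum for a tagged partition $P = ([\sigma_{j-1},\sigma_j],\tau_j)_{j=1}^N$, use the definition of $\eta$, and pull $f(\tau_j)$ inside the Bochner integral via the continuity of the bilinear product. Concretely,
\begin{align*}
S(f,P,d\eta) &= \sum_{j=1}^N f(\tau_j) \cdot \int_{\sigma_{j-1}}^{\sigma_j} h(s)\,ds = \sum_{j=1}^N \int_{\sigma_{j-1}}^{\sigma_j} f(\tau_j) \cdot h(s)\,ds.
\end{align*}
Subtracting the Bochner integral $\int_a^b f(s) \cdot h(s)\,ds = \sum_j \int_{\sigma_{j-1}}^{\sigma_j} f(s) \cdot h(s)\,ds$ (whose existence requires a brief check: $f \cdot h$ is strongly measurable by joint continuity of the bilinear map applied to the continuous $f$ and the strongly measurable $h$, and $\|f(s) \cdot h(s)\| \le C\|f\|_\infty \|h(s)\|$ where $C$ is the norm of the bilinear map, hence it is Bochner integrable), the difference is
\[
\sum_{j=1}^N \int_{\sigma_{j-1}}^{\sigma_j} (f(\tau_j) - f(s)) \cdot h(s)\,ds.
\]

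Finally I would estimate this difference using uniform continuity of $f$ on $[a,b]$: given $\EPS > 0$, choose $\delta > 0$ so that $|s-t| \le \delta$ forces $\|f(s) - f(t)\| \le \EPS$. Then for any tagged partition $P$ with $|P| \le \delta$, the norm of the difference is bounded by $C \EPS \int_a^b \|h(s)\|\,ds = C\EPS \|h\|_{\LP{1}}$. Letting $|P| \to 0$ (and then $\EPS \downarrow 0$) shows that $S(f,P,d\eta)$ converges to $\int_a^b f(s)\cdot h(s)\,ds$, which by uniqueness of the limit equals $\int_a^b f\,d\eta$.

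The only subtle step is the identity $f(\tau_j) \cdot \int_{\sigma_{j-1}}^{\sigma_j} h(s)\,ds = \int_{\sigma_{j-1}}^{\sigma_j} f(\tau_j)\cdot h(s)\,ds$, which I expect to be the main point to justify carefully: it follows because $w \mapsto f(\tau_j) \cdot w$ is a bounded linear map from $W$ to $Z$, and bounded linear maps commute with the Bochner integral (a standard fact, cf.\ \cite[\S 1.1]{Arendt2011}). Everything else is a routine uniform-continuity estimate.
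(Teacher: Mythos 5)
Your proposal is correct and follows essentially the same route as the paper's proof: first deduce $\eta \in \BV([a,b],W)$ from \cref{lem:duality:1} and $f \in S(\eta)$ from \cref{thm:integration:1}, then compare the Riemann--Stieltjes sum with the Bochner integral of $f\cdot h$ over each subinterval and conclude by the uniform continuity of $f$. Your explicit justification of moving $f(\tau_j)\,\cdot$ through the Bochner integral, and your inclusion of the norm constant of the bilinear form in the final estimate, are minor refinements of details the paper leaves implicit.
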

\begin{proof}
  First we observe that $\eta \in \BV([a,b],W)$ by \cref{lem:duality:1}. Since $f$ is continuous, \cref{thm:integration:1} then implies that $f \in S(\eta)$. Also, since $f$ is bounded and measurable, it follows that $f\cdot h$ is Bochner integrable over $[a,b]$. 
  \par
  Given $\EPS > 0$ by the uniform continuity of $f$ on $[a,b]$ there exists $\delta > 0$ such that $\|f(s) - f(t)\| \le \EPS$ for all $s,t \in [a,b]$ with $|s - t| \le \delta$. Let $P = ([\sigma_{j-1},\sigma_j],\tau_j)_{j=1}^N$ be a tagged partition of $[a,b]$ with $|P| \le \delta$. Then
  \begin{align*}
    \Bigl\|S(f,P,d\eta) - \int_a^b{f(s)\cdot h(s)\,ds}\Bigr\| &= \Bigl\|\sum_{j=1}^N{f(\tau_j)\cdot(\eta(\sigma_j) - \eta(\sigma_{j-1}))} - \int_a^b{f(s)\cdot h(s)\,ds}\Bigr\|\\
    &= \Bigl\|\sum_{j=1}^N{\int_{\sigma_{j-1}}^{\sigma_j}{f(\tau_j)\cdot h(s)\,ds}} - \sum_{j=1}^N{\int_{\sigma_{j-1}}^{\sigma_j}{f(s)\cdot h(s)\,ds}}\Bigr\|\\
    &\le \sum_{j=1}^N{\int_{\sigma_{j-1}}^{\sigma_j}{\|f(\tau_j) - f(s)\| \|h(s)\|\,ds}}\\
    &\le \EPS \int_a^b{\|h(s)\|\,ds},
  \end{align*}
  and the result follows by letting $\EPS \downarrow 0$.
\end{proof}

In the main text we often encounter the special case $W = \STAR{V}$, $Z = \KK$ and $v \cdot \STAR{v} \DEF \PAIR{v}{\STAR{v}}$. We remark that in this setting the right-hand side of \cref{eq:integration:10} reduces to an ordinary (i.e. scalar valued) Lebesgue integral. 


\bibliographystyle{hplain}
\bibliography{bibliography}

\end{document}